\setlist[enumerate]{leftmargin=.5in}
\setlist[itemize]{leftmargin=.5in}
\crefname{hypothesis}{Hypothesis}{Hypotheses}
\newtheorem*{notation*}{Notation}
\newcommand{\E}{\mathbb{E}}
\title{Bilevel Learning via Inexact Stochastic Gradient Descent \thanks{Submitted to the editors DATE. 
\funding{MSS acknowledges support from the EPSRC Centre for Doctoral Training in Statistical Applied Mathematics at Bath (SAMBa), under the project EP/S022945/1. MJE acknowledges support from the EPSRC (EP/T026693/1, EP/V026259/1, EP/Y037286/1) and the European Union Horizon 2020 research and innovation programme under the Marie Skodowska-Curie grant agreement REMODEL.). LR is supported by the Australian Research Council Discovery Early Career Award DE240100006. SM acknowledges support from the ANRF under the Prime Minister Early Career Research Grant program (ANRF/ECRG/2024/001178/ENS).}}} 
\author{Mohammad~Sadegh~Salehi\thanks{Independent Researcher, Bath, UK (\email{sadeghsalehi1997@gmail.com}).}\and Subhadip~Mukherjee\thanks{Department of Electronics \& Electrical Communication Engineering, Indian Institute of Technology (IIT), Kharagpur, India (\email{smukherjee@ece.iitkgp.ac.in}).} \and Lindon~Roberts\thanks{School of Mathematics and Statistics, University of Melbourne, Parkville VIC 3010, Australia (\email{lindon.roberts@unimelb.edu.au)}.} \and Matthias~J.~Ehrhardt \thanks{Department of Mathematical Sciences, University of Bath, Bath, BA2 7AY, UK (\email{m.ehrhardt@bath.ac.uk})}.}
\begin{document}

\maketitle

\begin{abstract}
Bilevel optimization is a central tool in machine learning for high-dimensional hyperparameter tuning. Its applications are vast; for instance in imaging it can be used for learning data-adaptive regularizers and optimizing forward operators in variational regularization. These problems are large in many ways: a lot of data is usually available to train a large number of parameters, calling for stochastic gradient-based algorithms. However, due to the nature of the problem, exact gradients with respect to the parameters (so-called hypergradients) are not available. In fact, the precision of the hypergradient is usually linearly related to the cost, thus requiring algorithms which can solve the problem without requiring unnecessary precision.
However, the design of such algorithms is still not fully understood, especially in terms of how accuracy requirements and step size schedules influence both theoretical guarantees and practical performance. Existing approaches often introduce stochasticity at both the upper level (e.g., in sampling or mini-batch gradient estimates for the outer objective) and the lower level (e.g., in approximating the solution of the inner optimization problem), to improve generalization. At the same time, they typically enforce a fixed number of lower-level iterations, which does not align with the assumptions underlying asymptotic convergence analysis.
In this work, we advance the theoretical foundations of inexact stochastic bilevel optimization. We prove convergence and establish rates under both decaying accuracy and step size schedules, showing that, with optimal configurations, convergence occurs at an $\mathcal{O}(k^{-1/4})$ rate in expectation.
Experiments on image denoising and inpainting with convex ridge regularizers and input-convex networks confirm our analysis: decreasing step sizes improves stability, accuracy scheduling is more critical than step size strategy, and adaptive preconditioning (e.g., Adam) further boosts performance. These results help bridge the gap between theory and practice in bilevel optimization, providing convergence guarantees and practical guidance for large-scale imaging problems.
\end{abstract}

\begin{keywords}
Bilevel Learning, Bilevel Optimization, 
Stochastic Optimization,
Learning
Regularizers, Machine Learning, Variational Regularization, Imaging,
Inverse Problems. 
\end{keywords}

\begin{MSCcodes}
65K10, 90C25, 90C26, 90C06, 90C31, 94A08
\end{MSCcodes}

\newpage

\section{Introduction}
 \subsection{Problem motivation}
Bilevel optimization has become a versatile tool in machine learning and inverse problems, particularly in imaging applications where the quality of reconstruction depends critically on the choice of regularization parameters that encode the image prior. In variational regularization, the lower-level problem typically takes the form
$$\min_{x \in \mathbb{R}^n} \left \{h(x, \theta) \coloneqq D(Ax, y) + R_\theta(x)\right \},$$
where $D$ denotes a data fidelity term, $A$ is a forward operator, $y$ is the noisy measurement, and $R_\theta$ is a parametric regularizer. The upper-level problem then optimizes the parameter $\theta$ to minimize a supervised loss, yielding a nested optimization structure that can adapt regularizers to specific datasets. This framework has proven effective for learning data-adaptive priors in imaging inverse problems \cite{InputConvex,goujon2022CRR,goujon2023WCRR,hertrich2025}. It can also be used to optimize physical acquisition models, such as sampling strategies in MRI reconstruction \cite{Sherry,Valkonen_single}, and to design forward models in seismic imaging \cite{Downing_2024}, by parameterizing the forward operator $A$ with a learnable parameter $\theta$.

In this work, we consider the stochastic bilevel optimization problem
\begin{subequations}\label{upper_stochastic}
\begin{align}
    \min_\theta \mathbb{E}_{v \sim \mathcal{D}}\Big[f_v(\theta) &\coloneqq \frac{1}{m} \sum_{i=1}^{m} v_i f_i(\theta) = \frac{1}{m}\sum_{i=1}^{m}v_ig_i(\hat{x}_i(\theta)) \Big], \label{Upper_stochastic}\\
 s.t. \quad
         {\hat{x}_i(\theta)} &\coloneqq \arg\min_{x\in \mathbb{R}^n} h_i(x,\theta), \quad i = 1,2,\dots, m,  \label{lower_stochastic}  
\end{align}
\end{subequations}
where $v \in \mathbb{R}^m$ is a sampling vector with independent entries, e.g., $v_i \sim \mathrm{Binomial}(m, 1/m)$, and $\mathbb{E}[v_i] = 1$  for all  $i\in \{1,2,\dots,m\}$. The upper-level functions $g_i$ measure the quality of the lower-level solutions, e.g., $g_i(x) = \|x - x_i^*\|^2$ with $x_i^*$ denoting the desired solution. Note that in case we have access to the exact lower-level solution $\hat{x}_i(\theta)$ for all $i\in \{1,2, \dots,m \}$, $f_v$ and $\nabla f_v$ are the unbiased estimators of $f$ and $\nabla f$, respectively, i.e., $\mathbb{E}_{v \sim \mathcal{D}} [f_v(\theta)] = f(\theta)$ and $\mathbb{E}_{v \sim \mathcal{D}} [\nabla f_v(\theta)] = \nabla f(\theta)$.

Despite its theoretical appeal, large-scale bilevel optimization presents significant computational challenges. Computing the exact solution of the lower-level problems $\hat x_i(\theta)$ is typically intractable, and as a result, practical algorithms must rely on inexact solutions, which in turn yield approximate hypergradients that can significantly influence convergence behavior \cite{Pedregosa}. Moreover, the approximation of the hypergradient leads to biased estimator, which necessitates a careful analysis of convergence behavior \cite{salehi2024inexactstochastic}.


\subsection{Related work}
Derivative-free methods have been proposed to mitigate the computational complexities arising in approximations \cite{DFO,staudigl2024derivativefree}, sometimes incorporating adaptive or stochastic strategies. However, their scalability is limited when the number of parameters is large. By contrast, gradient-based bilevel methods offer better scalability \cite{maclaurin2015gradientbased,unrolledPaper,grazzi20a,MAID}, but existing approaches predominantly focus on deterministic formulations \cite{Ochs,Valkonen_single,suonpera_linearly_2024,MAID} and therefore do not fully exploit the speed-up and generalization benefits of stochastic optimization \cite{salehi2024inexactstochastic}.

A complementary line of work has analyzed approximate hypergradients under different regularity assumptions, both a priori and a posteriori \cite{boundMatthias,AD,grazzi20a,nonsmooth_bilevel,Piggy_nonsmooth_bolte,piggyback_convergence}. However, algorithms that rely on these approximations usually fix the number of lower-level iterations \cite{ramzi2023shine,lorraine2018stochastic,StochasticFrankWolfe,Stochastic_khanduri} and assume asymptotic convergence with a constant upper-level step size. This mismatch creates a gap between theoretical guarantees and practical implementations. Although recent work has investigated realistic assumptions for stochastic gradients \cite{gower2019sgd,SGD_better,biasedSGD}, extending these insights to bilevel optimization remains an open direction.

Other relevant works exploit hypergradient subroutines, such as approximating the inverse Hessian, to improve computational efficiency \cite{ehrhardt2024efficientgradientbasedmethodsbilevel,ramzi2023shine}. Jacobian-free approaches \cite{JFB_2022,BolPauVai2023,ShaCheHat2019,VicMetSoh2022}, also known as truncated backpropagation, provide a memory-efficient alternative to unrolling \cite{Unrolling_Ochs,unrolledPaper}. Instead of solving a linear system to approximate the inverse Hessian, these methods approximate it via  backpropagation of a single iteration.

A series of works have focused on efficient learning data-adaptive regularizers for inverse problems, developing cost-reduced, accuracy-dependent, and adaptive bilevel methods \cite{MAID,bogensperger_adaptively_2025}. These methods have also been successfully applied in related contexts such as deep image priors \cite{FastADP} and experimental design \cite{hamid}, demonstrating both effectiveness and computational speed-ups while maintaining theoretical soundness. In parallel, single-step approaches \cite{dizon2025,Valkonen_single} have introduced cost-reduced bilevel algorithms in a non-adaptive setting. However, both adaptive and single-step approaches remain deterministic and are therefore limited to smaller datasets. More recently, \cite{salehi2024inexactstochastic} introduced inexact stochastic hypergradients and established convergence guarantees by drawing connections to biased stochastic gradient descent. While this marked an important first step toward scalable stochastic bilevel methods in imaging, several key questions were left open. In particular, the effects of decaying step size and accuracy schedules on convergence were not addressed, nor were convergence rates established. Furthermore, the practical interaction between these schedules remained unexplored.

By considering the solver of the lower-level problem as fixed-point iterations of a contraction map, the Deep Equilibrium (DEQ) framework \cite{davy2025DEQ,DEQ_CT_Andrea,daniele2025deepequilibriummodelspoisson} provides a more general perspective, where implicit-function-theorem (IFT) based bilevel approaches arise as a special case. In these methods, the solution is characterized as a fixed point of a nonlinear operator, and practical work has largely focused on performance while relaxing formal convergence guarantees. The upper-level solver is typically chosen to be Adam \cite{kingma2015adam} with a tuned step size, leaving many theoretical questions regarding accuracy and step size schedules unaddressed. Analyzing DEQ under inexact fixed-point computations therefore naturally subsumes the IFT-based bilevel case, while also highlighting that advances in one viewpoint can transfer to the other.

\subsection{Contributions} 
This work extends the theoretical foundation of inexact stochastic bilevel optimization by establishing explicit convergence rates under polynomial and logarithmic accuracy and step size decay schedules. Our analysis reveals that optimal theoretical performance is achieved with accuracy decay $\epsilon_k = \mathcal{O}(k^{-1/4})$ and step size decay $\alpha_k = \mathcal{O}(k^{-1/2-\delta})$ with $0<\delta<0.5$ (optimally when $\delta \approx 0$), yielding convergence rates of $\mathcal{O}(k^{-1/4})$. For comparison, this matches the convergence rate of stochastic gradient descent for smooth nonconvex functions, where the expected squared gradient norm decays as $\mathcal{O}(k^{-1/2})$, corresponding to $\mathcal{O}(k^{-1/4})$ in terms of the expected gradient norm \cite{Ghadimi_SGD}. Building on this result, we numerically investigate different accuracy and step size schedules, including fixed step sizes that guarantee only neighborhood convergence \cite{biasedSGD,salehi2024inexactstochastic}, to assess their practical performance under finite computational budgets.

Through comprehensive numerical experiments on image denoising and inpainting with multiple regularizer architectures, namely Convex Ridge Regularizers (CRR) \cite{goujon2022CRR} and Input-Convex Neural Networks (ICNN) \cite{InputConvex} we uncover several key insights that bridge the gap between theory and practice. First, accuracy scheduling is more critical than step size strategy for achieving a desirable performance. Second, while decreasing step sizes are required for asymptotic convergence guarantees, fixed step sizes often yield superior results in practice under limited budgets, suggesting opportunities for an improved non-asymptotic analysis. Third, adaptive optimization methods such as Adam \cite{kingma2015adam} substantially outperform standard SGD \cite{RobinsonStephenM.1980SRGE}, particularly for more complex regularizer architectures. Adam achieves this by combining a moving average of the stochastic gradients, which induces a momentum effect, with a coordinate-wise rescaling akin to preconditioning. This highlights the importance of incorporating both mechanisms in bilevel optimization.

Our empirical findings reveal a nuanced relationship between theoretical optimality and practical performance. While the theoretically optimal parameter configuration provides stability and convergence guarantees, more aggressive accuracy decay or fixed step sizes often deliver superior results under finite computational budgets. These results demonstrate that asymptotically optimal schedules do not necessarily translate into the best finite-time performance, underscoring the practical relevance of our analysis for guiding parameter choices in bilevel learning algorithms.\\

The remainder of this paper is organized as follows. Section~\ref{sec:alg} introduces the theoretical and algorithmic framework, including the assumptions and lemmas required for our analysis. Section~\ref{sec:convergence} presents the convergence results and establishes the associated rates. Section~\ref{sec:experiments} reports comprehensive numerical experiments across multiple imaging tasks and regularizer architectures. Finally, Section~\ref{sec:conclusions} concludes with practical guidelines and directions for future research.

\section{Proposed method}
\label{sec:alg}
Considering the upper-level problem \eqref{Upper_stochastic}, the gradient of the upper-level objective, called the \emph{hypergradient}, is given by
\begin{equation}\label{grad_upper_stochastic}
    \mathbb{E}_{v \sim \mathcal{D}}\Big[ \nabla f_v(\theta) \coloneqq \frac{1}{m} \sum_{i=1}^{m} v_i   \nabla f_i(\theta) = \frac{1}{m}\sum_{i=1}^{m}v_i \partial \hat{x}_i(\theta)^T \nabla g_i(\hat{x}_i(\theta))\Big].
\end{equation}
To ensure the hypergradient is well-defined (i.e., $\hat{x}_i(\theta)$ is continuously differentiable), it suffices to assume that each lower-level function $h_i(x,\theta)$ is strongly convex in $x$ and continuously differentiable, with its first and second derivatives with respect to $x$ continuous in $\theta$. In addition, each upper-level loss $g_i$ is assumed to be continuously differentiable.

Each iteration $k \in \mathbb{N}$ of SGD takes the following form
\begin{equation}\label{SGD_update}
    \theta^{k+1} = \theta^k - \alpha_k \nabla f_{v^k}(\theta^k),
\end{equation}
where $v^k \sim \mathcal{D}$ are independent and identically distributed (i.i.d.) and $\alpha_k>0$ is the step size. 

Since, in general, the lower-level solution $\hat{x}_i(\theta^k)$ at each upper-level iteration $k\in\mathbb{N}$ can only be approximated, one typically has access only to the approximate lower-level solution $\tilde{x}_i(\theta^k)$, where $$\|\hat{x}_i(\theta^k) - \tilde{x}_i(\theta^k)\|\leq \epsilon.$$ Hence, corresponding to each element $v_i^k$ of the sampling vector $v^k$ for all $i\in \{1,2, \dots,m \}$, the stochastic hypergradient $v_i^k\partial \hat{x}_i(\theta^k)^T \nabla g_i(\hat{x}_i(\theta^k))$ can be approximated using methods described in \cite{boundMatthias, AD} with an error of $\mathcal{O}(\epsilon)$. This results in an inexact stochastic hypergradient in the upper-level problem, which we denote by
\begin{align}
    z_k = z_{v^k}(\theta^k) = \nabla f_{v^k}(\theta^k) + e_{v^k}(\theta^k) \quad v^k \sim \mathcal{D}. \label{eq:inexacthypergradient}
\end{align} Note that ensuring the boundedness of this error can be done utilizing the error bounds provided in \cite{boundMatthias,grazzi20a}. Putting these ingredients together results in the proposed algorithm Inexact Stochastic Gradient Descent (ISGD), see \cref{alg:isgd}.

\begin{algorithm}
\caption{Inexact Stochastic Gradient Descent (ISGD) with varying step size and accuracy}
\label{alg:isgd}
\begin{algorithmic}[1]
\State Input: Initial parameters $\theta^0 \in \mathbb{R}^d$, sampling $\mathcal{D}$, positive step size sequence $\{\alpha_k\}_{k=0}^\infty$, non-negative accuracy sequence $\{\epsilon_k\}_{k=0}^\infty$.
\For{$k = 0,1, \dots$}
    \State{Sample a mini-batch $v^k \sim \mathcal{D}$.}
    \State{Compute the approximate stochastic gradient $z_{v^k}(\theta^k)$ with error $\mathcal{O}(\epsilon_k)$.}
    \State{Update the iterate: \begin{align}
        \theta^{k+1} = \theta^k - \alpha_k z_{v^k}(\theta^k) \label{ISGD_update}
    \end{align}}
\EndFor
\end{algorithmic}
\end{algorithm}

{
Before presenting the full analysis, we summarize the main convergence result of ISGD.}

\begin{theorem}[Convergence Preview]\label{thm:preview}
Under Assumptions \ref{assumption1}--\ref{assump:sequences} and a sufficiently small initial step size $\alpha_0>0$, the iterates $\{\theta^k\}$ generated by \cref{alg:isgd} satisfy
\begin{equation}\label{eq:preview_main}
\min_{0 \le k \le K-1} 
\mathbb{E}\!\left[\|\nabla f(\theta^k)\|^2\right]
\le
\frac{C_1}{K\alpha_K}
+ 
C_2 L_K,
\qquad
L_K \coloneqq \frac{1}{K\alpha_K}\sum_{k=0}^{K-1} \alpha_k \epsilon_k^2,
\end{equation}
for some constants $C_1, C_2>0$. 
If $L_K \to 0$, then $\mathbb{E}[\|\nabla f(\theta^k)\|]\to 0$ as $K \to \infty$.

In particular, for step sizes $\alpha_k = \mathcal{O}(k^{-q})$ with $\tfrac{1}{2}<q<1$ and accuracies $\epsilon_k = \mathcal{O}(k^{-p})$ with $p>0$, we have
\begin{equation}\label{eq:preview_rate}
\min_{0 \le k \le K-1} 
\mathbb{E}\!\left[\|\nabla f(\theta^k)\|\right]
= 
\mathcal{O}\!\left(K^{-\frac{1}{2}\min\{2p,\,1-q\}}\right).
\end{equation}
\end{theorem}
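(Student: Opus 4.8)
The plan is to run a biased stochastic-gradient descent analysis, treating the inexact hypergradient $z_k = \nabla f_{v^k}(\theta^k) + e_{v^k}(\theta^k)$ from \eqref{eq:inexacthypergradient} as an unbiased estimator of $\nabla f(\theta^k)$ perturbed by a controllable bias. First I would invoke $L$-smoothness of $f$ (which follows from Assumptions~\ref{assumption1}--\ref{assump:sequences} via the regularity of each $h_i$ and $g_i$) to obtain the descent inequality
\begin{equation*}
f(\theta^{k+1}) \le f(\theta^k) - \alpha_k \langle \nabla f(\theta^k), z_k\rangle + \frac{L\alpha_k^2}{2}\|z_k\|^2,
\end{equation*}
and then take the conditional expectation given the iterate history. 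Writing the bias as $b_k \coloneqq \mathbb{E}[z_k \mid \theta^k] - \nabla f(\theta^k)$, the cross term splits into $-\alpha_k\|\nabla f(\theta^k)\|^2 - \alpha_k\langle \nabla f(\theta^k), b_k\rangle$, and Young's inequality together with the error bound $\|b_k\| \le c\,\epsilon_k$ (inherited from the hypergradient error estimates of \cite{boundMatthias,grazzi20a}) converts the second piece into $\tfrac{\alpha_k}{2}\|\nabla f(\theta^k)\|^2 + \tfrac{c^2\alpha_k}{2}\epsilon_k^2$.

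The second ingredient is a second-moment bound of the form $\mathbb{E}[\|z_k\|^2 \mid \theta^k] \le A\|\nabla f(\theta^k)\|^2 + \sigma^2 + C\epsilon_k^2$, separating the stochastic-sampling variance from the $\mathcal{O}(\epsilon_k)$ inexactness. Substituting this and collecting the gradient-norm terms leaves the coefficient $-\tfrac{\alpha_k}{2}(1 - LA\alpha_k)$ in front of $\|\nabla f(\theta^k)\|^2$; this is exactly where the hypothesis of a sufficiently small $\alpha_0$ enters, since I would impose $LA\alpha_0 \le \tfrac12$ so that the coefficient is bounded above by $-\tfrac{\alpha_k}{4}$ for all $k$. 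Taking total expectations, telescoping from $0$ to $K-1$ against $f^\ast \coloneqq \inf_\theta f(\theta)$, lower-bounding $\sum_{k=0}^{K-1}\alpha_k \ge K\alpha_K$ (valid since $\{\alpha_k\}$ is nonincreasing by Assumption~\ref{assump:sequences}), and dividing yields
\begin{equation*}
\min_{0\le k\le K-1}\mathbb{E}\!\left[\|\nabla f(\theta^k)\|^2\right] \le \frac{4(f(\theta^0)-f^\ast) + 2L\sigma^2\sum_{k\ge0}\alpha_k^2}{K\alpha_K} + C_2 L_K.
\end{equation*}
The constraint $q>\tfrac12$ is precisely what makes $\sum_k \alpha_k^2 = \sum_k k^{-2q}$ finite, so this quantity is absorbed into $C_1$, while the residual sum $\sum_k \alpha_k\epsilon_k^2$ equals $K\alpha_K L_K$, giving \eqref{eq:preview_main}. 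The convergence claim then follows because $q<1$ forces $K\alpha_K = K^{1-q}\to\infty$, and monotonicity of the square root (with Jensen) promotes the squared-norm bound to one for $\mathbb{E}[\|\nabla f(\theta^k)\|]$.

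For the explicit rate I would substitute $\alpha_k = \mathcal{O}(k^{-q})$ and $\epsilon_k = \mathcal{O}(k^{-p})$, so the first term is $\mathcal{O}(K^{-(1-q)})$ and $L_K = \mathcal{O}\!\big(K^{-(1-q)}\sum_{k=1}^{K-1} k^{-(q+2p)}\big)$. The estimate then hinges on a three-regime analysis of the partial sum $\sum_{k=1}^{K-1}k^{-(q+2p)}$: it is bounded when $q+2p>1$ (so $L_K = \mathcal{O}(K^{-(1-q)})$), grows like $K^{1-q-2p}$ when $q+2p<1$ (so $L_K = \mathcal{O}(K^{-2p})$), and contributes only a logarithmic factor at the boundary $q+2p=1$. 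Taking the slower of the two decay exponents gives a squared-norm rate of $\mathcal{O}(K^{-\min\{2p,\,1-q\}})$, and a final square root yields \eqref{eq:preview_rate}. I expect the main difficulty to be bookkeeping rather than conceptual: keeping the bias contribution cleanly isolated so that it lands in the $L_K$ term rather than in $C_1$, verifying that $LA\alpha_0\le\tfrac12$ is the only smallness condition required, and correctly merging the three regimes of $q+2p$ into the single compact exponent $\min\{2p,1-q\}$ (up to the logarithmic boundary case).
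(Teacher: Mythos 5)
Your overall architecture --- smoothness descent inequality, bias split via Young's inequality, a second-moment bound, telescoping, then a three-regime integral comparison for the rate --- mirrors the paper's, and your handling of the bias term, of the lower bound $\sum_{k=0}^{K-1}\alpha_k \ge K\alpha_K$, of Jensen's inequality, and of the $q+2p$ case analysis is essentially what appears in \cref{SGD_IBES_descent_lemma}, \cref{convergence_thm}, \cref{prop:poly_poly_rate}, and \cref{convergence_rate}. The gap is in your ``second ingredient.'' You posit the bound $\mathbb{E}[\|z_k\|^2 \mid \theta^k] \le A\|\nabla f(\theta^k)\|^2 + \sigma^2 + C\epsilon_k^2$, i.e.\ a uniformly bounded sampling variance, but this is not implied by Assumptions \ref{assumption1}--\ref{assump:sequences}. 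What those assumptions actually deliver (\cref{lemma_ABC}, quoted from \cite{SGD_better}, combined with \cref{second_moment_connect} to give \cref{ES_IBES}) is the expected-smoothness inequality $\mathbb{E}[\|z_v(\theta)\|^2] \le 2\tilde{A}(f(\theta) - f^{\text{inf}}) + \tilde{B}\|\nabla f(\theta)\|^2 + \tilde{C}$, which carries an additional term proportional to the optimality gap $f(\theta)-f^{\text{inf}}$. For finite-sum subsampling this $\tilde{A}$ is in general nonzero, so dropping that term silently strengthens the hypotheses under which you are proving the theorem.

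The term is not cosmetic: it is exactly what prevents the naive telescoping you propose. With it, the one-step inequality takes the form $\gamma_{k+1} \le (1 + L_{\nabla f}\tilde{A}\alpha_k^2)\,\gamma_k - (\text{gradient term}) + (\text{error terms})$, so the optimality gaps compound multiplicatively across iterations. The paper resolves this by introducing the weights $w_k = \frac{w_{k-1}\alpha_k}{\alpha_{k-1}(1+L_{\nabla f}\tilde{A}\alpha_k^2)}$ in \cref{SGD_IBES_descent_lemma}, telescoping the weighted quantities $w_k\gamma_k/\alpha_k$, and then proving in \cref{convergence_thm} that the infinite product $P = \prod_{j}(1+L_{\nabla f}\tilde{A}\alpha_j^2)$ is finite --- this is the place where Assumption \ref{assump:sequences}(i), $\sum_k \alpha_k^2 < \infty$, genuinely enters (via $\log(1+y)\le y$), not merely to absorb a $\sigma^2\sum_k\alpha_k^2$ term into $C_1$ as in your plan. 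To repair your argument you must either add bounded variance as an explicit extra assumption (proving a weaker statement than \cref{thm:preview}), or rerun your telescoping against the weighted sums $w_k r_k$ and reproduce the product-convergence argument; once that is done, the remainder of your proposal (the smallness condition on $\alpha_0$, which in the paper reads $\alpha_0 \le \rho(2\zeta-1)/(\zeta L_{\nabla f}\tilde{B})$, and the rate bookkeeping) goes through as in the paper.
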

The detailed convergence theorem and the corresponding convergence rate analysis are presented in \cref{convergence_thm} and \cref{convergence_rate}, respectively.
{
\paragraph{Insight}
ISGD converges in expectation to a stationary point of the bilevel objective.  
The rate in \eqref{eq:preview_rate} matches that of standard SGD \cite{ROBERTS_SGD} when the hypergradient error decays sufficiently fast (i.e., $p$ is sufficiently large), but can be slower if the accuracy is not improved at a fast enough rate. 
}
\section{Convergence analysis}
\label{sec:convergence}
To prove the convergence of \cref{alg:isgd}, we make the following regularity assumptions.
\begin{notation*}
For any $v \sim \mathcal{D}$ and for all \(\theta \in \mathbb{R}^d\),  the error of the hypergradient is denoted by $\|e_v(\theta)\|$ and the expected squared error by $E^2(\theta) \coloneqq \E_{v\sim \mathcal{D}} [\|e_v(\theta)\|^2]$. Moreover, at each iteration $k \in \mathbb{N}$ of \cref{alg:isgd}, we denote the hypergradient approximation given a sample $v^k$ and parameters $\theta^k$ by $z_k \coloneqq z_{v^k}(\theta^k)$. 
\end{notation*}

\begin{assumption}\label{assumption1} Each $f_i$ is $L_{\nabla f_i}$-smooth, which means $f$ and each $f_i$ are continuously differentiable with $L_{\nabla f}$ and $L_{\nabla f_i}$ Lipschitz gradients, respectively. Moreover, each $f_i$ is bounded below by $f_i^{\text{inf}}$.
\end{assumption}
Since, in practice, in each iteration $k\in \mathbb{N}$ of \cref{alg:isgd}, the error $\|e_{v^k}(\theta^k)\|$ is controlled by a deterministic a posteriori upper bound, we consider the following assumption.

\begin{assumption}\label{assumption2}
    At each iteration $k \in \mathbb{N}$ of \cref{alg:isgd}, for the error of the hypergradient we have $\E_{v^k\sim \mathcal{D}} [\|e_{v^k}(\theta^k)\|^2] \leq \epsilon_k^2$, where $\{ \epsilon_k\}_{k= 0}^\infty$ is a deterministic non-negative sequence.
\end{assumption}
Note that by weakening \cref{assumption2} to the case where the total expectation over all realizations  $\theta^k$  of the squared error is bounded, the theoretical results still hold; however, this condition is not practically verifiable.

The following assumption is crucial for establishing the convergence of \cref{alg:isgd} and for controlling the decay of the upper-level step size.
\begin{assumption}\label{assump:sequences}
The step size sequence $\{\alpha_k\}_{k=1}^\infty$ is non-increasing and positive, and satisfies:
\begin{enumerate}[(i)]
    \item $\sum_{k=1}^\infty \alpha_k^2 < \infty$ \quad (square summability condition)
    \item $\lim_{k\to \infty} \frac{1}{k\alpha_k} = 0$ \quad (step size decay condition)
\end{enumerate}
\end{assumption}
{
These step size conditions are closely related to the classical Robbins–Monro criteria \cite{ROBERTS_SGD,robbins1951stochastic} that ensure the convergence of stochastic approximation methods such as SGD. Condition $(i)$, $\sum_{k = 1}^\infty \alpha_k^2 < \infty$, is a standard requirement guaranteeing that the step sizes decrease sufficiently fast to suppress the variance of the stochastic gradients, thereby stabilizing the iterates. Condition $(ii)$, $\lim_{k \to \infty} \frac{1}{k\alpha_k} = 0$, is a stronger requirement than the usual non-summability condition $\sum_{k = 1}^\infty \alpha_k = \infty$. It enforces that $\alpha_k$ decays slower than $1/k$, a property often leveraged to establish optimal asymptotic convergence rates.}

\begin{lemma}\label{lemma_ABC}
         \cite[Proposition 2]{SGD_better} Expected Smoothness (ES): Let Assumption \ref{assumption1} hold, $f$ is bounded below by $f^{\text{inf}}$, and  $\mathbb{E}[v_i^2]<\infty$ for $1\leq i\leq m$. Then there exist constants $A, B, C\geq 0$ such that for all $\theta \in \mathbb{R}^d$, the stochastic gradient satisfies 
     \begin{equation*}
         \mathbb{E}_{v \sim \mathcal{D}}[ \|\nabla f_{v}(\theta)\|^2] \leq 2A ( f(\theta) - f^{\text{inf}} )+ B \|\nabla f(\theta)\|^2 + C.
     \end{equation*}
\end{lemma}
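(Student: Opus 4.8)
The plan is to compute $\mathbb{E}_{v\sim\mathcal{D}}[\|\nabla f_v(\theta)\|^2]$ almost exactly and then bound the residual terms. Writing $g_i \coloneqq \nabla f_i(\theta)$ so that $\nabla f_v(\theta) = \frac{1}{m}\sum_{i=1}^m v_i g_i$, I would first expand the squared norm as $\frac{1}{m^2}\sum_{i,j} v_i v_j \langle g_i, g_j\rangle$ and take expectations. Since the entries $v_i$ are independent with $\mathbb{E}[v_i]=1$, we have $\mathbb{E}[v_i v_j]=1$ for $i\neq j$ and $\mathbb{E}[v_i^2]$ for $i=j$. Separating the diagonal from the off-diagonal part and recognizing that $\nabla f(\theta)=\frac{1}{m}\sum_i g_i$ implies $\|\sum_i g_i\|^2 = m^2\|\nabla f(\theta)\|^2$ yields the clean identity
\[
\mathbb{E}_{v\sim\mathcal{D}}[\|\nabla f_v(\theta)\|^2] = \|\nabla f(\theta)\|^2 + \frac{1}{m^2}\sum_{i=1}^m \mathrm{Var}(v_i)\,\|\nabla f_i(\theta)\|^2,
\]
where $\mathrm{Var}(v_i)=\mathbb{E}[v_i^2]-1\geq 0$ is finite by hypothesis. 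This already produces the term $B\|\nabla f(\theta)\|^2$ with $B=1$, and the remaining work is to control the variance-weighted sum.

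Next I would bound each $\|\nabla f_i(\theta)\|^2$ using the standard consequence of smoothness and lower boundedness (\cref{assumption1}): for an $L_{\nabla f_i}$-smooth function bounded below by $f_i^{\mathrm{inf}}$, applying the descent inequality at the point $\theta - L_{\nabla f_i}^{-1}\nabla f_i(\theta)$ gives $\|\nabla f_i(\theta)\|^2 \leq 2 L_{\nabla f_i}\bigl(f_i(\theta) - f_i^{\mathrm{inf}}\bigr)$. Substituting this and bounding $\mathrm{Var}(v_i)$ and $L_{\nabla f_i}$ by their maxima over $i$ reduces the variance sum to a constant multiple of $\frac{1}{m}\sum_i \bigl(f_i(\theta) - f_i^{\mathrm{inf}}\bigr)$, which equals $f(\theta) - \bar f^{\mathrm{inf}}$ with $\bar f^{\mathrm{inf}} \coloneqq \frac{1}{m}\sum_i f_i^{\mathrm{inf}}$.

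The one subtlety — and the step I expect to require the most care — is that the bound is naturally expressed in terms of the \emph{average} of the individual lower bounds $\bar f^{\mathrm{inf}}$ rather than the global lower bound $f^{\mathrm{inf}}$ appearing in the statement. Since the infimum of a sum dominates the sum of infima, one has $\bar f^{\mathrm{inf}} \leq f^{\mathrm{inf}}$, so I would write $f(\theta)-\bar f^{\mathrm{inf}} = \bigl(f(\theta)-f^{\mathrm{inf}}\bigr) + \bigl(f^{\mathrm{inf}}-\bar f^{\mathrm{inf}}\bigr)$ and absorb the nonnegative constant gap $f^{\mathrm{inf}}-\bar f^{\mathrm{inf}}$ into $C$. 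Collecting terms then gives the claim with $A = \tfrac{1}{m}\,\max_i \mathrm{Var}(v_i)\,\max_i L_{\nabla f_i}$, $B=1$, and $C = 2A\,(f^{\mathrm{inf}}-\bar f^{\mathrm{inf}}) \geq 0$, all of which are nonnegative and finite precisely because $\mathbb{E}[v_i^2]<\infty$ and each $f_i$ is smooth and bounded below.
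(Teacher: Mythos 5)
Your proof is correct: the mean--variance decomposition $\mathbb{E}_{v}[\|\nabla f_v(\theta)\|^2] = \|\nabla f(\theta)\|^2 + \tfrac{1}{m^2}\sum_i \mathrm{Var}(v_i)\|\nabla f_i(\theta)\|^2$ (valid by independence and $\mathbb{E}[v_i]=1$), the standard bound $\|\nabla f_i(\theta)\|^2 \le 2L_{\nabla f_i}\bigl(f_i(\theta)-f_i^{\mathrm{inf}}\bigr)$, and the absorption of the nonnegative gap $f^{\mathrm{inf}}-\bar f^{\mathrm{inf}}$ into $C$ all check out, giving finite nonnegative constants $A$, $B=1$, $C$. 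The paper does not prove this lemma itself but imports it from \cite[Proposition 2]{SGD_better}, and your argument is essentially the standard proof given in that reference, so your derivation is a faithful self-contained reconstruction of the same approach.
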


The following auxiliary lemmas will be utilized in proving the convergence of inexact stochastic gradient.
\begin{lemma}\label{Young'sLemma} \cite[Lemma 2]{salehi2024inexactstochastic}
    For all $\theta \in \mathbb{R}^d$, $v \sim \mathcal{D}$, and $\zeta>0$ we have
    \begin{equation}\label{Young's}
        |\langle \nabla f(\theta), \E_{v\sim \mathcal{D}}[e_v(\theta)]\rangle| \leq \frac{\|\nabla f(\theta)\|^2}{2\zeta} + \frac{\zeta}{2} (\E_{v \sim D}[\|e_v(\theta)\||])^2
    \end{equation}
\end{lemma}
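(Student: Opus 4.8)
The statement is a purely algebraic consequence of two standard inequalities and does not rely on the bilevel structure at all; the plan is to chain the Cauchy--Schwarz inequality together with the weighted Young's inequality.

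First I would apply Cauchy--Schwarz in $\mathbb{R}^d$ to the inner product, giving
\[
|\langle \nabla f(\theta), \E_{v\sim \mathcal{D}}[e_v(\theta)]\rangle|
\leq \|\nabla f(\theta)\|\, \|\E_{v\sim \mathcal{D}}[e_v(\theta)]\|.
\]
Since the right-hand side of the claim involves the expectation of the error \emph{norm} rather than the norm of the expected error, the one genuine step is to pass between these via Jensen's inequality: the Euclidean norm is convex, so $\|\E_{v\sim \mathcal{D}}[e_v(\theta)]\| \leq \E_{v\sim \mathcal{D}}[\|e_v(\theta)\|]$. Combining the two bounds yields $\|\nabla f(\theta)\|\,\E_{v\sim \mathcal{D}}[\|e_v(\theta)\|]$ as an upper bound on the absolute inner product.

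Finally I would split this product using the weighted Young's inequality $ab \leq a^2/(2\zeta) + \zeta b^2/2$, which holds for every $\zeta > 0$ and follows at once from expanding $\bigl(a/\sqrt{\zeta} - \sqrt{\zeta}\,b\bigr)^2 \geq 0$. Setting $a = \|\nabla f(\theta)\|$ and $b = \E_{v\sim \mathcal{D}}[\|e_v(\theta)\|]$ produces precisely the two terms on the right-hand side of \eqref{Young's}. No step here presents a real obstacle: this is a convenience lemma whose only mildly nontrivial ingredient is the Jensen passage from $\|\E_{v\sim \mathcal{D}}[e_v(\theta)]\|$ to $\E_{v\sim \mathcal{D}}[\|e_v(\theta)\|]$, and that is exactly what makes the bound depend on the expected error norm controlled by the accuracy sequence in \cref{assumption2}.
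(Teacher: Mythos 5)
Your proof is correct and is essentially the paper's own route: the paper cites this as \cite[Lemma 2]{salehi2024inexactstochastic}, whose argument is exactly the chain Cauchy--Schwarz, then Jensen's inequality to pass from $\|\E_{v\sim\mathcal{D}}[e_v(\theta)]\|$ to $\E_{v\sim\mathcal{D}}[\|e_v(\theta)\|]$, then the weighted Young's inequality with parameter $\zeta$. Nothing is missing; all three steps are valid as you state them.
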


The following lemma relates the second moment of the inexact stochastic gradient with the stochastic gradient.
\begin{lemma}\label{second_moment_connect}\cite[Lemma 3]{salehi2024inexactstochastic}
Let $\eta>0$. The following relation between the second moment of the exact and inexact stochastic gradients hold
\begin{equation}\label{i_e_expectation}
    \E_{v \sim \mathcal{D}}[\|z_v(\theta)\|^2] \leq \frac{1+\eta}{\eta} \E_{v \sim \mathcal{D}} [\|\nabla f_v(\theta)\|^2] + (1+\eta) \E_{v \sim \mathcal{D}}[\|e_v(\theta)\|^2].
\end{equation}
\end{lemma}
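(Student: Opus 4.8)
The plan is to exploit the additive decomposition of the inexact stochastic hypergradient recorded in \eqref{eq:inexacthypergradient}, namely $z_v(\theta) = \nabla f_v(\theta) + e_v(\theta)$, establish a pointwise-in-$v$ bound via a weighted Young's inequality, and then pass to the expectation over $v \sim \mathcal{D}$. This is the natural route because the relation between $z_v$ and $\nabla f_v$ is purely additive, so the entire content of the lemma is a controlled splitting of a squared sum.

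First I would expand the squared norm using the decomposition,
\[
\|z_v(\theta)\|^2 = \|\nabla f_v(\theta) + e_v(\theta)\|^2 = \|\nabla f_v(\theta)\|^2 + 2\langle \nabla f_v(\theta), e_v(\theta)\rangle + \|e_v(\theta)\|^2.
\]
Next, to absorb the cross term I would apply Young's inequality in the form $2\langle a, b\rangle \le \tfrac{1}{\eta}\|a\|^2 + \eta\|b\|^2$, valid for every $\eta > 0$, with the choice $a = \nabla f_v(\theta)$ and $b = e_v(\theta)$. Substituting and grouping the coefficients of $\|\nabla f_v(\theta)\|^2$ and $\|e_v(\theta)\|^2$ yields the pointwise estimate
\[
\|z_v(\theta)\|^2 \le \Bigl(1 + \tfrac{1}{\eta}\Bigr)\|\nabla f_v(\theta)\|^2 + (1 + \eta)\|e_v(\theta)\|^2 = \tfrac{1+\eta}{\eta}\|\nabla f_v(\theta)\|^2 + (1+\eta)\|e_v(\theta)\|^2.
\]

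Finally, I would take $\E_{v \sim \mathcal{D}}[\,\cdot\,]$ of both sides and invoke linearity and monotonicity of the expectation, which preserves the inequality and produces exactly \eqref{i_e_expectation}. The only point that requires any care — and it is a matter of bookkeeping rather than a genuine obstacle — is matching the constants: one must use the symmetric split $\tfrac{1}{\eta}$ and $\eta$ for the cross term so that, after combining with the leading $\|\nabla f_v(\theta)\|^2$ and $\|e_v(\theta)\|^2$ from the expansion, the coefficients collapse precisely to $\tfrac{1+\eta}{\eta}$ and $(1+\eta)$. Everything else is a direct computation, and no regularity assumption beyond the existence of the relevant second moments is needed.
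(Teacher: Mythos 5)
Your proof is correct and is essentially the same argument as in the cited source \cite[Lemma 3]{salehi2024inexactstochastic} (the paper itself only cites the result without reproducing the proof): expand $\|z_v(\theta)\|^2 = \|\nabla f_v(\theta) + e_v(\theta)\|^2$, control the cross term with the weighted Young's inequality $2\langle a,b\rangle \le \tfrac{1}{\eta}\|a\|^2 + \eta\|b\|^2$, and take expectations. The constants collapse exactly as you describe, and no assumptions beyond finiteness of the relevant second moments are needed.
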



\begin{proposition}\label{ES_IBES}\cite[Proposition 1]{salehi2024inexactstochastic}
    Suppose \cref{assumption1} hold and $\mathbb{E}[v_i^2]<\infty$ for $1\leq i\leq m$. Let $\eta >0$, $\zeta > 0$, and  $b(\zeta) = 1 - \frac{1}{2\zeta}$, $c(\zeta, \theta) = \frac{\zeta}{2}E^2(\theta)$, $\tilde{A}(\eta) = \frac{1+\eta}{\eta} A$,
    $\tilde{B}(\eta) = \frac{1+\eta}{\eta} B$,
    $\tilde{C}(\eta, \theta) = \frac{1+\eta}{\eta} C + (1+\eta)E^2(\theta)$.
    Then, for all $\theta \in \mathbb{R}^d$, the inexact stochastic gradient estimator satisfies 
     \begin{align}
        \langle \nabla f(\theta), \E_{v\sim \mathcal{D}}[z_v(\theta)]\rangle &\geq b(\zeta)\|\nabla f(\theta)\|^2 -c(\zeta, E^2(\theta)),\label{biasineq} \\
         \mathbb{E}_{v \sim \mathcal{D}}[ \|z_{v}(\theta)\|^2] &\leq 2\tilde{A}(\eta) ( f(\theta) - f^{\text{inf}} )+ \tilde{B}(\eta) \|\nabla f(\theta)\|^2 + \tilde{C}(\eta, E^2(\theta))\label{tildeABC}.
     \end{align}
\end{proposition}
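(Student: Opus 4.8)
The plan is to establish both inequalities as direct consequences of the auxiliary results already in hand, so the proof reduces to threading together Lemmas~\ref{lemma_ABC}--\ref{second_moment_connect} with two elementary inequalities. The common starting point is the decomposition $z_v(\theta) = \nabla f_v(\theta) + e_v(\theta)$ from \eqref{eq:inexacthypergradient} together with the unbiasedness of $\nabla f_v$, which gives $\E_{v\sim\mathcal{D}}[z_v(\theta)] = \nabla f(\theta) + \E_{v\sim\mathcal{D}}[e_v(\theta)]$.

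For the bias inequality \eqref{biasineq}, I would take the inner product with $\nabla f(\theta)$ and split it as $\langle \nabla f(\theta), \E_{v\sim\mathcal{D}}[z_v(\theta)]\rangle = \|\nabla f(\theta)\|^2 + \langle \nabla f(\theta), \E_{v\sim\mathcal{D}}[e_v(\theta)]\rangle$. The cross term is then bounded from below using \cref{Young'sLemma} with the same parameter $\zeta$: since $|\langle \nabla f(\theta), \E_{v\sim\mathcal{D}}[e_v(\theta)]\rangle| \le \tfrac{1}{2\zeta}\|\nabla f(\theta)\|^2 + \tfrac{\zeta}{2}(\E_{v\sim\mathcal{D}}[\|e_v(\theta)\|])^2$, dropping the absolute value and rearranging yields $\langle \nabla f(\theta), \E_{v\sim\mathcal{D}}[z_v(\theta)]\rangle \ge (1-\tfrac{1}{2\zeta})\|\nabla f(\theta)\|^2 - \tfrac{\zeta}{2}(\E_{v\sim\mathcal{D}}[\|e_v(\theta)\|])^2$. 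The final step is to upgrade the first-moment term to the second moment via Jensen's inequality, $(\E_{v\sim\mathcal{D}}[\|e_v(\theta)\|])^2 \le \E_{v\sim\mathcal{D}}[\|e_v(\theta)\|^2] = E^2(\theta)$; because this term enters with a negative sign, the inequality direction is preserved and one recovers $b(\zeta)\|\nabla f(\theta)\|^2 - c(\zeta, E^2(\theta))$ exactly.

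The second-moment bound \eqref{tildeABC} is even more mechanical. I would first invoke \cref{second_moment_connect} to write $\E_{v\sim\mathcal{D}}[\|z_v(\theta)\|^2] \le \tfrac{1+\eta}{\eta}\E_{v\sim\mathcal{D}}[\|\nabla f_v(\theta)\|^2] + (1+\eta)E^2(\theta)$, and then substitute the Expected Smoothness bound of \cref{lemma_ABC} for $\E_{v\sim\mathcal{D}}[\|\nabla f_v(\theta)\|^2]$. Collecting coefficients immediately produces $\tilde{A}(\eta) = \tfrac{1+\eta}{\eta}A$, $\tilde{B}(\eta) = \tfrac{1+\eta}{\eta}B$, and $\tilde{C}(\eta, E^2(\theta)) = \tfrac{1+\eta}{\eta}C + (1+\eta)E^2(\theta)$, matching the stated definitions.

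I do not anticipate a genuine obstacle, since the heavy lifting is done by the cited lemmas. The only point requiring care is the sign-sensitive application of Jensen's inequality in the bias bound, where one must check that passing from $(\E_{v\sim\mathcal{D}}[\|e_v(\theta)\|])^2$ to $E^2(\theta)$ relaxes rather than tightens the lower bound. I would also emphasize that $\eta$ and $\zeta$ are kept free throughout, as this flexibility is exactly what the later convergence analysis exploits when tuning these parameters.
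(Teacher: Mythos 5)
Your proof is correct and follows exactly the route the paper intends: the proposition is imported from \cite[Proposition 1]{salehi2024inexactstochastic}, and its proof is precisely the assembly you describe---\cref{Young'sLemma} plus Jensen's inequality (applied with the correct sign) for the bias bound \eqref{biasineq}, and \cref{second_moment_connect} composed with the expected-smoothness bound of \cref{lemma_ABC} for the second-moment bound \eqref{tildeABC}. No gaps; the only implicit ingredient worth stating is the unbiasedness $\E_{v\sim\mathcal{D}}[\nabla f_v(\theta)]=\nabla f(\theta)$, which follows from $\E[v_i]=1$ as noted after \eqref{upper_stochastic}, and you use it correctly.
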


The following lemma extends the ideas of \cite[Lemma 2]{SGD_better} and \cite[Lemma 3.3]{ROBERTS_SGD} to the case that a biased inexact stochastic gradient \eqref{ISGD_update} is employed. It plays a key role in proving the convergence of Algorithm~\ref{alg:isgd}.

\begin{lemma}\label{SGD_IBES_descent_lemma}
     Suppose Assumptions \ref{assumption1} and \ref{assumption2} hold, and $z_k$ denote the approximate hypergradient defined in \eqref{eq:inexacthypergradient}.  Consider a non-increasing positive sequence of step sizes $\{\alpha_k\}_{k=0}^{\infty}$ with $\alpha_0 \leq \frac{\rho (2\zeta -1)}{\zeta L_{\nabla f}\tilde{B}}$,  where $0<\rho <1$ and $\zeta>\frac{1}{2}$. Defining $\bar{\rho} \coloneqq \frac{(1-\rho)(2\zeta-1)}{\zeta}>0$ and $\mu \coloneqq \gamma_1 (1 + \alpha_0^2 L_{\nabla f}\tilde{A})$, we have
    $$\frac{\bar{\rho}}{2} \sum_{k=0}^{K-1} w_k r_k + \frac{w_{K-1}}{\alpha_{K-1}} \gamma_{K} \leq \frac{\mu}{\alpha_0} + \frac{(1+\eta)L_{\nabla f}}{2} \left(\frac{C}{\eta} \sum_{k=0}^{K-1} w_k \alpha_k+ \sum_{k=0}^{K-1} \epsilon_k^2 w_k \alpha_k \right) +  \frac{\zeta}{2}\sum_{k=0}^{K-1} \epsilon_k^2 w_k,$$


where $r_k \coloneqq \mathbb{E}\left[\|\nabla f(\theta^k)\|^2\right]$, $w_0 \coloneqq 1$ and $w_k \coloneqq \frac{w_{k-1} \alpha_k}{\alpha_{k-1}(1+L_{\nabla f}\alpha_k^2 \tilde{A})}$ for $k\geq 1$, and $\gamma_k \coloneqq \mathbb{E}[f(\theta^k)] - f^{\text{inf}}$.
\end{lemma}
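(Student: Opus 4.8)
The plan is to reduce the statement to a one-step recursion bounding $\gamma_{k+1}$ in terms of $\gamma_k$ and $r_k$, and then to collapse that recursion through a weighted telescoping sum whose weights are precisely $\{w_k\}$. First I would apply $L_{\nabla f}$-smoothness of $f$ (\cref{assumption1}) to the iterate update \eqref{ISGD_update}, giving the pathwise descent inequality
\[ f(\theta^{k+1}) \le f(\theta^k) - \alpha_k\langle \nabla f(\theta^k), z_k\rangle + \frac{L_{\nabla f}\alpha_k^2}{2}\|z_k\|^2. \]
Taking the conditional expectation over the fresh minibatch $v^k$ given $\theta^k$ (so that $\nabla f(\theta^k)$ is treated as fixed), then the total expectation, and substituting the bias bound \eqref{biasineq} for $\langle \nabla f(\theta^k), \mathbb{E}_{v^k}[z_k]\rangle$ together with the second-moment bound \eqref{tildeABC} for $\mathbb{E}_{v^k}[\|z_k\|^2]$ from \cref{ES_IBES}, while using $E^2(\theta^k)\le\epsilon_k^2$ from \cref{assumption2} (which holds for every realization of $\theta^k$), I obtain, with $b(\zeta)=1-\tfrac{1}{2\zeta}$,
\[ \gamma_{k+1} \le (1+L_{\nabla f}\alpha_k^2\tilde{A})\gamma_k - \alpha_k\Big(b(\zeta)-\tfrac{L_{\nabla f}\alpha_k\tilde{B}}{2}\Big) r_k + \tfrac{\zeta}{2}\alpha_k\epsilon_k^2 + \tfrac{L_{\nabla f}\alpha_k^2}{2}\Big(\tfrac{1+\eta}{\eta}C + (1+\eta)\epsilon_k^2\Big). \]

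I would then exploit the step size cap $\alpha_k\le\alpha_0\le\frac{\rho(2\zeta-1)}{\zeta L_{\nabla f}\tilde{B}}$ to control the coefficient of $r_k$: since $\frac{L_{\nabla f}\alpha_k\tilde{B}}{2}\le\frac{\rho(2\zeta-1)}{2\zeta}$ and $b(\zeta)=\frac{2\zeta-1}{2\zeta}$, we get $b(\zeta)-\frac{L_{\nabla f}\alpha_k\tilde{B}}{2}\ge\frac{(1-\rho)(2\zeta-1)}{2\zeta}=\frac{\bar\rho}{2}$, which is strictly positive precisely because $0<\rho<1$ and $\zeta>\tfrac12$. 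Writing $E_k\coloneqq\tfrac{\zeta}{2}\alpha_k\epsilon_k^2+\tfrac{L_{\nabla f}\alpha_k^2}{2}\big(\tfrac{1+\eta}{\eta}C+(1+\eta)\epsilon_k^2\big)$, the recursion tightens to $\gamma_{k+1}\le(1+L_{\nabla f}\alpha_k^2\tilde{A})\gamma_k-\tfrac{\bar\rho}{2}\alpha_k r_k+E_k$.

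The core of the argument, and the step I expect to be the main obstacle, is selecting the multiplier that makes the multiplicative factor $(1+L_{\nabla f}\alpha_k^2\tilde{A})$ collapse into a telescoping sum rather than accumulate. Setting $u_k\coloneqq w_k/\alpha_k$, the defining recursion for $w_k$ is equivalent to $u_k(1+L_{\nabla f}\alpha_k^2\tilde{A})=u_{k-1}$ for $k\ge1$, with $u_0=1/\alpha_0$. Multiplying the tightened recursion by $u_k$ turns the leading term into $u_{k-1}\gamma_k$ for $k\ge1$ and converts $u_k\alpha_k r_k$ into $w_k r_k$. Summing over $k=0,\dots,K-1$, all intermediate $\gamma_k$ cancel, leaving only $u_{K-1}\gamma_K=\frac{w_{K-1}}{\alpha_{K-1}}\gamma_K$ on the left and, from the non-telescoping $k=0$ term, the initial contribution $u_0(1+L_{\nabla f}\alpha_0^2\tilde{A})\gamma_0=\frac{(1+L_{\nabla f}\alpha_0^2\tilde{A})\gamma_0}{\alpha_0}=\frac{\mu}{\alpha_0}$ on the right.

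It then remains to expand the accumulated error $\sum_{k=0}^{K-1}u_k E_k$. Using $u_k=w_k/\alpha_k$ one checks that $u_k E_k=\tfrac{\zeta}{2}w_k\epsilon_k^2+\tfrac{(1+\eta)L_{\nabla f}}{2}\big(\tfrac{C}{\eta}w_k\alpha_k+\epsilon_k^2 w_k\alpha_k\big)$, and summing these three families of terms against $\frac{\bar\rho}{2}\sum_{k=0}^{K-1}w_k r_k+\frac{w_{K-1}}{\alpha_{K-1}}\gamma_K$ reproduces exactly the claimed inequality. The delicate points here are therefore bookkeeping rather than analytic: confirming that the $k=0$ step alone retains its full factor $(1+L_{\nabla f}\alpha_0^2\tilde{A})$ (which is what defines $\mu$), that every $w_k$ is nonnegative (immediate from $w_0=1$, positivity of the $\alpha_k$, and $\tilde{A}\ge0$), and that $\bar\rho>0$ so the left-hand gradient sum carries the correct sign. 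Once the weights $\{w_k\}$ are recognized as the telescoping factors for the growth constant $1+L_{\nabla f}\alpha_k^2\tilde{A}$, the remaining manipulations are routine algebra.
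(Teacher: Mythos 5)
Your proposal is correct and follows essentially the same route as the paper's proof: the smoothness descent inequality, the bias and second-moment bounds of \cref{ES_IBES} combined with \cref{assumption2}, the step-size cap producing the coefficient $\tfrac{\bar\rho}{2}$, and multiplication by $w_k/\alpha_k$ followed by telescoping via the identity $\frac{w_k(1+L_{\nabla f}\alpha_k^2\tilde A)}{\alpha_k}=\frac{w_{k-1}}{\alpha_{k-1}}$. One remark: your telescoping yields the initial term $\gamma_0(1+\alpha_0^2 L_{\nabla f}\tilde A)/\alpha_0$ while the lemma defines $\mu$ via $\gamma_1$; the paper's own summation produces exactly the same $\gamma_0$ term, so this mismatch is an indexing typo in the stated definition of $\mu$ rather than a gap in your argument.
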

\begin{proof}
    Since $f$ is $L_{\nabla f}$-smooth, for all $k=0,1,\ldots$ we have
    \begin{equation*}
    \begin{split}
        f({\theta}^{k+1}) &\leq  f(\theta^k) + \langle \nabla f(\theta^k), {\theta}^{k+1}- \theta^k \rangle+ \frac{L_{\nabla f}}{2} \|\theta^{k+1}-\theta^k\|^2 \\ &= f(\theta^k) +  \alpha_k \langle \nabla f(\theta^k), - z_k \rangle + \frac{\alpha_k^2 L_{\nabla f}}{2} \|z_k\|^2.
        \end{split}
    \end{equation*}
Taking conditional expectation over $\theta^k$ and utilizing \cref{ES_IBES} for bounding the term $\langle \nabla f(\theta^k),  \E_{v^k\sim \mathcal{D}}[z_k] \rangle$ and $\E_{v^k\sim \mathcal{D}} \left [\|z_k\|^2\right ]$, we have
\begin{align*}
    \E[f(\theta^{k+1}) | \theta^k] &\leq f(\theta^k) - \alpha_k \langle \nabla f(\theta^k),  \E_{v^k\sim \mathcal{D}}[z_k] \rangle + \frac{\alpha_k^2 L_{\nabla f}}{2} \E_{v^k\sim \mathcal{D}} \left [\|z_k\|^2\right ]\\
    \overset{\cref{ES_IBES}}{\leq}& f(\theta^k) - \frac{2\zeta-1}{2\zeta} \alpha_k \|\nabla f(\theta^k)\|^2 + \frac{\zeta E^2_k}{2} \alpha_k  \\&+ \frac{\alpha_k^2 L_{\nabla f}}{2} (2\tilde{A} ( f(\theta^k) - f^{\text{inf}} )+ \tilde{B} \|\nabla f(\theta^k)\|^2 + \tilde{C} ),
\end{align*}
where $E^2_k \coloneqq \E_{v^k\sim \mathcal{D}} [e_{v^k}(\theta^k)^2]$.
After simplification, we get
\begin{multline*}
    \E[f(\theta^{k+1}) | \theta^k] \leq f(\theta^k)   + \alpha_k \left(\frac{\alpha_k L_{\nabla f} \tilde{B}}{2} -\frac{2\zeta-1}{2\zeta}\right) \|\nabla f(\theta^k)\|^2 \\+ \alpha_k^2L_{\nabla f} \tilde{A}(f(\theta^k) - f^{\text{inf}} ) + \frac{\alpha_k^2 L_{\nabla f}}{2}\left(\frac{1+\eta}{\eta} C + (1+\eta)E^2_k\right) +  \frac{\zeta E^2_k}{2} \alpha_k.
\end{multline*}
Subtracting $f^{\text{inf}}$ from both sides, then using the tower property and rearranging, we get
\begin{multline*}
    \E[f(\theta^{k+1})] - f^{\text{inf}} + \alpha_k\left(\frac{2\zeta-1}{2\zeta} - \frac{\alpha_k L_{\nabla f} \tilde{B}}{2}\right) \E\left [\|\nabla f(\theta^k)\|^2\right]  \\ \leq (1+\alpha_k^2L_{\nabla f} \tilde{A}) \E[f(\theta^k) - f^{\text{inf}}] + \frac{\alpha_k^2 L_{\nabla f}}{2}\left(\frac{1+\eta}{\eta} C + (1+\eta)\E[E^2_k]\right) + \frac{\zeta \,\E[E^2_k]}{2}\alpha_k.
\end{multline*}
\Cref{assumption2} and the definition of the step size $\alpha_k$ yields 
$$ \frac{\bar{\rho}}{2} \alpha_k r_k \leq \gamma_k (1+\alpha_k^2L_{\nabla f} \tilde{A}) - \gamma_{k+1}+\frac{\alpha_k^2 L_{\nabla f}}{2}\left(\frac{1+\eta}{\eta} C + (1+\eta)\epsilon^2_k\right) + \frac{\zeta \epsilon_k^2}{2} \alpha_k.$$
Multiplying both sides by $\frac{w_k}{\alpha_k}$, we get
$$\frac{\bar{\rho}}{2}w_k r_k \leq \frac{w_k (1+\alpha_k^2L_{\nabla f} \tilde{A})}{\alpha_k}\gamma_k  - \frac{w_k}{\alpha_k}\gamma_{k+1}+ \frac{\alpha_k L_{\nabla f}}{2}\big(\frac{1+\eta}{\eta} C + (1+\eta)\epsilon^2_k\big) w_k +  \frac{\zeta \epsilon_k^2}{2} w_k,$$
for all $k=0,1,\ldots$.
By definition of $w_k$, we have $\frac{w_k (1+\alpha_k^2L_{\nabla f} \tilde{A})}{\alpha_k} = \frac{w_{k-1}}{\alpha_{k-1}}$ for all $k\geq 1$.
So, summing both sides for $k = 0,1, \dots,K-1 $, we have
$$\frac{\bar{\rho}}{2} \sum_{k=0}^{K-1} w_k r_k \leq \frac{\mu}{\alpha_0} - \frac{w_{K-1}}{\alpha_{K-1}} \gamma_{K} + \frac{(1+\eta)L_{\nabla f}}{2\eta} C \sum_{k=0}^{K-1} w_k \alpha_k+ \frac{(1+\eta)L_{\nabla f}}{2} \sum_{k=0}^{K-1} \epsilon_k^2 w_k \alpha_k + \sum_{k=0}^{K-1}  \frac{\zeta \epsilon_k^2}{2}w_k.$$
Rearranging yields the required result.
\end{proof}

\begin{theorem}\label{bound_f_expectation}
    Let Assumptions \ref{assumption1} and \ref{assumption2} hold and the initial step size $\alpha_0$ satisfies the bound in \cref{SGD_IBES_descent_lemma}. Moreover, let $\mu$, $r_k$, $\gamma_k$, and $w_k$ be as defined in \cref{SGD_IBES_descent_lemma}. We have 
$$\min_{0\leq k\leq K-1} \mathbb{E}\left[\|\nabla f(\theta^k)\|^2\right] \leq \frac{1}{\bar{\rho}Kw_K} \left( \frac{2\mu}{\alpha_0} +  {(1+\eta)L_{\nabla f}} \left(\frac{C}{\eta} \sum_{k=0}^{K-1} w_k \alpha_k+ \sum_{k=0}^{K-1} \epsilon_k^2 w_k \alpha_k \right) + \sum_{k=0}^{K-1} {\zeta}\epsilon_k^2 w_k\right).$$
\end{theorem}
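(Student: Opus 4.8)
The plan is to start from the accumulated descent inequality established in \cref{SGD_IBES_descent_lemma} and convert the weighted sum of squared gradient norms into a bound on the minimum. First I would observe that the term $\frac{w_{K-1}}{\alpha_{K-1}}\gamma_K$ appearing on the left-hand side of that lemma is non-negative: by definition $\gamma_K = \mathbb{E}[f(\theta^K)] - f^{\text{inf}} \geq 0$ since each realization satisfies $f(\theta^K) \geq f^{\text{inf}}$, while $w_{K-1}>0$ and $\alpha_{K-1}>0$. Dropping this non-negative term and multiplying through by $2$ gives
\begin{equation*}
\bar{\rho} \sum_{k=0}^{K-1} w_k r_k \leq \frac{2\mu}{\alpha_0} + (1+\eta)L_{\nabla f}\left(\frac{C}{\eta}\sum_{k=0}^{K-1} w_k \alpha_k + \sum_{k=0}^{K-1}\epsilon_k^2 w_k \alpha_k\right) + \zeta \sum_{k=0}^{K-1}\epsilon_k^2 w_k,
\end{equation*}
whose right-hand side is precisely the bracketed expression in the claimed bound.

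The key remaining step is to lower-bound the weighted sum $\sum_{k=0}^{K-1} w_k r_k$ in terms of $\min_{0\le k\le K-1} r_k$ and the single weight $w_K$. For this I would first show that the sequence $\{w_k\}$ is non-increasing. From the recursion $w_k = \frac{w_{k-1}\alpha_k}{\alpha_{k-1}(1+L_{\nabla f}\alpha_k^2\tilde{A})}$, the multiplicative factor satisfies $\frac{\alpha_k}{\alpha_{k-1}} \leq 1$ because $\{\alpha_k\}$ is non-increasing, while $1+L_{\nabla f}\alpha_k^2\tilde{A} \geq 1$; hence $w_k \leq w_{k-1}$ for every $k\geq 1$. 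Monotonicity then yields $w_k \geq w_K$ for all $k \in \{0,\dots,K-1\}$, so that $\sum_{k=0}^{K-1} w_k \geq K w_K$.

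Combining these observations, and using that all $w_k>0$ and $r_k = \mathbb{E}[\|\nabla f(\theta^k)\|^2]\geq 0$, I would bound $\sum_{k=0}^{K-1} w_k r_k \geq \big(\min_{0\le k\le K-1} r_k\big)\sum_{k=0}^{K-1} w_k \geq K w_K \min_{0\le k\le K-1} r_k$. Substituting this lower bound into the displayed inequality and dividing both sides by $\bar{\rho} K w_K > 0$ produces exactly the stated result. I do not anticipate any genuine obstacle here; the only point that requires care is establishing the monotonicity of $\{w_k\}$ from the defining recursion, since this is what licenses replacing the partial sum $\sum_{k=0}^{K-1} w_k$ by the cleaner lower bound $K w_K$ in the denominator and thereby yields the clean form of the bound.
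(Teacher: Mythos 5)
Your proposal is correct and follows essentially the same route as the paper's proof: both arguments drop (or absorb) the non-negative term $\frac{w_{K-1}}{\alpha_{K-1}}\gamma_K$, establish that $\{w_k\}$ is non-increasing directly from its recursion via $\frac{\alpha_k}{\alpha_{k-1}}\le 1$ and $1+L_{\nabla f}\tilde{A}\alpha_k^2\ge 1$, and then replace the weighted sum $\sum_{k=0}^{K-1}w_k r_k$ by $Kw_K\min_{0\le k\le K-1}r_k$ before dividing by $\bar{\rho}Kw_K$. The only cosmetic difference is that the paper first divides by $W_K=\sum_{k=0}^{K-1}w_k$ and then bounds $1/W_K\le 1/(Kw_K)$, whereas you bound $\sum_{k=0}^{K-1}w_k\ge Kw_K$ directly; these are the same estimate.
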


\begin{proof}
    Starting from \cref{SGD_IBES_descent_lemma}, we have
    \begin{multline*}
        \frac{\bar{\rho}}{2} \sum_{k=0}^{K-1} w_k r_k \leq \frac{\bar{\rho}}{2} \sum_{k=0}^{K-1} w_k r_k + \frac{w_{K-1}}{\alpha_{K-1}} \gamma_{K} \\ \leq \frac{\mu}{\alpha_0} + \frac{(1+\eta)L_{\nabla f}}{2} \left(\frac{C}{\eta} \sum_{k=0}^{K-1} w_k \alpha_k+ \sum_{k=0}^{K-1} \epsilon_k^2 w_k \alpha_k \right) + \frac{\zeta}{2}\sum_{k=0}^{K-1}  \epsilon_k^2 w_k .
    \end{multline*}
Dividing both sides by $W_K \coloneqq \sum_{k = 0}^{K-1} w_k$ yields
\begin{multline}\label{temp1}
    \frac{\bar{\rho}}{2W_K} \sum_{k=0}^{K-1} w_k r_k \leq \frac{\mu}{\alpha_0 W_K} - \frac{w_{K-1}}{\alpha_{K-1} W_K} \gamma_{K} \\+ \frac{(1+\eta)L_{\nabla f}}{2W_k} \left(\frac{C}{\eta} \sum_{k=0}^{K-1} w_k \alpha_k+ \sum_{k=0}^{K-1} \epsilon_k^2 w_k \alpha_k \right) + \frac{\zeta}{2 W_K}\sum_{k=0}^{K-1}  w_k \epsilon_k^2.
\end{multline}
In order to simplify, since $W_K, \gamma_{K}$, $\alpha_{K-1}$ and all $w_k$ are non-negative, we know
\begin{equation}\label{temp2}
    \frac{\bar{\rho}}{2} \left (\min_{0\leq k\leq K-1} r_k\right ) = \frac{\bar{\rho}}{2W_K}\left( \min_{0\leq k\leq K-1} r_k \right) \sum_{k=0}^{K-1} w_k \leq  \frac{\bar{\rho}}{2W_K}\sum_{k=0}^{K-1} w_k r_k + \frac{w_{K-1}}{\alpha_{K-1} W_K} \gamma_{K}.     
\end{equation}
Utilizing \eqref{temp2} in \eqref{temp1} yields
\begin{equation} \label{temp3}
    \frac{\bar{\rho}}{2} \left (\min_{0\leq k\leq K-1} r_k \right ) \leq \frac{\mu}{\alpha_0 W_K} +  \frac{(1+\eta)L_{\nabla f}}{2W_K} \left(\frac{C}{\eta} \sum_{k=0}^{K-1} w_k \alpha_k+ \sum_{k=0}^{K-1} \epsilon_k^2 w_k \alpha_k \right) + \frac{\zeta}{2W_K}\sum_{k=0}^{K-1}  w_k \epsilon_k^2. 
\end{equation}
Since $\{\alpha_k\}_{k=0}^{\infty}$ is non-increasing, we have $\frac{\alpha_k}{\alpha_{k-1}}\leq 1$. Moreover, $\frac{1}{1+L_{\nabla f}\tilde{A}\alpha_k^2}\leq 1$. Hence,
$$w_k = w_{k-1} \left (\frac{\alpha_k}{\alpha_{k-1}}\right ) \left (\frac{1}{1+L_{\nabla f}\tilde{A}\alpha_k^2} \right ) \leq w_{k-1},$$
for all $k\geq 1$.
Utilizing the inequality above together with the definition of $W_K$, we find
\begin{equation}\label{temp4}
    \frac{1}{W_K} = \frac{1}{\sum_{k=0}^{K-1} w_k} \leq \frac{1}{\sum_{k=0}^{K-1} \min_{0\leq k \leq K-1} w_k} = \frac{1}{Kw_{K}}
\end{equation}
Using \eqref{temp4} in \eqref{temp3} and multiplying by $\frac{2}{\bar{\rho}}$ gives us the desired result.
\end{proof}

\begin{theorem}\label{convergence_thm}
    Let Assumptions \ref{assumption1}, \ref{assumption2}, \ref{assump:sequences}, and the bound on the initial step size as defined in \cref{SGD_IBES_descent_lemma} hold.
    Then there exists constants $C_1,C_2>0$ such that
    \begin{equation} \label{eq_conv_thm}
        \min_{0\leq k\leq K-1} \mathbb{E}\left[\|\nabla f(\theta^k)\|^2\right] \leq \frac{C_1}{K\alpha_K} + C_2 L_K,
    \end{equation}
    where $L_K\coloneqq \frac{1}{K\alpha_K}\sum_{k=0}^{K-1} \alpha_k \epsilon_k^2$.
    Moreover, if $\lim_{K\to\infty} L_K=0$, then
    $$\lim_{K\to \infty} \left ( \min_{0\leq k\leq K-1} \E[\|\nabla f(\theta^k)\|]\right ) = 0.$$
\end{theorem}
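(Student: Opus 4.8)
The plan is to start directly from the explicit bound in \cref{bound_f_expectation} and to simplify it into the advertised form by exploiting the structure of the weights $w_k$ together with \cref{assump:sequences}. Unrolling the recursion $w_k = \frac{w_{k-1}\alpha_k}{\alpha_{k-1}(1+L_{\nabla f}\tilde{A}\alpha_k^2)}$ gives the closed form
\begin{equation*}
w_k = \frac{\alpha_k}{\alpha_0}\,\frac{1}{\Pi_k}, \qquad \Pi_k \coloneqq \prod_{j=1}^{k}\bigl(1+L_{\nabla f}\tilde{A}\,\alpha_j^2\bigr),
\end{equation*}
with the convention $\Pi_0=1$. The decisive observation is that square-summability (\cref{assump:sequences}(i)) forces the infinite product to converge: since $\sum_{j}\alpha_j^2<\infty$, the standard fact that $\sum a_j<\infty$ implies $\prod(1+a_j)<\infty$ yields a finite limit $\Pi_\infty \coloneqq \lim_{k}\Pi_k$. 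As $\Pi_k$ is non-decreasing with $1\le\Pi_k\le\Pi_\infty$, I obtain the two-sided comparison $\frac{\alpha_k}{\alpha_0\Pi_\infty}\le w_k\le\frac{\alpha_k}{\alpha_0}$, so that $w_k$ is equivalent to $\alpha_k$ up to multiplicative constants. This is the step I expect to be the main obstacle, in the sense that it is where \cref{assump:sequences}(i) is used essentially; everything afterwards is bookkeeping.

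With this comparison in hand I would bound the prefactor via $\frac{1}{Kw_K}\le\frac{\alpha_0\Pi_\infty}{K\alpha_K}$ and then estimate each of the four summation terms of \cref{bound_f_expectation} separately. For the $\mu$-term and the $C$-term I use $\sum_{k=0}^{K-1}w_k\alpha_k\le\frac{1}{\alpha_0}\sum_{k=0}^{\infty}\alpha_k^2<\infty$, which makes both contributions proportional to $\frac{1}{K\alpha_K}$ and hence absorbed into $C_1$. For the remaining two terms, involving $\epsilon_k^2$, the upper bound $w_k\le\alpha_k/\alpha_0$ together with $\alpha_k\le\alpha_0$ converts $\sum w_k\alpha_k\epsilon_k^2$ and $\sum w_k\epsilon_k^2$ into multiples of $\sum_{k=0}^{K-1}\alpha_k\epsilon_k^2$, and after dividing by $K\alpha_K$ these become multiples of $L_K$, collected into $C_2$. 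Tracking constants gives, for example,
\begin{equation*}
C_1 = \frac{\Pi_\infty}{\bar{\rho}}\left(2\mu + \frac{(1+\eta)L_{\nabla f}\,C}{\eta}\sum_{k=0}^{\infty}\alpha_k^2\right), \qquad
C_2 = \frac{\Pi_\infty}{\bar{\rho}}\bigl((1+\eta)L_{\nabla f}\,\alpha_0 + \zeta\bigr),
\end{equation*}
which establishes \eqref{eq_conv_thm}.

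Finally, for the limiting statement I would argue that the right-hand side of \eqref{eq_conv_thm} vanishes: $\frac{1}{K\alpha_K}\to0$ is exactly \cref{assump:sequences}(ii), and $L_K\to0$ is the hypothesis, so $\min_{0\le k\le K-1}\mathbb{E}[\|\nabla f(\theta^k)\|^2]\to0$. To transfer this to the norm itself, I apply Jensen's inequality, $\mathbb{E}[\|\nabla f(\theta^k)\|]\le\sqrt{\mathbb{E}[\|\nabla f(\theta^k)\|^2]}$, and use monotonicity of the square root, which commutes with the minimum, to conclude
\begin{equation*}
\min_{0\le k\le K-1}\mathbb{E}[\|\nabla f(\theta^k)\|]\le\sqrt{\min_{0\le k\le K-1}\mathbb{E}[\|\nabla f(\theta^k)\|^2]}\xrightarrow[K\to\infty]{}0.
\end{equation*}
This completes the argument; the only genuinely analytic ingredient is the product-convergence bound on $w_k$, while the rest reduces to elementary estimates.
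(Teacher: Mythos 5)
Your proposal is correct and follows essentially the same route as the paper's proof: both start from \cref{bound_f_expectation}, unroll the recursion to the closed form $w_k = \frac{\alpha_k}{\alpha_0}\prod_{j=1}^{k}\bigl(1+L_{\nabla f}\tilde{A}\alpha_j^2\bigr)^{-1}$, use square-summability (\cref{assump:sequences}(i)) to bound the infinite product (your $\Pi_\infty$ is the paper's $P$), estimate the four sums exactly as you do, and conclude via \cref{assump:sequences}(ii) together with Jensen's inequality. The only cosmetic difference is that you record the constants $C_1$ and $C_2$ explicitly, which the paper leaves implicit in its final display.
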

\begin{proof}
    Starting from \cref{bound_f_expectation} we can write
    \begin{equation}\label{bound_r}
        \min_{0\leq k\leq K-1} r_k \leq \frac{1}{\bar{\rho}} \left( \frac{2\mu}{K\alpha_0 w_K} +  \frac{(1+\eta)L_{\nabla f}}{Kw_K} \left(\frac{C}{\eta} \sum_{k=0}^{K-1} w_k \alpha_k+ \sum_{k=0}^{K-1} \epsilon_k^2 w_k \alpha_k \right) + \frac{\zeta}{Kw_K}\sum_{k=0}^{K-1} \epsilon_k^2 w_k\right).
    \end{equation}
By definition of the weighting sequence we have
\begin{equation}\label{w_k}
    w_k =\frac{\alpha_k}{\alpha_0}\prod_{j=1}^k \frac{1}{1+L_{\nabla f}\tilde{A}\alpha_j^2},
\end{equation}
with the convention $\prod_{j=1}^{0} c_j \coloneqq 1$.
Recall that, for a sequence of positive real numbers $\{y_j\}_{j \in \mathbb{N}}$, $\prod_{k=0}^\infty y_k$ converges to a finite value if and only if $\sum_{k=0}^\infty \log (y_k)$ converges to a finite value. Let $Q \coloneqq \sum_{k=0}^\infty \alpha_k^2$, and so $Q <\infty$ by \cref{assump:sequences}(i). For $y>-1$, we know $\log (1+y)\leq y$, therefore
$$\sum_{k=1}^\infty \log (1+L_{\nabla f}\tilde{A}\alpha_k^2) \leq \sum_{k=1}^\infty L_{\nabla f}\tilde{A}\alpha_k^2 \leq L_{\nabla f}\tilde{A} Q.$$
It implies that $P \coloneqq \prod_{k = 0}^\infty (1+L_{\nabla f}\tilde{A}\alpha_k^2) <\infty$. Using the definition of $P$ and \eqref{w_k}, we have 
\begin{equation}\label{lim_Kw_k}
    \frac{1}{K w_K} = \frac{\alpha_0}{K \alpha_K} \prod_{j=1}^{k} \frac{1}{L_{\nabla f}\tilde{A} \alpha_j^2} \leq \frac{\alpha_0 P}{K\alpha_K}.
\end{equation}
Moreover, we know 
\begin{equation}\label{bound_Q}
   \sum_{k = 0}^{K-1} w_k\alpha_k \leq \sum_{k = 0}^\infty \frac{\alpha_k^2}{\alpha_0}\prod_{j = 1}^k \frac{1}{1 + L_{\nabla f}\tilde{A}\alpha_j^2} \leq \frac{1}{\alpha_0} \sum_{k = 0}^\infty \alpha_k^2 = \frac{Q}{\alpha_0} < \infty.
\end{equation}
For the term $\sum_{k=0}^{K-1} \epsilon_k^2w_k$, from \eqref{w_k} one can write
\begin{equation}\label{eps_w_k}
    \sum_{k=0}^{K-1} \epsilon_k^2w_k = \frac{1}{\alpha_0}\sum_{k=0}^{K-1}  {\alpha_k}\epsilon_k^2\prod_{j=1}^k \frac{1}{1+L_{\nabla f}\tilde{A}\alpha_j^2} \leq \frac{1}{\alpha_0} \sum_{k=0}^{K-1}  {\alpha_k}\epsilon_k^2.
\end{equation}
Lastly, for the term $\sum_{k=0}^{K-1} \epsilon_k^2 w_k \alpha_k$, since $\alpha_k \leq \alpha_0$, we find
\begin{equation}\label{eps_w_k_alpha}
    \sum_{k=0}^{K-1} \epsilon_k^2 w_k\alpha_k \leq \sum_{k=0}^{K-1}  {\alpha_k}\epsilon_k^2\prod_{j=1}^k \frac{1}{1+L_{\nabla f}\tilde{A}\alpha_j^2} \leq \sum_{k=0}^{K-1}  {\alpha_k}\epsilon_k^2.
\end{equation}
Utilizing \eqref{bound_Q}, \eqref{eps_w_k}, and \eqref{eps_w_k_alpha} on the right-hand side of \eqref{bound_r}, we find
\begin{equation}\label{final_bound}
    \min_{0\leq k\leq K-1} r_k \leq \frac{1}{\bar{\rho}} \left( \frac{2 \gamma_1(1+\alpha_0^2 L_{\nabla f}\tilde{A}) P}{K \alpha_K} +  \frac{(1+\eta)L_{\nabla f} \alpha_0 P}{K\alpha_K} \left(\frac{C Q}{\eta \alpha_0} + \sum_{k=0}^{K-1} \alpha_k \epsilon_k^2 \right) + \frac{\zeta P}{K \alpha_K}\sum_{k=0}^{K-1} \alpha_k \epsilon_k^2\right),
\end{equation}
which gives \eqref{eq_conv_thm}.
Using \cref{assump:sequences}(ii) and Jensen's inequality $$\E[\|\nabla f(\theta^k)\|]^2 \leq \E[\|\nabla f(\theta^k)\|^2] = r_k,$$ we conclude the second result.
\end{proof}

\subsection{Examples of decay regimes with corresponding convergence rates}

In this section, we provide a comprehensive theoretical analysis of the convergence properties of our algorithm. We establish convergence rates under different decay assumptions for the step size sequence $\{\alpha_k\}_{k=0}^\infty$ and approximation error sequence $\{\epsilon_k\}_{k=0}^\infty$.

Throughout our analysis, following the assumptions of \cref{convergence_thm}, we consider sequences 
$\{\alpha_k\}_{k=0}^\infty$ and $\{\epsilon_k\}_{k=0}^\infty$ satisfying the standard step size conditions in 
\cref{assump:sequences}. Our main theoretical objective is to ensure that $\lim_{K\to\infty} L_K=0$, as per \cref{convergence_thm}.



We analyze distinct scenarios based on the decay rates of $\{\alpha_k\}_{k = 0}^\infty$ and $\{\epsilon_k\}_{k = 0}^\infty$.

\subsubsection{Polynomial Decay for Step size and Accuracy Sequences}

\begin{proposition}\label{prop:poly_poly_rate}
Let $\alpha_k=\mathcal{O}(k^{-q})$ with $\tfrac12<q<1$ and 
$\epsilon_k=\mathcal{O}(k^{-p})$ for some $p>0$.  Then, for $K$ large enough, considering $L_K$ as defined in \cref{convergence_thm}, we have
$$
L_K = 
\begin{cases}
\mathcal{O}\big(K^{-\min\{2p,\,1-q\}}\big), & q+2p\neq 1,\\
\mathcal{O}\big(\tfrac{\log K}{K^{1-q}}\big), & q+2p=1.
\end{cases}
$$
In particular, $\lim_{K\to\infty} L_K = 0$ for every $p>0$ and $\tfrac12<q<1$.
\end{proposition}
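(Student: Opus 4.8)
The plan is to reduce everything to the asymptotics of a single $p$-series partial sum. Since the claim concerns orders of magnitude, I read $\alpha_k=\mathcal{O}(k^{-q})$ as a two-sided bound $c_1 k^{-q}\le \alpha_k \le c_2 k^{-q}$ for $k$ large, where the lower bound is what makes the factor $1/(K\alpha_K)$ meaningful, and I use $\epsilon_k \le c_3 k^{-p}$, which suffices because $\epsilon_k$ enters only through the numerator. Multiplying these gives $\alpha_k\epsilon_k^2 \le C\, k^{-(q+2p)}$ for some constant $C$ and all large $k$, so the behaviour of $L_K$ is governed entirely by the exponent $s\coloneqq q+2p$.

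First I would isolate the two pieces of $L_K=\frac{1}{K\alpha_K}\sum_{k=0}^{K-1}\alpha_k\epsilon_k^2$. The prefactor is controlled by the lower bound on $\alpha_K$, namely $\frac{1}{K\alpha_K}\le \frac{1}{c_1}K^{q-1}$. For the sum I would absorb the finitely many small-$k$ terms (including $k=0$) into an additive constant and then estimate $\sum_{k=1}^{K-1}k^{-s}$ by comparison with $\int_1^{K} t^{-s}\,dt$. This yields the standard trichotomy for partial sums of a $p$-series: the sum is $\mathcal{O}(K^{1-s})$ when $s<1$, $\mathcal{O}(\log K)$ when $s=1$, and $\mathcal{O}(1)$ when $s>1$.

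Next I would multiply the two estimates. When $s<1$ (equivalently $2p<1-q$), $L_K\lesssim K^{q-1}\cdot K^{1-s}=K^{-2p}$; when $s>1$ (equivalently $2p>1-q$), the sum is bounded and $L_K\lesssim K^{q-1}=K^{-(1-q)}$; and when $s=1$, $L_K\lesssim K^{q-1}\log K=K^{-(1-q)}\log K$. The first two cases combine into $\mathcal{O}\big(K^{-\min\{2p,\,1-q\}}\big)$ precisely because the sign of $s-1$ decides which of $2p$ and $1-q$ is smaller, while the boundary $s=1$ contributes the logarithmic factor, matching the stated dichotomy. Finally, $\lim_{K\to\infty}L_K=0$ follows in each case from $p>0$ and $q<1$, since both $K^{-2p}\to 0$ and $K^{-(1-q)}\log K\to 0$; note that $\tfrac12<q<1$ is exactly what keeps the exponent $q-1$ negative and is also what makes \cref{assump:sequences} hold.

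I expect the only genuine subtlety to be bookkeeping: making the two-sided reading of $\alpha_k$ explicit so that $1/(K\alpha_K)$ is legitimately $\mathcal{O}(K^{q-1})$, and checking that the cancellation in the regime $s<1$ really produces the clean exponent $-2p$ rather than an unsimplified $-(s-q)$. The boundary case $s=1$ needs only the elementary harmonic-sum bound $\sum_{k=1}^{K-1}k^{-1}=\mathcal{O}(\log K)$. Since no step requires more than the integral comparison, there is no serious analytic obstacle; the work lies in organizing the three regimes into the single $\min$ expression.
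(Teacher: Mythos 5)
Your proposal is correct and follows essentially the same route as the paper: bound $\alpha_k\epsilon_k^2 \le C\,k^{-(q+2p)}$ for large $k$, absorb the early terms into a constant, estimate the partial sum $\sum_k k^{-(q+2p)}$ by integral comparison, and split into the three cases $q+2p<1$, $q+2p=1$, $q+2p>1$ to obtain the $\min$ expression and the logarithmic boundary case. The one point where you are more careful than the paper is in explicitly reading $\alpha_k=\mathcal{O}(k^{-q})$ as a two-sided bound so that $1/(K\alpha_K)=\mathcal{O}(K^{q-1})$ is legitimate; the paper uses this lower bound on $\alpha_K$ implicitly when it writes $K\alpha_K=\mathcal{O}(K^{1-q})$ and then divides by it.
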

\begin{proof}
By the assumptions, there exist constants $C>0$ and $K_0\in\mathbb{N}$ such that for all $k\ge K_0$
$$
\alpha_k\epsilon_k^2 \le C\,k^{-(q+2p)} .
$$
Hence for $K\ge K_0$,
$$
\sum_{k=K_0}^{K-1} \alpha_k\epsilon_k^2 \le C\sum_{k=K_0}^{K-1} k^{-(q+2p)}.
$$
Apply the integral-comparison bounds
$$
\int_{K_0}^{K-1} t^{-(q+2p)}\,dt \le \sum_{k=K_0}^{K-1} k^{-(q+2p)} \le 1 + \int_{K_0}^{K-1} t^{-(q+2p)}\,dt,
$$
and consider three cases.

\medskip\noindent\textbf{Case 1: $q+2p>1$.} The integral (and series) converges, so
$$
\sum_{k=K_0}^{K-1} \alpha_k\epsilon_k^2 = O(1).
$$
Using the assumption $\alpha_K=\mathcal O(K^{-q})$ we have $K\alpha_K=\mathcal O(K^{1-q})$, hence
$$
L_K=\frac{1}{K\alpha_K}\left(\sum_{k=0}^{K_0-1} \alpha_k \epsilon_k^2 + \sum_{k=K_0}^{K-1} \alpha_k\epsilon_k^2\right) = O\!\big(K^{-(1-q)}\big),
$$
and therefore $\lim_{K\to\infty}L_K=0$.

\medskip\noindent\textbf{Case 2: $q+2p=1$.} Then
$$
\sum_{k=K_0}^{K-1} \alpha_k\epsilon_k^2 =\sum_{k=K_0}^{K-1} k^{-(q+2p)} = O(\log K),
$$
and therefore
$$
L_K = O\!\left(\frac{\log K}{K\alpha_K}\right) = O\!\left(\frac{\log K}{K^{1-q}}\right).
$$
Since $1-q>0$ we conclude $\lim_{K\to\infty}L_K=0$.

\medskip\noindent\textbf{Case 3: $q+2p<1$.} The integral behaves like $K^{1-(q+2p)}$, so
$$
\sum_{k=K_0}^{K-1} \alpha_k\epsilon_k^2 =\sum_{k=K_0}^{K-1} k^{-(q+2p)} = O\!\big(K^{1-(q+2p)}\big),
$$
Dividing by $K\alpha_K = O(K^{1-q})$ gives
$$
L_K = O\!\left(\frac{K^{1-(q+2p)}}{K^{1-q}}\right) = O(K^{-2p}),
$$
and thus $\lim_{K\to\infty}L_K=0$.

\medskip
Collecting the three cases combined with the fact that $\tfrac{1}{2}<q<1$ yields the claimed piecewise estimates. 
\end{proof}
\subsubsection{Polynomial Decay for $\{\alpha_k\}_{k = 0}^\infty$ and Logarithmic Decay for $\{\epsilon_k\}_{k = 0}^\infty$}
\begin{proposition}\label{prop:poly_log_rate}
Let $\alpha_k=\mathcal{O}(k^{-q})$ with $\tfrac12<q<1$ and 
$\epsilon_k=(\log k)^{-p}$ for $k\ge 2$ and some $p>0$. Then, for $K$ large enough,
$$
\;=\; \mathcal{O}\!\big((\log K)^{-2p}\big).
L_K \;=\; \mathcal{O}\!\big((\log K)^{-2p}\big).
$$
Consequently $\lim_{K\to\infty} L_K = 0$ for every $p>0$.
\end{proposition}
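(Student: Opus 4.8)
The plan is to follow the same scaffolding as the proof of \cref{prop:poly_poly_rate}: use the $\mathcal{O}$-hypotheses to produce a constant $C>0$ and an index $K_0$ with $\alpha_k\epsilon_k^2 \le C\,k^{-q}(\log k)^{-2p}$ for $k\ge K_0$, split the defining sum of $L_K$ into a finite initial block $\sum_{k=0}^{K_0-1}\alpha_k\epsilon_k^2 = \mathcal{O}(1)$ and the tail $\sum_{k=K_0}^{K-1} k^{-q}(\log k)^{-2p}$, and then control the tail by comparison with the integral $I(K)\coloneqq \int_{K_0}^K t^{-q}(\log t)^{-2p}\,dt$. Since $t\mapsto t^{-q}(\log t)^{-2p}$ is decreasing for $t>1$ (its derivative equals $-t^{-q-1}(\log t)^{-2p-1}\,[\,q\log t + 2p\,]<0$), the usual sum–integral comparison gives $\sum_{k=K_0}^{K-1} k^{-q}(\log k)^{-2p} \le f(K_0) + I(K)$. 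Dividing by $K\alpha_K = \mathcal{O}(K^{1-q})$ at the end will send the $\mathcal{O}(1)$ pieces to zero, so everything reduces to sharply estimating $I(K)$.

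The crux is that the naive estimate, replacing $(\log t)^{-2p}$ by its maximum $(\log K_0)^{-2p}$ on the range, only yields $I(K)=\mathcal{O}(K^{1-q})$ and hence $L_K=\mathcal{O}(1)$, which is too weak. To recover the factor $(\log K)^{-2p}$ I would integrate by parts with $u=(\log t)^{-2p}$ and $dv=t^{-q}\,dt$, giving
\begin{equation*}
I(K) = \frac{K^{1-q}(\log K)^{-2p}}{1-q} - \frac{K_0^{1-q}(\log K_0)^{-2p}}{1-q} + \frac{2p}{1-q}\int_{K_0}^{K} t^{-q}(\log t)^{-2p-1}\,dt.
\end{equation*}
The leading term already exhibits the desired $K^{1-q}(\log K)^{-2p}$ behaviour; the task is to show the remainder integral does not spoil it.

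The remainder integral is handled by a self-bounding argument: on $[K_0,K]$ one has $(\log t)^{-1}\le (\log K_0)^{-1}$, so $\int_{K_0}^{K} t^{-q}(\log t)^{-2p-1}\,dt \le (\log K_0)^{-1} I(K)$. Substituting this back yields $I(K)\le \frac{1}{1-q}K^{1-q}(\log K)^{-2p} + \frac{2p}{(1-q)\log K_0}\,I(K)$, and choosing $K_0$ large enough that $\log K_0 > \frac{2p}{1-q}$ lets me absorb the $I(K)$ term on the right, giving $I(K)\le C'\,K^{1-q}(\log K)^{-2p}$ for an explicit $C'=\big[(1-q)\big(1-\tfrac{2p}{(1-q)\log K_0}\big)\big]^{-1}$. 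Combining this with the $\mathcal{O}(1)$ initial block and dividing by $K\alpha_K=\mathcal{O}(K^{1-q})$ delivers $L_K=\mathcal{O}((\log K)^{-2p})$, and since $2p>0$ the limit is zero. I expect the main obstacle to be exactly this refinement, namely extracting the slowly varying factor $(\log K)^{-2p}$ rather than settling for $\mathcal{O}(1)$; the integration-by-parts-plus-absorption device (a hands-on substitute for Karamata's theorem on integrals of regularly varying functions) is what makes it go through, and it hinges on being free to take $K_0$ as large as needed, which is harmless since it only affects the $\mathcal{O}(1)$ constants.
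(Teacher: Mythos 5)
Your proof is correct, and it shares the paper's overall skeleton: bound $\alpha_k\epsilon_k^2$ by $C\,k^{-q}(\log k)^{-2p}$ for $k\ge K_0$, reduce the tail sum to the integral $I(K)=\int_{K_0}^{K} t^{-q}(\log t)^{-2p}\,dt$ by monotone comparison, integrate by parts once, and absorb the remainder integral back into $I(K)$ once the absorption coefficient is below $1$. The execution differs in two ways, one of which is substantive. First, the paper substitutes $u=\log t$ and $v=(1-q)u$ to rewrite the integral as $J(V)=\int_a^V e^{v}v^{-2p}\,dv$ with $a=(1-q)\log K_0$ and $V=(1-q)\log(K-1)$ before integrating by parts, whereas you integrate by parts directly in the $t$ variable; this is cosmetic. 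Second, and more importantly, the absorption steps are not equivalent: the paper asserts $v^{-2p-1}\le V^{-1}v^{-2p}$ for all $v\in[a,V]$, which is false as stated, since the extra factor $v^{-1}$ attains its maximum at the \emph{lower} endpoint $a$, not at $V$ (the claimed inequality would require $v\ge V$). Your version bounds the extra factor at the lower endpoint, $(\log t)^{-1}\le(\log K_0)^{-1}$ on $[K_0,K]$, which is the correct direction, and you then force the coefficient $\tfrac{2p}{(1-q)\log K_0}$ below $1$ by enlarging $K_0$ --- harmless, as you note, since $K_0$ only affects the $\mathcal{O}(1)$ initial block. Translated into the paper's variables, your choice is exactly the repair its argument needs: replace $V^{-1}$ by $a^{-1}$ and require $a>2p$, i.e., $\log K_0>2p/(1-q)$. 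So your proof is not merely a valid alternative; it fixes a genuine flaw in the paper's own absorption step. (One shared caveat: both you and the paper divide by $K\alpha_K$ and invoke $\alpha_K=\mathcal{O}(K^{-q})$ as if it were two-sided; concluding $L_K=\mathcal{O}\big((\log K)^{-2p}\big)$ from the bound on the numerator actually requires a matching lower bound $\alpha_K\ge c\,K^{-q}$, i.e., reading the hypothesis of \cref{prop:poly_log_rate} as a $\Theta$-type decay, which is evidently the intended convention.)
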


\begin{proof}
Let $K_0\ge 2$ be such that for all $k\ge K_0$ we have
$$
\alpha_k\epsilon_k^2 \le C_0\,k^{-q}(\log k)^{-2p}
$$
for some constant $C_0>0$. It suffices to bound
$$
S_K \coloneqq \sum_{k=K_0}^{K-1} k^{-q}(\log k)^{-2p},
$$
since $\sum_{k=2}^{K-1} \alpha_k\epsilon_k^2 \le C_0 S_K$ for $K\ge K_0$.

By the integral comparison test,
$$
\int_{K_0}^{K-1} t^{-q}(\log t)^{-2p}\,dt \le S_K \le 1 + \int_{K_0}^{K-1} t^{-q}(\log t)^{-2p}\,dt.
$$
Hence it is enough to show
$$
I(K)\coloneqq\int_{K_0}^{K-1} t^{-q}(\log t)^{-2p}\,dt = O\!\big(K^{1-q}(\log K)^{-2p}\big).
$$

Perform the substitution $u=\log t$ (so $t=e^u$, $dt=e^u du$). Then
$$
I(K)=\int_{\log K_0}^{\log (K-1)} e^{(1-q)u} u^{-2p}\,du.
$$
Set $V\coloneqq(1-q)\log (K-1)$ and $a\coloneqq(1-q)\log K_0$ and change variable $v=(1-q)u$ (so $dv=(1-q)du$). This yields
$$
I(K) = (1-q)^{-(1-2p)}\int_{a}^{V} e^{v} v^{-2p}\,dv.
$$
Write
$$
J(V)\coloneqq\int_a^V e^{v} v^{-2p}\,dv.
$$

Integrate by parts once with $g(v)=v^{-2p}$ and $h'(v)=e^{v}$ to obtain
$$
J(V) = e^{V}V^{-2p} - e^{a}a^{-2p} + 2p\int_a^V e^{v} v^{-2p-1}\,dv.
$$
For $V$ large enough we have $v^{-2p-1}\le V^{-1} v^{-2p}$ for all $v\in[a,V]$. Hence
$$
\int_a^V e^{v} v^{-2p-1}\,dv \le V^{-1}\int_a^V e^{v} v^{-2p}\,dv = V^{-1} J(V).
$$
Combining with the integration-by-parts identity yields, for large \(V\),
$$
J(V) \le e^{V}V^{-2p} + C_a + 2p V^{-1} J(V),
$$
where \(C_a \coloneqq e^{a}a^{-2p}\) is a fixed constant. Rearranging gives
$$
\big(1 - 2p V^{-1}\big) J(V) \le e^{V}V^{-2p} + C_a.
$$
Choose \(V_0\) sufficiently large so that \(1 - 2p V^{-1} \ge \tfrac{1}{2}\) for all \(V\ge V_0\). For such \(V\),
$$
J(V) \le 2\big(e^{V}V^{-2p} + C_a\big) \le C_1 e^{V}V^{-2p}
$$
for some constant \(C_1>0\). Tracing back the substitutions, we obtain for large \(K\)
$$
I(K) = (1-q)^{-(1-2p)} J(V) \le C_2 K^{1-q}(\log (K-1))^{-2p}
$$
for some constant \(C_2>0\). Therefore
$$
S_K = O\!\big(K^{1-q}(\log K)^{-2p}\big).
$$

Finally, since $\alpha_K=\mathcal{O}(K^{-q})$ we have $K\alpha_K=\mathcal{O}(K^{1-q})$, and hence
$$
L_K = \frac{1}{K\alpha_K}(\alpha_0 \epsilon_0^2 + \alpha_1 \epsilon_1^2 + \sum_{k=2}^K \alpha_k\epsilon_k^2)
\le \frac{\alpha_0 \epsilon_0^2 + \alpha_1 \epsilon_1^2 + C_0 S_K}{K\alpha_K} = O\!\big((\log K)^{-2p}\big).
$$
As $p>0$ this implies $\lim_{K\to\infty} L_K = 0$, completing the proof.
\end{proof}

Now, we can establish the rate of convergence with different options for accuracy and step size.
\begin{corollary}\label{convergence_rate}
Let Assumptions \ref{assumption1}, \ref{assumption2}, and \ref{assump:sequences} hold and a suitable initial step size $\alpha_0>0$ is chosen as guided by \cref{SGD_IBES_descent_lemma}.

\begin{enumerate}
\item \textbf{Polynomial step size and polynomial error.}  
If $\alpha_k = \mathcal{O}(k^{-q})$ for $\tfrac{1}{2}<q<1$ and $\epsilon_k = \mathcal{O}(k^{-p})$ for $p>0$, then
\begin{equation}\label{rate_poly_poly}
\min_{0\leq k\leq K-1} \E[\|\nabla f(\theta^k)\|] 
= 
\begin{cases}
\mathcal{O}\!\big(K^{-\min\{2p,\,1-q\}/2}\big), & q+2p\neq1,\\
\mathcal{O}\!\Big(\sqrt{\dfrac{\log K}{K^{1-q}}}\Big), & q+2p=1.
\end{cases}
\end{equation}

\item \textbf{Polynomial step size and logarithmic error.}  
If $\alpha_k = \mathcal{O}(k^{-q})$ for $\tfrac{1}{2}<q<1$ and $\epsilon_k = \mathcal{O}((\log k)^{-p})$ for $p>0$, then
\begin{equation}\label{rate_poly_log}
\min_{0\leq k\leq K-1} \E[\|\nabla f(\theta^k)\|] 
= \mathcal{O}\!\big((\log K)^{-p}\big).
\end{equation}
\end{enumerate}
\end{corollary}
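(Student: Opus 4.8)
The plan is to assemble the corollary directly from the convergence bound of \cref{convergence_thm} together with the asymptotic estimates for $L_K$ established in \cref{prop:poly_poly_rate} and \cref{prop:poly_log_rate}. Recall that \cref{convergence_thm} provides
$$\min_{0\le k\le K-1} r_k \;\le\; \frac{C_1}{K\alpha_K} + C_2 L_K,$$
where $r_k = \E[\|\nabla f(\theta^k)\|^2]$. It therefore remains to control both terms on the right-hand side and then pass from the squared gradient norm to the gradient norm via Jensen's inequality.

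First I would bound the leading term. Since $\alpha_k = \mathcal{O}(k^{-q})$ gives $K\alpha_K = \mathcal{O}(K^{1-q})$, we have $\frac{C_1}{K\alpha_K} = \mathcal{O}(K^{-(1-q)})$ in both parts of the statement. For part (1), I would insert the piecewise estimate for $L_K$ from \cref{prop:poly_poly_rate}. Because $\min\{2p,\,1-q\}\le 1-q$, the $L_K$ term dominates the $\frac{C_1}{K\alpha_K}$ contribution whenever $q+2p\neq 1$, yielding $\min_{0\le k\le K-1} r_k = \mathcal{O}(K^{-\min\{2p,\,1-q\}})$; in the boundary case $q+2p=1$ the logarithmic factor carried by $L_K$ makes $\mathcal{O}\big(\tfrac{\log K}{K^{1-q}}\big)$ the dominant term. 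For part (2), \cref{prop:poly_log_rate} gives $L_K = \mathcal{O}((\log K)^{-2p})$, which decays strictly more slowly than any positive power of $K$ and hence overrides the polynomial term $K^{-(1-q)}$, so $\min_{0\le k\le K-1} r_k = \mathcal{O}((\log K)^{-2p})$.

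The final step in each case is to apply Jensen's inequality, $\E[\|\nabla f(\theta^k)\|]^2 \le r_k$, and take square roots. Since the square root is monotone, $\min_{0\le k\le K-1} \E[\|\nabla f(\theta^k)\|] \le \sqrt{\min_{0\le k\le K-1} r_k}$, so each rate on $r_k$ transfers to the gradient norm with its exponent halved: $K^{-\min\{2p,\,1-q\}/2}$, $\sqrt{\log K / K^{1-q}}$, and $(\log K)^{-p}$, respectively. This reproduces \eqref{rate_poly_poly} and \eqref{rate_poly_log} exactly.

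I do not expect any genuine obstacle, since the corollary is essentially bookkeeping: the real analytic work was already carried out in \cref{prop:poly_poly_rate} and \cref{prop:poly_log_rate}. The only point requiring care is the case analysis for which of the two terms dominates—in particular, recognizing that the logarithmically decaying $L_K$ in part (2) overwhelms the polynomially decaying $\frac{C_1}{K\alpha_K}$, and that the boundary case $q+2p=1$ in part (1) is the one where the logarithmic correction survives.
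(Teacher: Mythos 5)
Your proposal is correct and follows essentially the same route as the paper: combine the bound of \cref{convergence_thm} with the $L_K$ estimates of \cref{prop:poly_poly_rate} and \cref{prop:poly_log_rate}, observe that the $L_K$ term dominates $C_1/(K\alpha_K) = \mathcal{O}(K^{-(1-q)})$ in every case, and pass to the gradient norm via Jensen's inequality. The paper's proof is just a terser version of this bookkeeping (it asserts $\min_k r_k = \mathcal{O}(L_K)$ directly and cites the same three ingredients), so your explicit case-by-case domination argument simply fills in what the paper leaves implicit.
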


\begin{proof}
From \cref{convergence_thm} and \cref{assump:sequences}(ii) we have
    $$
\min_{0\le k\le K-1}\mathbb{E}\|\nabla f(\theta^k)\|^2 = \mathcal{O}\!\big(L_K\big).
$$
Now utilizing Jensen's inequality, \cref{prop:poly_poly_rate} and \cref{prop:poly_log_rate}, we conclude the required result.
\end{proof}
\paragraph{Interpretation and guideline for parameter choice.}
\begin{itemize}
    \item If $2p < 1-q$ the hypergradient accuracy limits the convergence and the gradient-rate is $\mathcal{O}(k^{-p})$.
    \item If $2p > 1-q$ the step size decay limits the convergence and the gradient-rate is $\mathcal{O}(k^{-(1-q)/2})$.
    \item On the boundary $2p=1-q$ a logarithmic factor appears: the gradient-rate is $\mathcal{O}\!\big(\sqrt{\log k / k^{1-q}}\big)$.
    \item The best asymptotic rate achievable within these families is \(\mathcal{O}(k^{-1/4})\), obtained by taking \(p\ge 1/4\) and letting \(q\downarrow 1/2\) (i.e., choose \(q\) arbitrarily close to \(1/2\) from above and \(p\ge 1/4\)).
\end{itemize}

\paragraph{Summary of rates}
A comparison of convergence rates under different accuracy (polynomial and logarithmic) and step size (polynomial) schedules is presented in \cref{rate_table}.
\begin{table}[h!]
\centering
\begin{tabular}{lll l}
\toprule
\textbf{Step size $\alpha_k$} & \textbf{Accuracy $\epsilon_k$} & \textbf{Gradient rate} & \textbf{Note} \\
\midrule
\multirow{4}{*}{$\mathcal{O}(k^{-q}),\; \tfrac{1}{2}<q<1$} 
 & $\mathcal{O}(k^{-p}),\; 2p<1-q$ 
 & $\mathcal{O}(k^{-p})$ 
 & limited by accuracy \\[3pt]

 & $\mathcal{O}(k^{-p}),\; 2p=1-q$ 
 & $\mathcal{O}\!\big(\sqrt{\tfrac{\log k}{k^{1-q}}}\big)$ 
 & boundary: slower with $\log k$ \\[3pt]

 & $\mathcal{O}(k^{-p}),\; 2p>1-q$ 
 & $\mathcal{O}(k^{-(1-q)/2})$ 
 & limited by step size \\[3pt]
 
 & $\mathcal{O}((\log k)^{-p})$ 
 & $\mathcal{O}((\log k)^{-p})$ 
 & any $p>0$; slowest \\
\bottomrule
\end{tabular}
\caption{Convergence rates for different step size and accuracy decay regimes.}
\label{rate_table}
\end{table}
Note that, as one can see in \cref{rate_table}, even in cases where the rate does not depend on the accuracy schedule exponent $p$, its choice remains crucial in practice. In particular, we are interested in making $p$ as small as possible, since it directly affects the computational cost.

\section{Numerical experiments}
\label{sec:experiments}
Throughout our numerical experiments, based on \cref{convergence_rate}, we select accuracy decay exponents $p \in \{0, 0.25, 0.5, 1, 2\}$ and step size decay exponents $q \in \{0, 0.5, 1\}$. The theoretically optimal choice is $(p, q) = (0.25, 0.5)$, which achieves the best convergence rate of $\mathcal{O}(k^{-1/4})$. Note that $q = 0$ (fixed step size) only guarantees convergence to a neighborhood of the solution rather than exact convergence \cite{salehi2024inexactstochastic}, though we include it to assess practical performance under finite computational budgets.

Across all experiments, we use Nonmonotone Accelerated Proximal Gradient Descent (nmAPG) \cite{nmAPG} to solve the lower-level problems. Since the lower-level objectives are smooth, we replace the proximal updates with standard gradient steps. Moreover, we use the Conjugate Gradient (CG) method to approximate the inverse Hessian of the lower-level objective through matrix–vector products computed via its corresponding linear system.

All of the codes are available on the GitHub repository\footnote{\url{https://github.com/MohammadSadeghSalehi/Inexact_stochastic_bilevel_learning/}}\textcolor{red}{(The repository will be made public upon acceptance of this manuscript)}. Moreover, parts of the code and the general structure for computing hypergradients are adapted from \cite{hertrich2025}\footnote{\url{https://github.com/johertrich/LearnedRegularizers}}.
\subsection{Denoising with Convex Ridge Regularizer}\label{subsec:denoise_CRR}

In this section, we perform image denoising task on colored images from the BSDS500 dataset \cite{BSDS500}. Each image is  augmented (by random and potentially overlapping crops) to $64 \times 64$ pixels. We consider 128 images in mini-batches of size 8 for training, and 16 images of size $192 \times 192$ (cropped to the center) as the test set. We then learn the parameters of a Convex Ridge Regularizer (CRR) \cite{goujon2022CRR} regularizer by solving the following bilevel optimization problem:

\begin{subequations}\label{Denoising_bilevel}
\begin{align}
    \min_\theta &\quad \frac{1}{m}\sum_{i=1}^{m} \|\hat{x}_i(\theta) - x_i^*\|_2^2 , \label{Upper_stochastic_denoise} \\
    \text{s.t.} \quad \hat{x}_i(\theta) &\coloneqq \arg\min_{x\in \mathbb{R}^n} \left\{ \|x - y_i\|_2^2 + R_\theta(x) \right\}, \quad i = 1, 2, \dots, m, \label{lower_stochastic_denoise}
\end{align}
\end{subequations}
where \( x_i^* \) denotes the ground truth image, and \( y_i \) is the corresponding noisy observation corrupted by additive Gaussian noise:
$y_i = x_i^* + \eta_i$, $\eta_i \sim \mathcal{N}(0, \sigma^2 I)$, $\sigma = 25/255.$

As a data-adaptive choice for the regularizer $ R_\theta(x) $ in imaging problems, the Field of Experts (FoE) architecture \cite{FoE, FoE_Pock} is implemented as a composition of convolutional layers followed by smooth, convex potential functions. Following the advancements of this architecture and the convention introduced in \cite{goujon2022CRR,goujon2023WCRR}, we consider the CRR defined as:
$$
R_\theta(x) = \sum_{i=1}^{C} \sum_{j=1}^{H} \sum_{k=1}^{W} \left[ \psi^\beta\left( \exp(s) \odot \mathcal{W}(x) \right) \right]_{i,j,k},
$$
where $ \mathcal{W} $ is a multi-layer convolution operator composed of multiple convolution layers, $ \psi^\beta \colon \mathbb{R} \to \mathbb{R}^+ $ is a smooth convex potential, \( s \) is a learnable scaling parameter, \( \odot \) denotes the element-wise product, and $C, H,W$ represent the channels ($1$ for grayscale and $3$ for colored images), height, and width of images, respectively.

The convolution operator \( \mathcal{W} \) is composed of three zero-padded convolutional layers with kernel sizes \( 5 \times 5 \), increasing the number of output channels as \( 4 \rightarrow 8 \rightarrow 64 \). Each convolutional kernel is reparametrized to have zero mean and is initialized using zero mean Xavier normal initialization. To ensure scale-invariance and stability, the composite convolutional operator is normalized via an estimate of its Lipschitz constant computed either using power iterations or in the Fourier domain. The resulting operator is further constrained to unit spectral norm using spectral normalization \cite{goujon2023WCRR}.

The potential function $ \psi^\beta $ is shared across all filters and scaled via the learnable factor $ \alpha_j $ to obtain individual channel-wise potentials. Two convex choices for $ \psi^\beta $ include:
\begin{itemize}
    \item The Huber function (Moreau envelope of the $ \ell_1 $-norm):
    $$
    \psi^\beta(u) = \begin{cases}
        |u| - \frac{1}{2\beta}, & |u| > \frac{1}{\beta}, \\
        \frac{\beta}{2} u^2, & |u| \leq \frac{1}{\beta},
    \end{cases}
    $$
    \item The log-cosh function:
    $$
    \psi^\beta(u) = \frac{1}{\beta} \log(\cosh(\beta u)).
    $$
\end{itemize}
These functions ensure convexity of $ \psi^\beta $, and therefore of \( R_\theta \).

The gradient of \( R_\theta \) with respect to \( x \) is computed via the chain rule:
$$
\nabla R_\theta(x) = \mathcal{W}^\top \left[ \psi'^\beta\left( \exp(s) \odot \mathcal{W}(x) \right) \right] \odot \exp(s),
$$
where \( \mathcal{W}^\top \) denotes the adjoint (transpose) of the convolution operator, implemented as a sequence of transposed convolutions with shared filters, and $\odot$ denotes the element-wise (Hadamard) product. This  regularizer is compatible with our required regularity assumptions on the lower-level problem.

In this experiment, we vary the initial accuracies $\epsilon_0 \in \{10^0, 10^{-2}, 10^{-4}\}$, initial step sizes $\alpha_0 \in \{10^{-2}, 10^{-3}\}$, and the exponents for the accuracy ($k^{-p}$) and step size schedules ($k^{-q}$),  $p \in \{0, 0.25, 0.5, 1, 2\}$ and $q \in \{0, 0.5, 1\}$. We first compare the performance of ISGD for each initial accuracy separately, as shown in ~\cref{fig:1e0_1e-2_1e-4_denoise}. We display only configurations that achieved reasonable training performance in terms of training convergence behavior, filtering out divergent or severely suboptimal settings. Selected marginal or divergent configurations based on \cref{convergence_thm} and \cref{convergence_rate} are presented to justify the reasonableness of the chosen settings.
\begin{figure}[tbhp]
\centering
\begin{minipage}[b]{0.02\textwidth}
    \centering\rotatebox{90}{\footnotesize \hspace{45pt}$\epsilon_0 = 1$}
\end{minipage}
\begin{minipage}[b]{0.97\textwidth}
    \subfloat[]{%
        \label{fig:1e0_fixed}
        \includegraphics[width=0.325\textwidth]{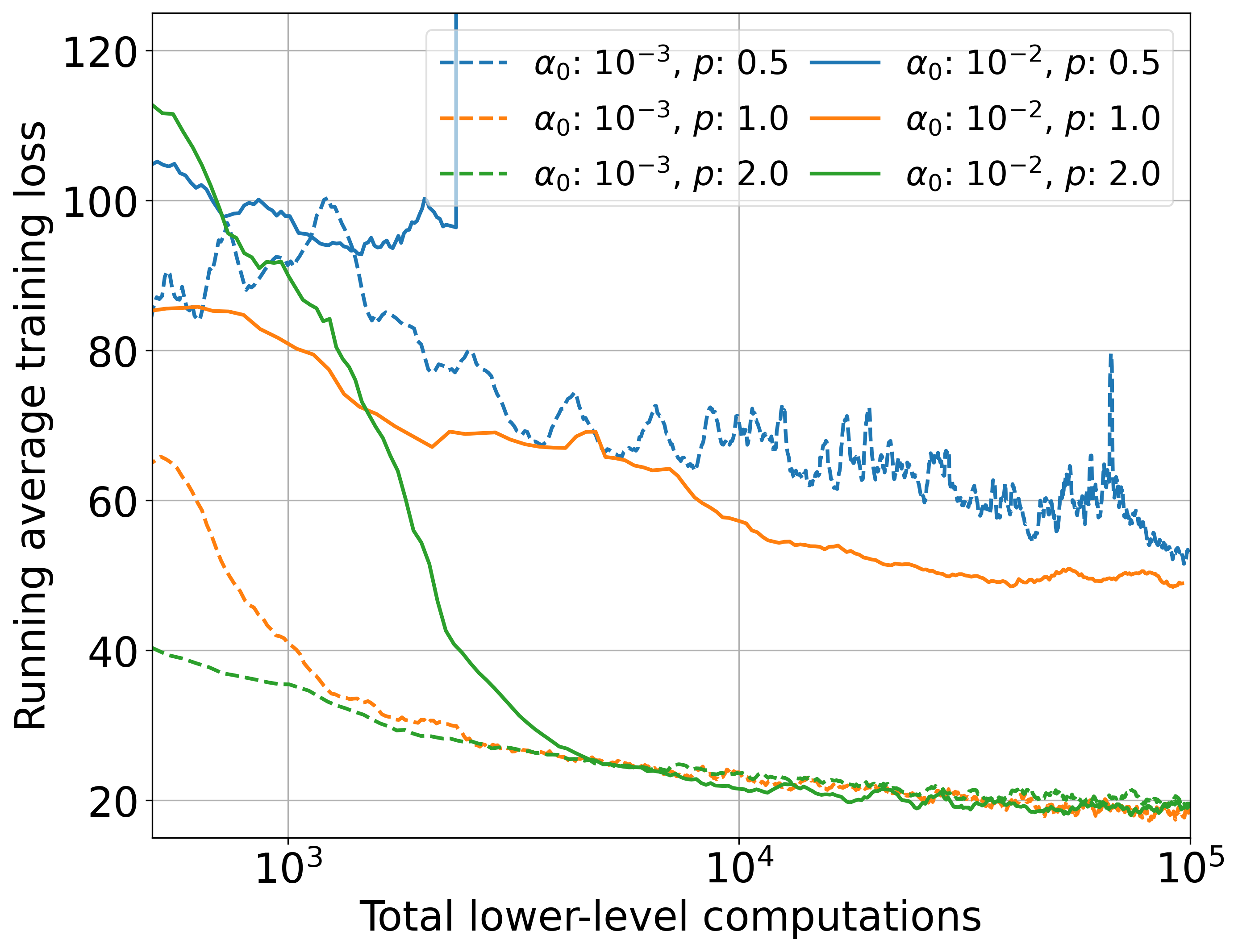}
    }\hspace{-10pt}
    \subfloat[]{%
        \label{fig:1e0_decrease}
        \includegraphics[width=0.325\textwidth]{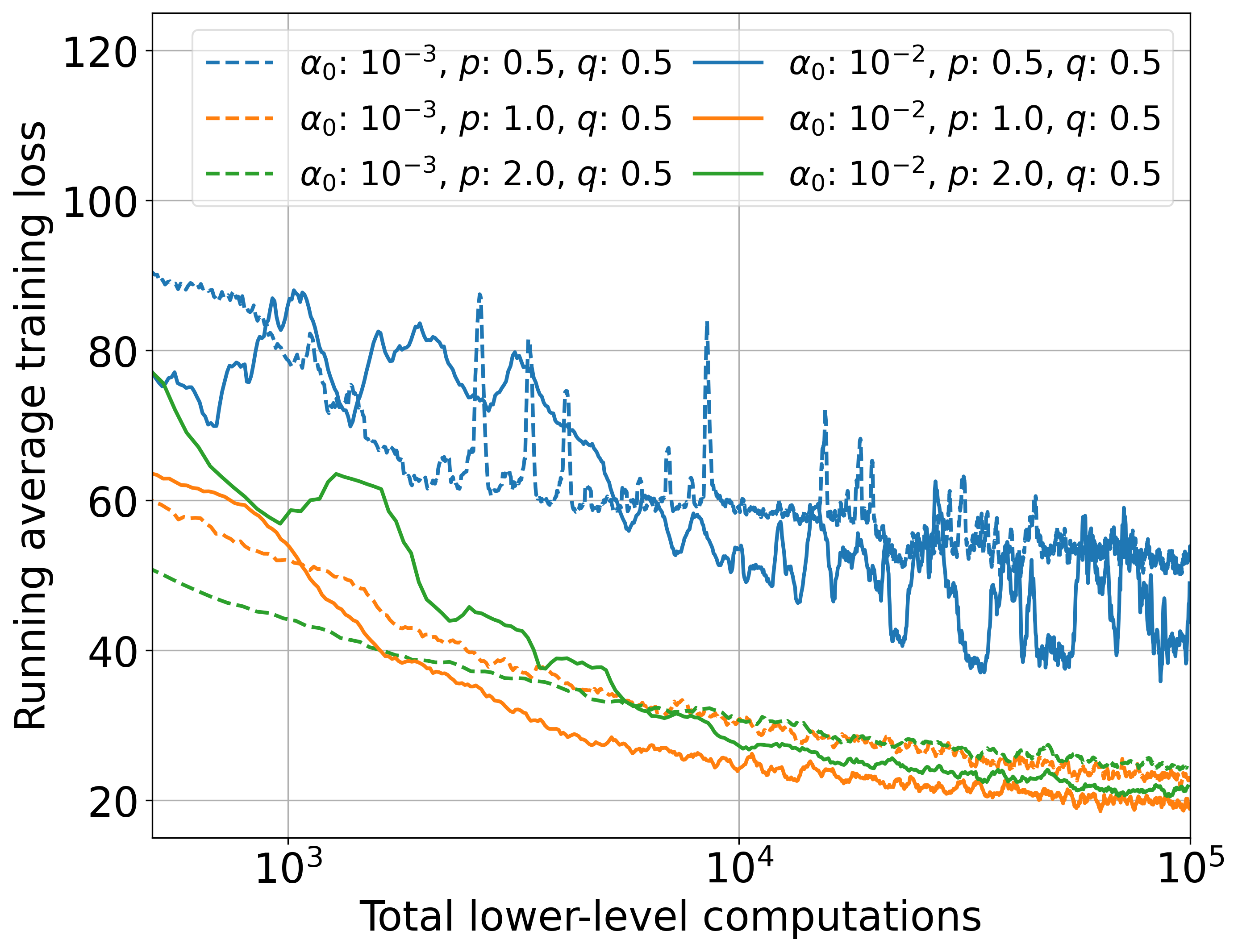}
    }\hspace{-10pt}
    \subfloat[]{%
        \label{fig:1e0_psnr}
        \includegraphics[width=0.325\textwidth]{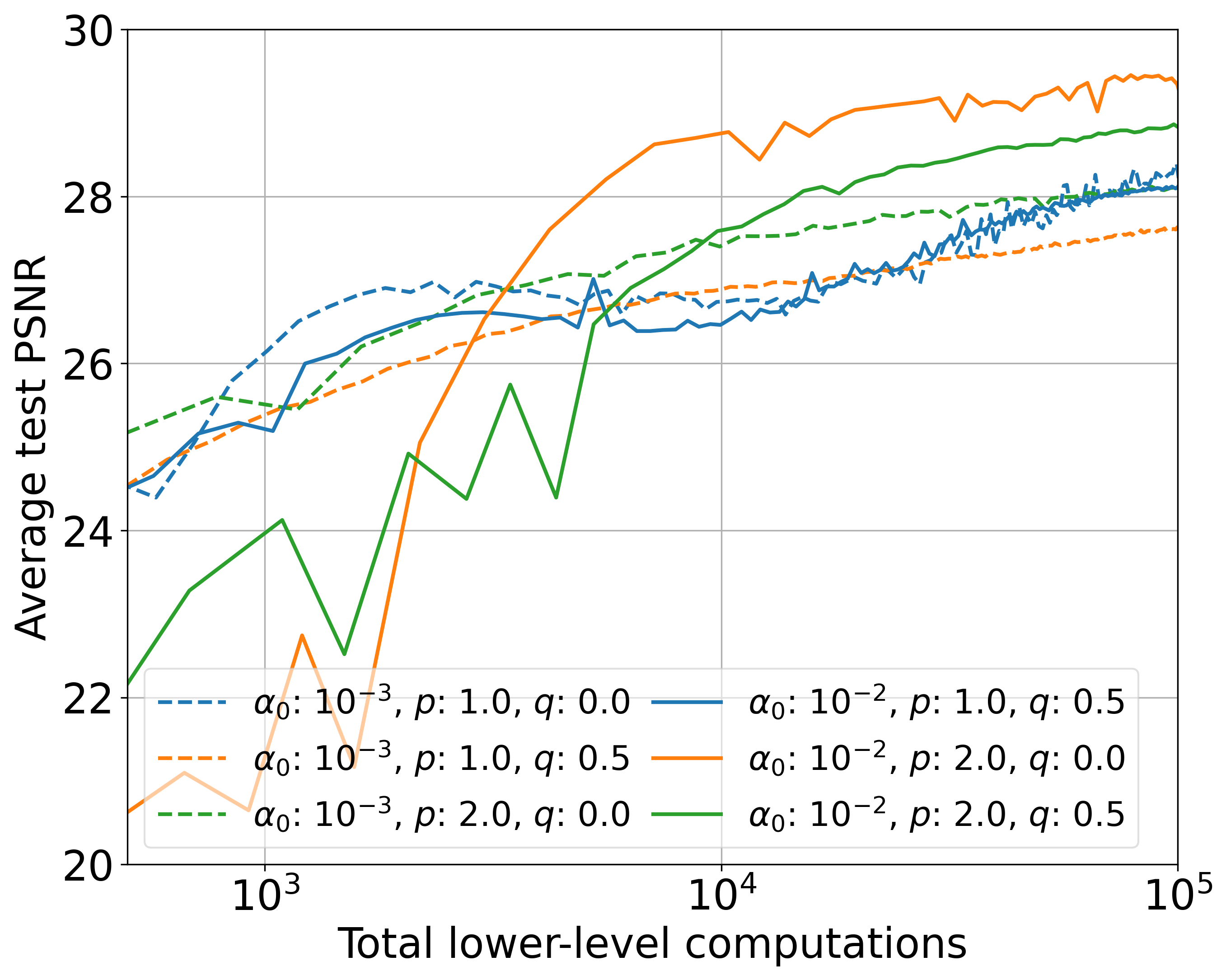}
    }
    \vspace{-1.0em}
\end{minipage}
\vspace{-1.0em}
\begin{minipage}[b]{0.02\textwidth}
    \centering
    \rotatebox{90}{\footnotesize \hspace{40pt} $\epsilon_0 = 10^{-2}$}
\end{minipage}
\begin{minipage}[b]{0.97\textwidth}
    \subfloat[]{%
        \label{fig:1e-2_fixed}
        \includegraphics[width=0.325\textwidth]{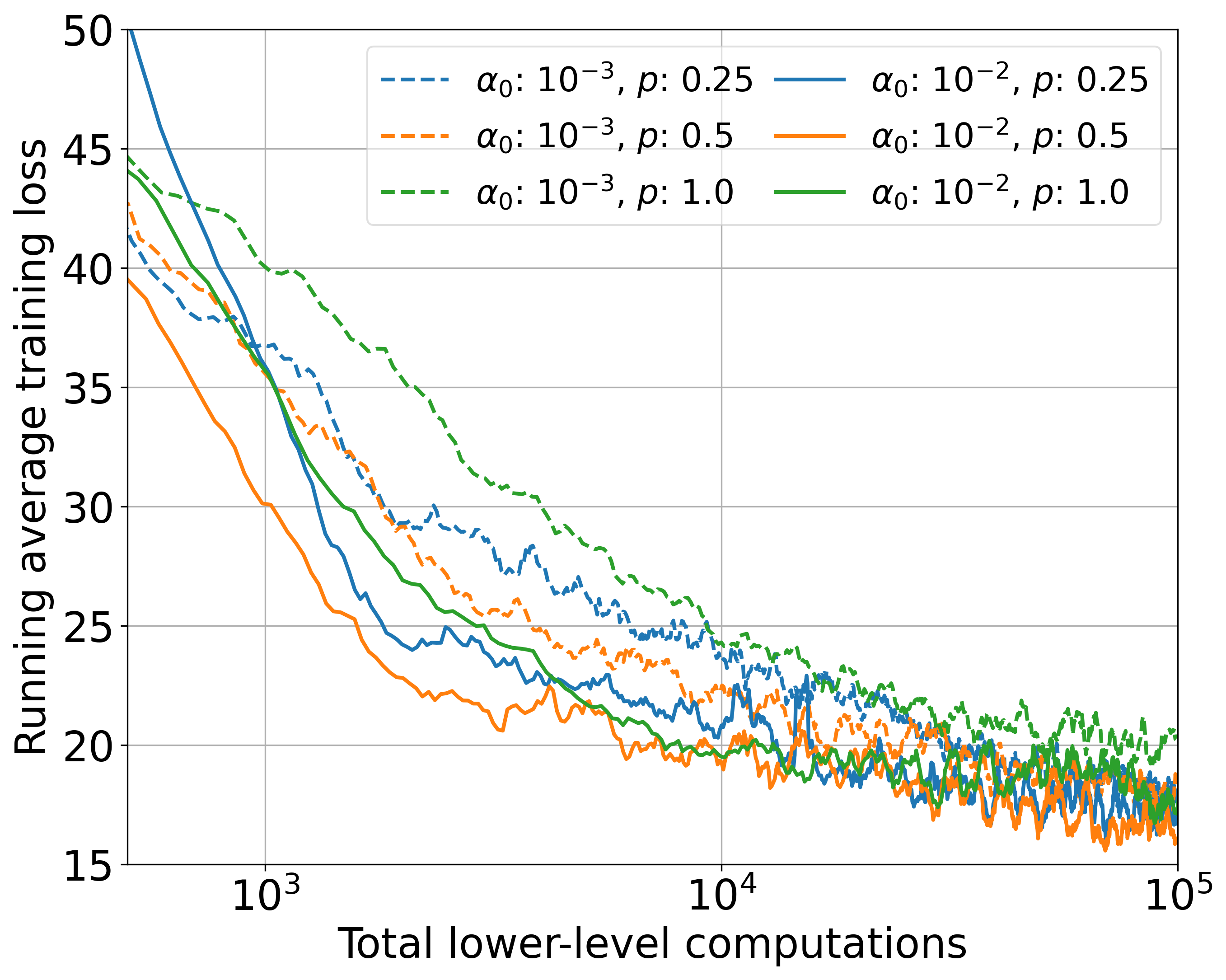}
    }\hspace{-10pt}
    \subfloat[]{%
        \label{fig:1e-2_decrease}
        \includegraphics[width=0.325\textwidth]{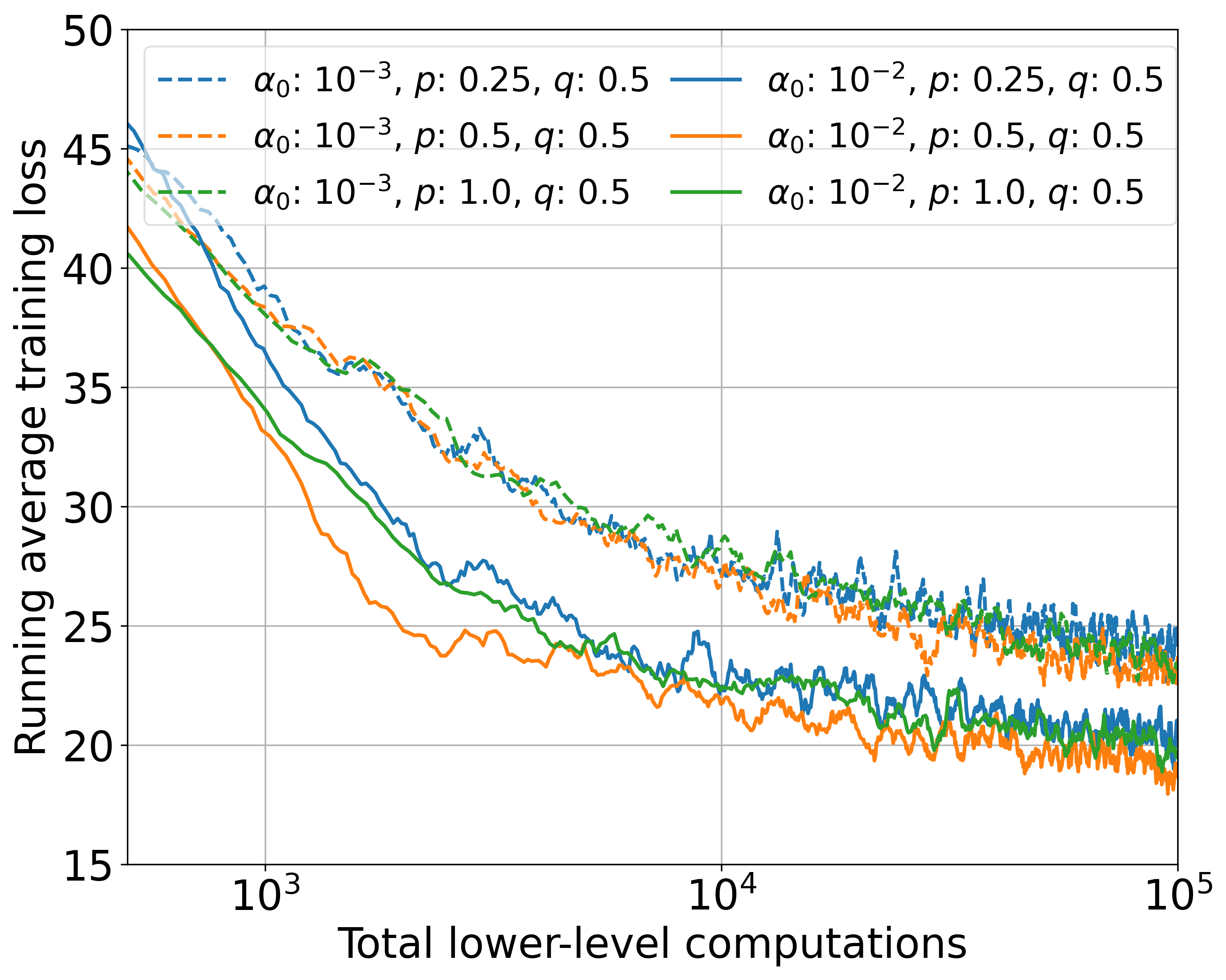}
    }\hspace{-10pt}
    \subfloat[]{%
        \label{fig:1e-2_psnr}
        \includegraphics[width=0.325\textwidth]{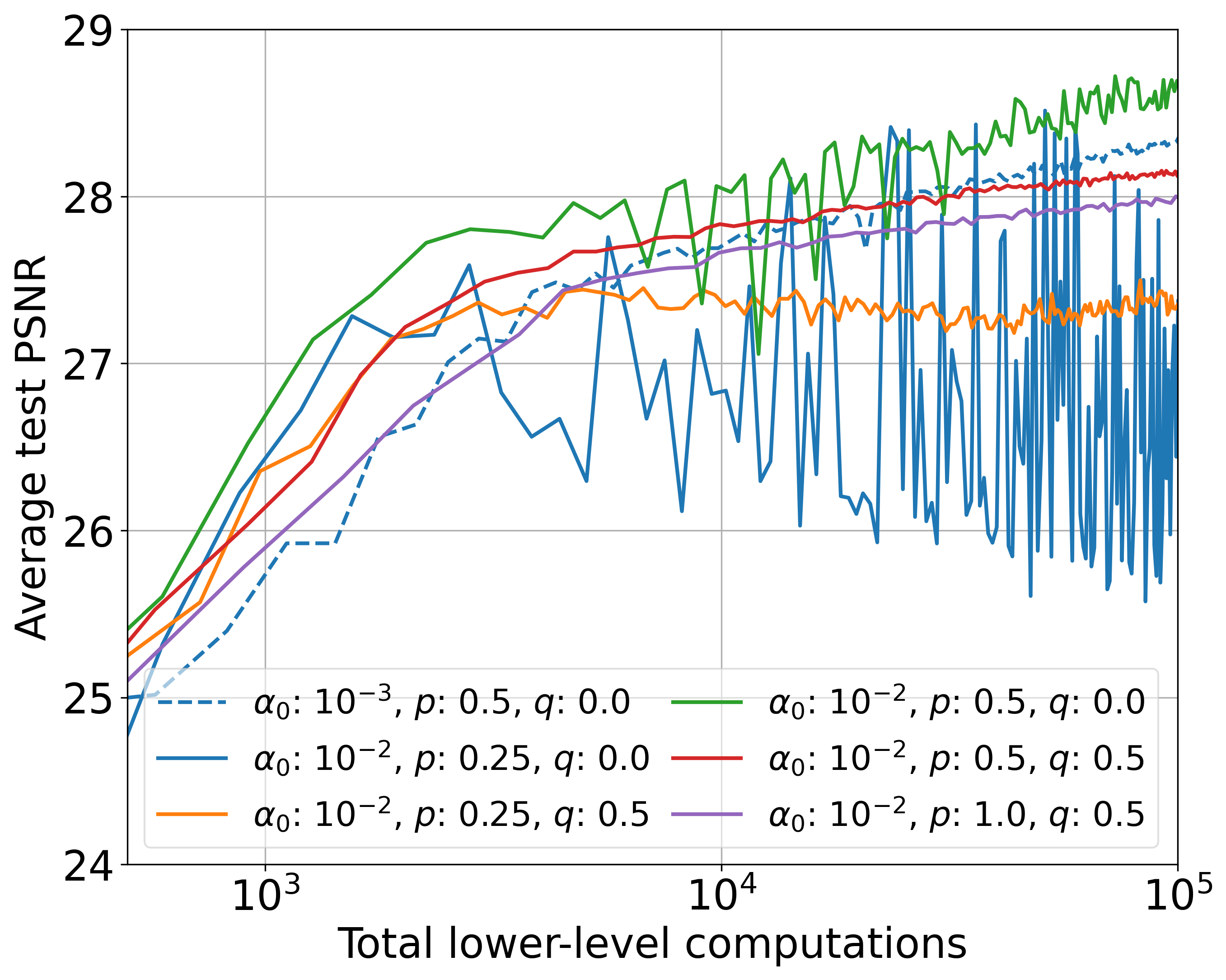}
    }
\end{minipage}
\vspace{-0.5em}
\begin{minipage}[b]{0.02\textwidth}
    \centering
    \rotatebox{90}{\footnotesize \hspace{40pt}$\epsilon_0 = 10^{-4}$}
\end{minipage}
\begin{minipage}[b]{0.97\textwidth}
    \subfloat[]{%
    \label{fig:1e-4_fixed}
    \includegraphics[width=0.325\textwidth]{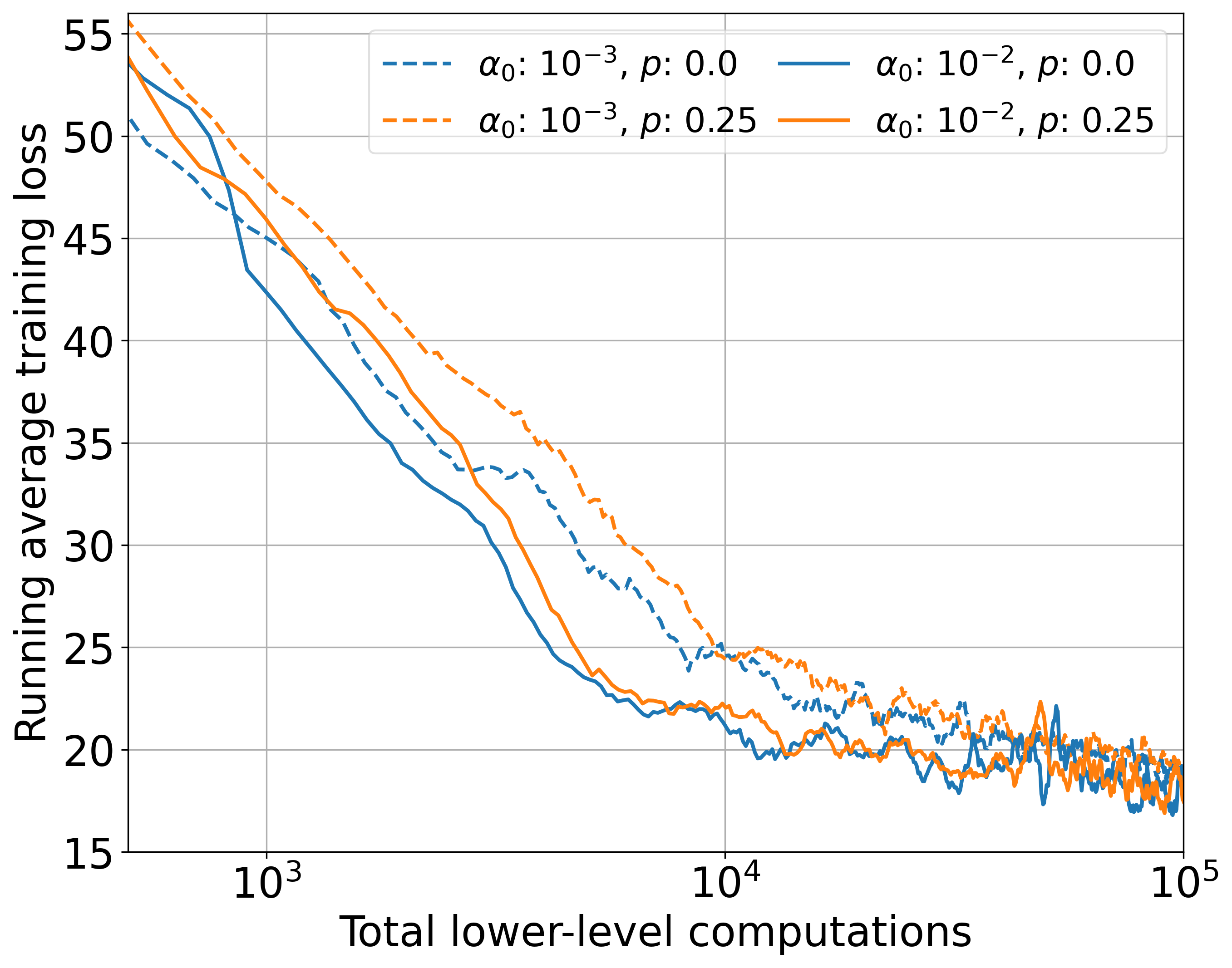}
} \hspace{-10pt}
\subfloat[]{%
    \label{fig:1e-4_decrease}
    \includegraphics[width=0.325\textwidth]{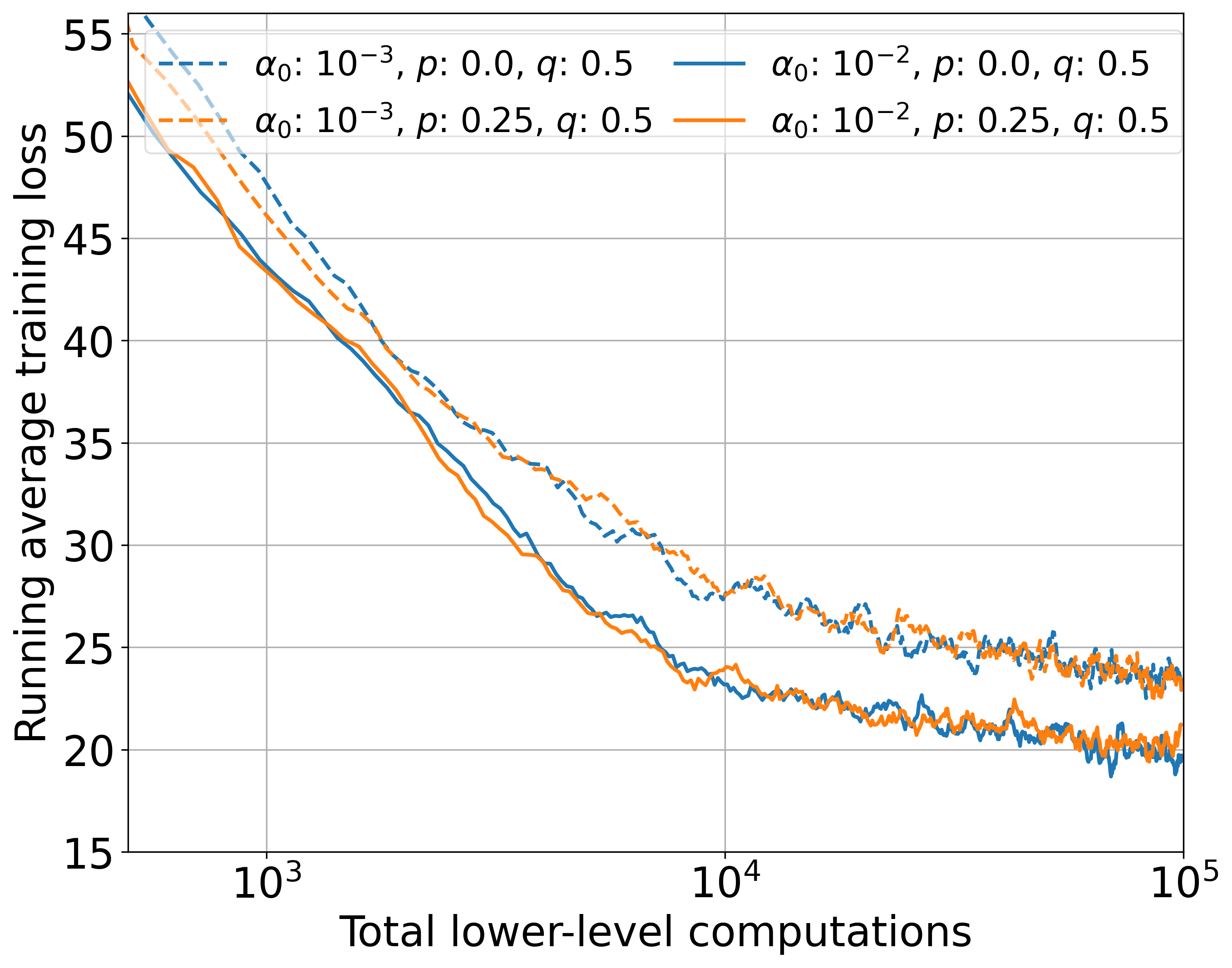}
}\hspace{-10pt}
\subfloat[]{%
    \label{fig:1e-4_psnr}
    \includegraphics[width=0.325\textwidth]{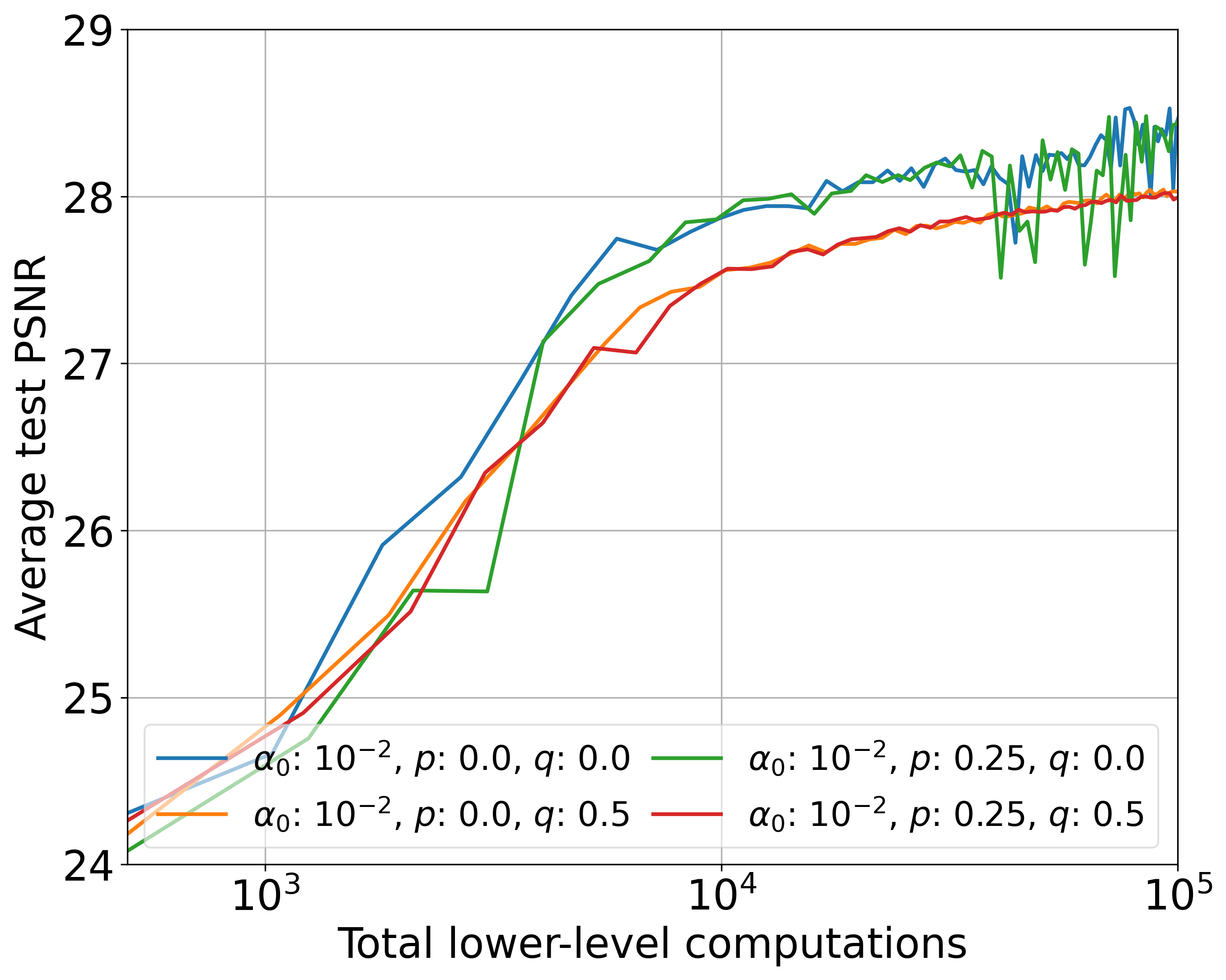}
}
\end{minipage}
\caption{Training and test results for initial accuracies $\epsilon_0 = 1, \epsilon_0 = 10^{-2}, \epsilon_0 = 10^{-4}$. The first column (a), (d), (g) shows training loss with a fixed upper-level step size. The second column (b), (e), (h) represents training loss with a decreasing upper-level step size with decreasing schedule exponent $q>0$. The last column (c), (f), (i) illustrates test PSNR comparison between the best-performing fixed and decreasing step size configurations.}
\label{fig:1e0_1e-2_1e-4_denoise}
\end{figure}

We evaluated performance based on two criteria: convergence speed under a limited computational budget and lower overall loss with more stable behavior throughout training, especially, at its final stages. We measure computational cost as the total number of lower-level solver iterations plus the linear system solver (used in computing the hypergradient) iterations. This measure captures the dominant sources of computational effort in bilevel optimization while remaining independent of implementation or hardware details \cite{MAID}. We set a budget of $10^5$ total computational cost to ensure a fair comparison across experiments. The performance of different settings under these criteria is shown in \cref{fig:1e0_1e-2_1e-4_denoise}.

    First, by starting from a large accuracy $\epsilon_0 = 1$, it can be seen in Figures~\ref{fig:1e0_fixed}, \ref{fig:1e0_decrease}, and \ref{fig:1e0_psnr} that only aggressive accuracy decrease schedules such as $k^{-1}$ and $k^{-2}$ result in a meaningful reduction in loss values. As shown in \cref{fig:1e0_fixed}, for this large initial tolerance and a fixed step size of $\alpha_0 = 10^{-3}$, the $k^{-2}$ schedule initially yields a faster decrease in loss. However, the $k^{-1}$ schedule catches up after slightly more than  $10^3$ computations. Initially slower, the setting with a larger fixed step size $10^{-2}$ eventually joins the aforementioned curves, showing that with sufficient accuracy adjustment, it can be an effective step size.

While a large tolerance of $\epsilon_0 = 1$ combined with a fixed large step size of $\alpha_0 = 10^{-2}$ and any accuracy schedule less aggressive than $k^{-2}$ tends to be suboptimal or even divergent, as shown in \cref{fig:1e0_decrease}, a modest step size decay such as $k^{-0.5}$ significantly improves performance, making it comparable to the fixed $\alpha_0 = 10^{-3}$, and stabilizes the training loss behavior. On the other hand, while decreasing step sizes starting from $\alpha_0 = 10^{-3}$ result in reduced variance initially, they slow down and yield higher loss values under the fixed budget compared to the same fixed step size.

Finally, the test PSNR results in \cref{fig:1e0_psnr} show that while decreasing the step size reduces test-time variance, its benefits are most pronounced when combined with a larger initial step size ($\alpha_0 = 10^{-2}$). The best overall performance is achieved using a fixed step size of $\alpha_0 = 10^{-2}$. These results suggest that the choice of the accuracy schedule is more critical. {Depending on the user’s utility, decreasing the step size can prevent blowups in training, as it visible in Figures \ref{fig:1e0_fixed} and \ref{fig:1e0_psnr},  but may lower performance under a finite computational budget. Even with a suitable accuracy, a fixed step size can result in oscillatory and suboptimal performance under a very limited budget. However, with a larger computational budget and validation monitoring, it can lead to better performance. Step size tuning can be performed using shorter runs, for example, by examining performance after $10^2$ to $10^3$ computations, as shown in \cref{fig:1e0_fixed}.}

Starting with a mid-level accuracy of $\epsilon_0 = 10^{-2}$, a larger initial step size of $\alpha_0 = 10^{-2}$ performs better compared to the corresponding settings for $\epsilon_0 = 10^{-3}$, as shown in Figures~\ref{fig:1e-2_fixed} and \ref{fig:1e-2_decrease}. In \cref{fig:1e-2_fixed}, the accuracy decay rate $p = 1$ appears overly aggressive, while $p = 0.25$ is insufficient. The intermediate value $p = 0.5$ yields the best performance. 

When the step size is decreased, as shown in \cref{fig:1e-2_decrease}, training becomes more stable, variance is reduced, and the gap between settings with initial step sizes of $10^{-2}$ and $10^{-3}$ becomes more pronounced. However, the overall performance and the ranking of suitable accuracy decay schedules remain similar to those observed with a fixed step size, underscoring the importance of selecting an appropriate accuracy schedule. Interestingly, the setting $\alpha_0 = 10^{-2}, p = 0.25, q = 0.5$, while effective during training (\cref{fig:1e-2_decrease}) and provides optimal theoretical rate (\cref{convergence_rate}), leads to suboptimal test performance. This is further supported by the test PSNR results in \cref{fig:1e-2_psnr}, where a decreasing step size yields significantly lower variance, but 
the best performance under a fixed computational budget is achieved using a fixed step size combined with the appropriate accuracy schedule $k^{-0.5}$, reaching $0.5$ dB PSNR improvement over the counterpart with decreasing step size.


Starting with a high accuracy level of $\epsilon_0 = 10^{-4}$, Figures~\ref{fig:1e-4_fixed}, \ref{fig:1e-4_decrease}, and \ref{fig:1e-4_psnr} show that, in a finite-time setting, decreasing the accuracy is less critical and may even slightly slow down performance due to more computations than necessary. A similar pattern to the mid-accuracy case is observed for the step size: the larger step size ($\alpha_0 = 10^{-2}$) yields the best performance even without any decay, while decreasing step size variants reduce variance but result in overall weaker fixed-budget outcomes (see \cref{fig:1e-4_psnr}).

However, this high-accuracy scenario is less attractive in practice, as it requires significant computational effort from the early stages of training, and what is sufficiently high accuracy for a given problem is not known a priori. {From a practical perspective, it is ultimately up to the user to choose between lower but more stable and predictable performance, with no test PSNR drop and stronger theoretical support, or higher peak performance at the cost of potential test-time drops and fluctuations. Note that validation and testing are computationally expensive in the bilevel setting, unlike in common machine learning tasks, as they require solving the lower-level problem with many iterations to achieve high-accuracy inference. Thus, one might want to avoid it while still ensuring that the method does not behave erratically, which encourages the use of a decreasing step size.
As observed in Figures \ref{fig:1e0_fixed} and \ref{fig:1e0_decrease}, starting with a large $\epsilon_0$ requires a sharp decay rate in accuracy. Moreover, as $\epsilon_0$ decreases (for instance after $10^4$ computations in Figures \ref{fig:1e0_fixed} and \ref{fig:1e0_decrease}), the decay rate should be less aggressive to yield the best results. A multi-level or adaptive strategy that begins with large $\epsilon_0$ and chooses $p$ to be as small as possible and as large as necessary,  could potentially achieve a desirable performance with substantially fewer computations.
}

\begin{figure}[tbhp]
\centering
\subfloat[]{%
    \label{fig:winner_denoise_train_loss}
    \includegraphics[width=0.45\textwidth]{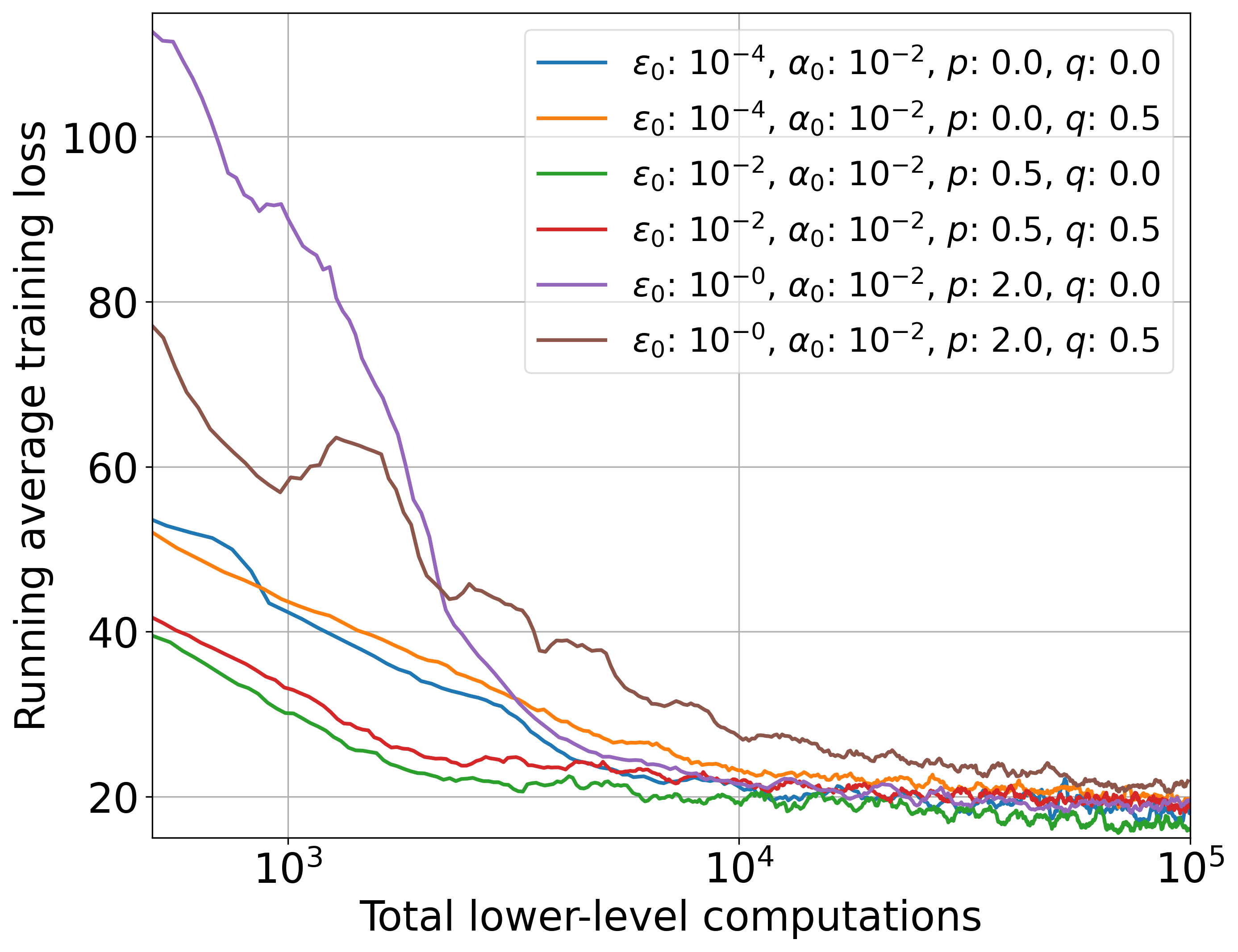}
}\hfill 
\subfloat[]{%
    \label{fig:winner_denois_test_psnr}
    \includegraphics[width=0.45\textwidth]{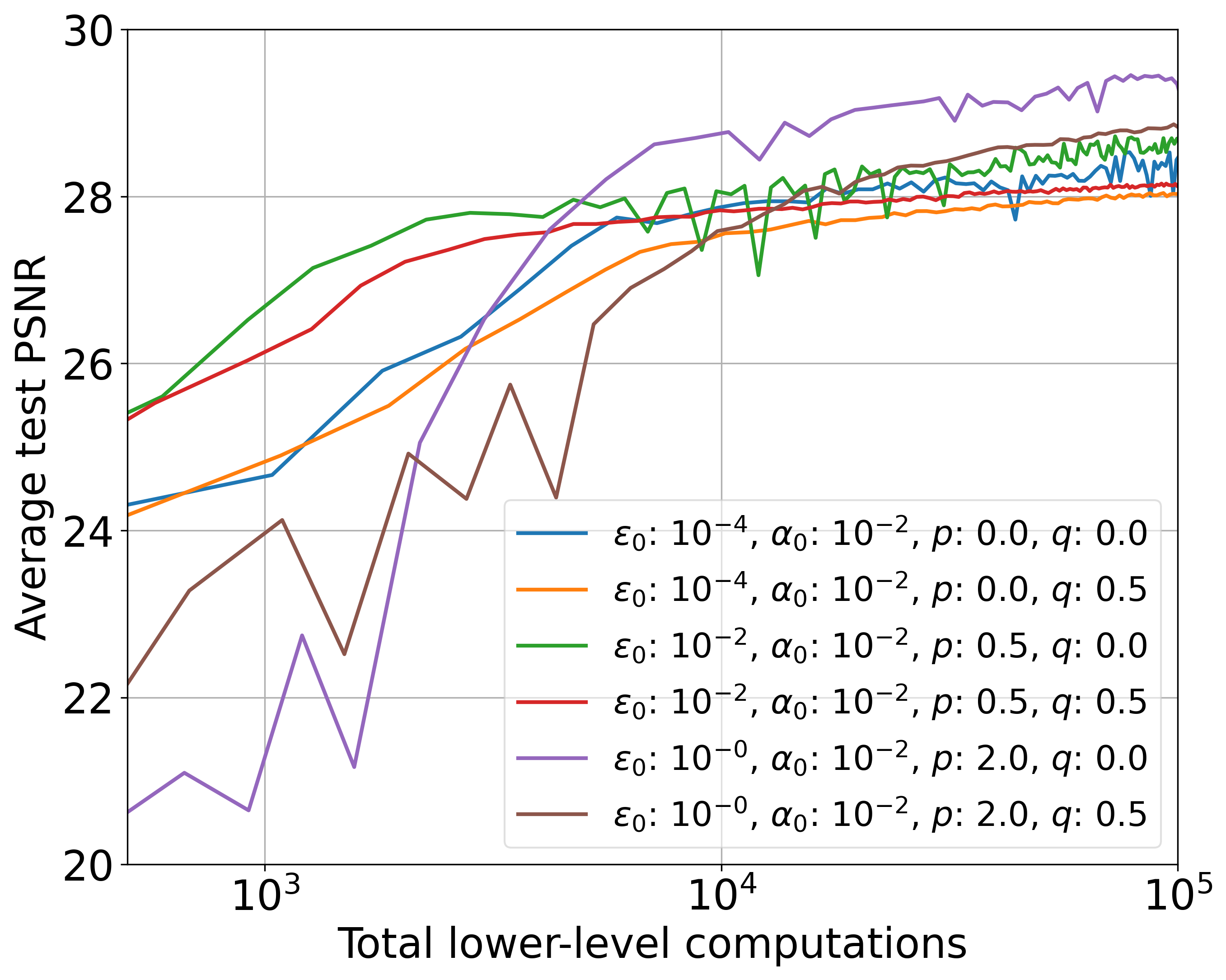}
}
\caption{Best-performing fixed and decreasing step size configurations across all accuracy settings $\epsilon_0 \in \{10^{0}, 10^{-2}, 10^{-4}\}$. (a) Training loss plotted against computational cost. (b) Test PSNR plotted against computational cost.}
\label{fig:winner_denoise}
\end{figure}

By selecting the best-performing settings and plotting them together, as shown in \cref{fig:winner_denoise}, it becomes clear that choosing an appropriate accuracy schedule based on the initial accuracy $\epsilon_0$, specifically, using a larger $p$ for larger $\epsilon_0$, can yield desirable training and test performance. The mid-accuracy case $\epsilon_0 = 10^{-2}$ with decreasing accuracy performed best initially up to around $10^4$ computations, and once finding a suitable accuracy, $\epsilon_0 = 1$ with $p = 2$ takes over, although this could not have been known a priori.

{Furthermore, although our convergence theorem (\cref{convergence_thm}) requires a decreasing step size with $0.5 < q < 1$, in practice a non-decreasing step size can yield better finite-budget performance, as shown in \cref{fig:winner_denoise}. This suggests room for improved non-asymptotic analysis, while decreasing step sizes reduce training variance and produce more stable test results.}

\begin{figure}[tbhp]
\centering
\begin{minipage}[b]{0.02\textwidth}
    \centering
    \rotatebox{90}{\footnotesize \hspace{25pt} Noisy \hspace{70pt} GT}
\end{minipage}
\begin{minipage}[t]{0.20\textwidth}
    \centering   \includegraphics[width=0.9\textwidth]{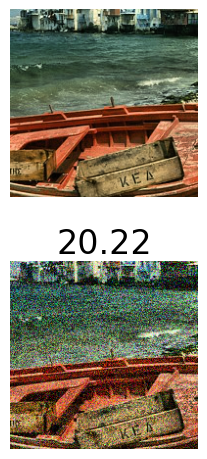}
\end{minipage}
\hfill
\begin{minipage}[t]{0.70\textwidth}
    \centering
    \vspace{-180pt}
    \begin{minipage}[t]{0.08\textwidth}
        \centering\rotatebox{90}{\tiny
        \parbox{2.5cm}{\centering
            $\alpha_0 = 10^{-2}, \epsilon_0 = 10^{-2}$\\
            $p = 0.5, q = 0$\\
            High accuracy
        }
    }
    \end{minipage}%
    \begin{minipage}[t]{0.92\textwidth}
        \includegraphics[width=\textwidth]{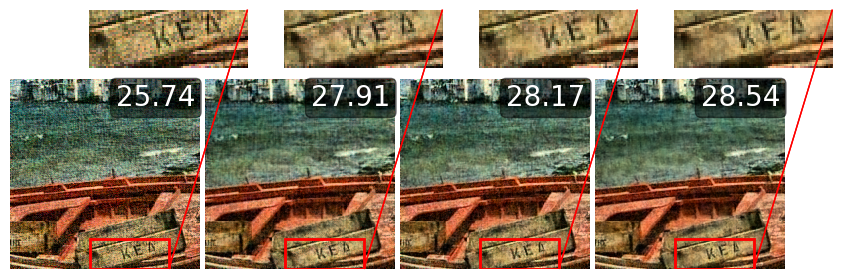}
    \end{minipage}
    \begin{minipage}[t]{0.08\textwidth}
    \centering\rotatebox{90}{\tiny
        \parbox{2.5cm}{\centering
            $\alpha_0 = 10^{-2}, \epsilon_0 = 1$\\
            $p = 2, q = 0$\\
            Low accuracy
        }
    }
    \end{minipage}%
    \begin{minipage}[t]{0.92\textwidth}
        \includegraphics[width=\textwidth]{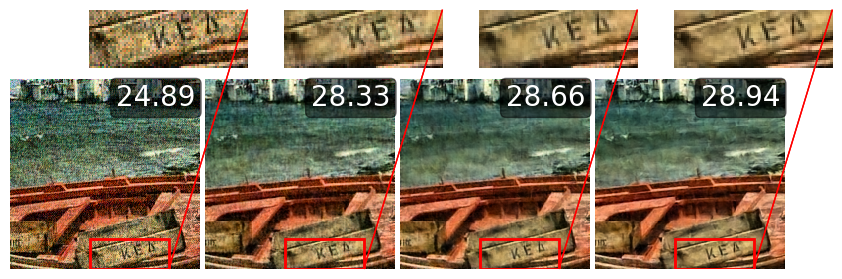}
    \end{minipage}


\end{minipage}
\caption{Denoising results on test data at training checkpoints corresponding to computational costs of 5,000; 10,000; 50,000; and 100,000. Each row displays outputs from different hyperparameter configurations.}
\label{fig:denoise_ckp}
\end{figure}

The qualitative results at selected testing checkpoints corresponding to different training settings are shown in ~\cref{fig:denoise_ckp}, and they align well with the performance curves in ~\cref{fig:winner_denoise}. The configuration with $\epsilon_0 = 1$ and $p = 2$ achieves the overall best performance, as evident in ~\cref{fig:denoise_ckp}. While the setting with $\epsilon_0 = 10^{-2}$ and $p = 0.5$ performs best in the early stages, it ultimately ranks second, though still outperforming the high-accuracy setting $\epsilon_0 = 10^{-4}$, which could not have been anticipated a priori. These results suggest that, by selecting an appropriate accuracy-decay schedule (independent of the initial accuracy) and using a fixed step size, one can reliably obtain a well-trained denoising prior.

\subsection{Inpainting with Convex Ridge Regularizer}
\paragraph{Image Inpainting}
To analyze our approach on a more complicated inverse problem (non-trivial forward operator), we consider an image inpainting task using grayscale images of size \(96 \times 96\). The training data consists of $128$ randomly selected and cropped images of the BSDS500 dataset~\cite{BSDS500}, where each image is corrupted by randomly masking $70\%$ of the pixels using a Bernoulli mask
$M_{ij} \sim \mathrm{Bernoulli}(p), \ 1\leq i, j \leq 96$, with $p = 0.3$, and the observation is given by $y = M \odot x$,
which corresponds to retaining approximately $30\%$ of the original data, implemented via the \texttt{Bernoulli Splitting Mask Generator}\footnote{\href{https://deepinv.github.io/deepinv/api/stubs/deepinv.physics.generator.BernoulliSplittingMaskGenerator.html\#deepinv.physics.generator.BernoulliSplittingMaskGenerator}{BernoulliSplittingMaskGenerator documentation}}. Additionally, additive Gaussian noise with standard deviation \( \sigma = 0.02 \) is applied to the unmasked pixels. This setting corresponds to solving the bilevel problem in~\eqref{Denoising_bilevel}, where the data fidelity term in the lower-level problem~\eqref{lower_stochastic} is modified to \( \|Ax - y\|_2^2 \), with \( A \) representing the inpainting (binary masking) operator and \( y \in \mathbb{R}^{96\times 96} \) the observed noisy and incomplete image. The regularizer $R_\theta(x)$ uses the same CRR architecture as in \cref{subsec:denoise_CRR}, with the input channel changed to $1$ to support grayscale images, and an additional penalty term $\tfrac{\xi}{2}\|x\|^2$ with $\xi = 10^{-6}$ to ensure strong convexity of the lower-level problem. The upper-level loss~\eqref{Upper_stochastic} remains unchanged, with \( x_i^* \) denoting the full ground truth image. The same training and evaluation pipeline as in the denoising experiment is employed, with mini-batches and data augmentations appropriately adjusted for the new image size.
\begin{figure}[tbhp]
\centering
\begin{minipage}[b]{0.02\textwidth}
    \centering\rotatebox{90}{\footnotesize \hspace{45pt}$\epsilon_0 = 10^{-1}, \ \alpha_0 = 10^{-2}$}
\end{minipage}
\subfloat[]{%
    \label{fig:inpainting_loss_1e-2}
    \includegraphics[width=0.45\textwidth]{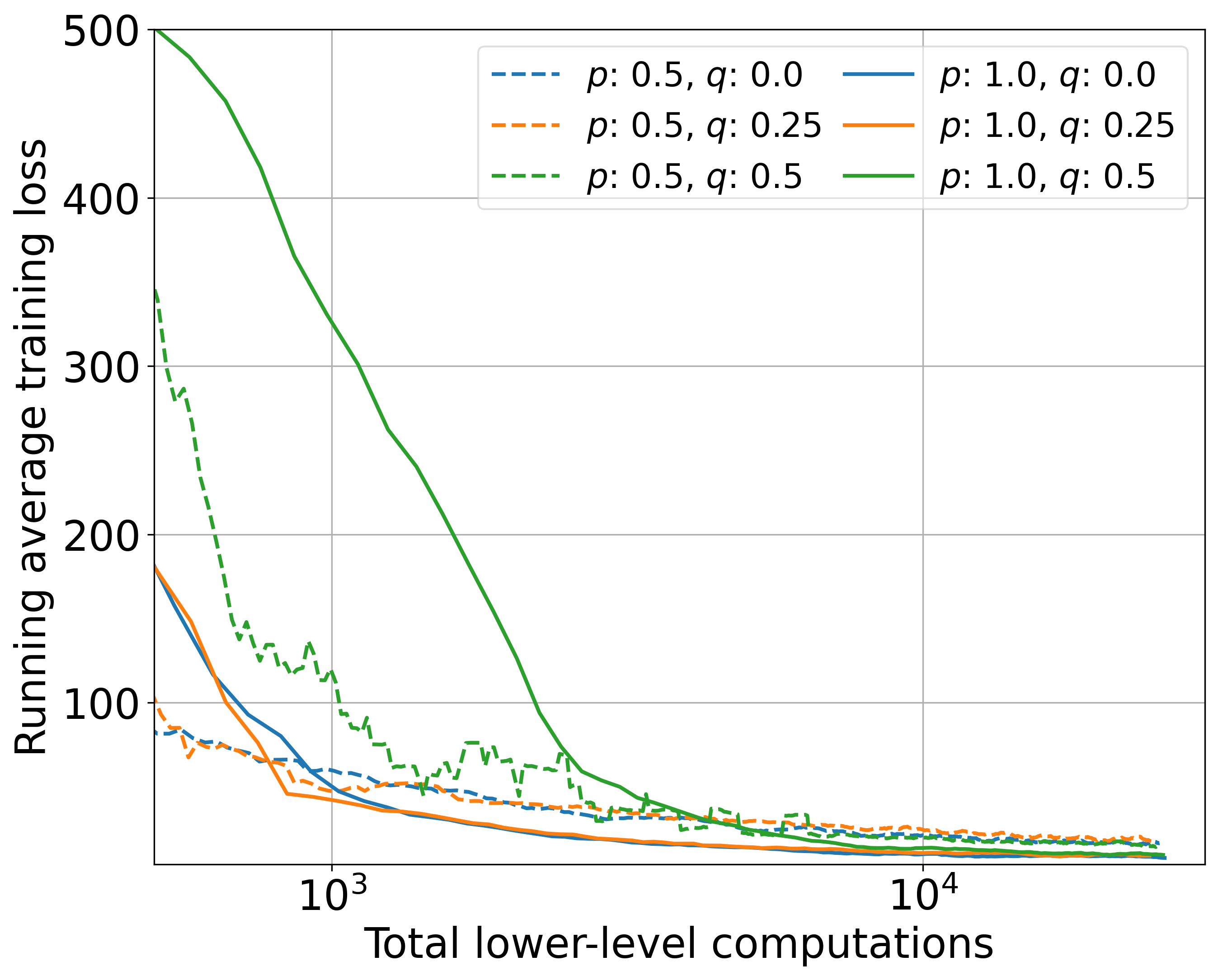}
}\hfill 
\subfloat[]{%
    \label{fig:inpainting_test_1e-2}
    \includegraphics[width=0.45\textwidth]{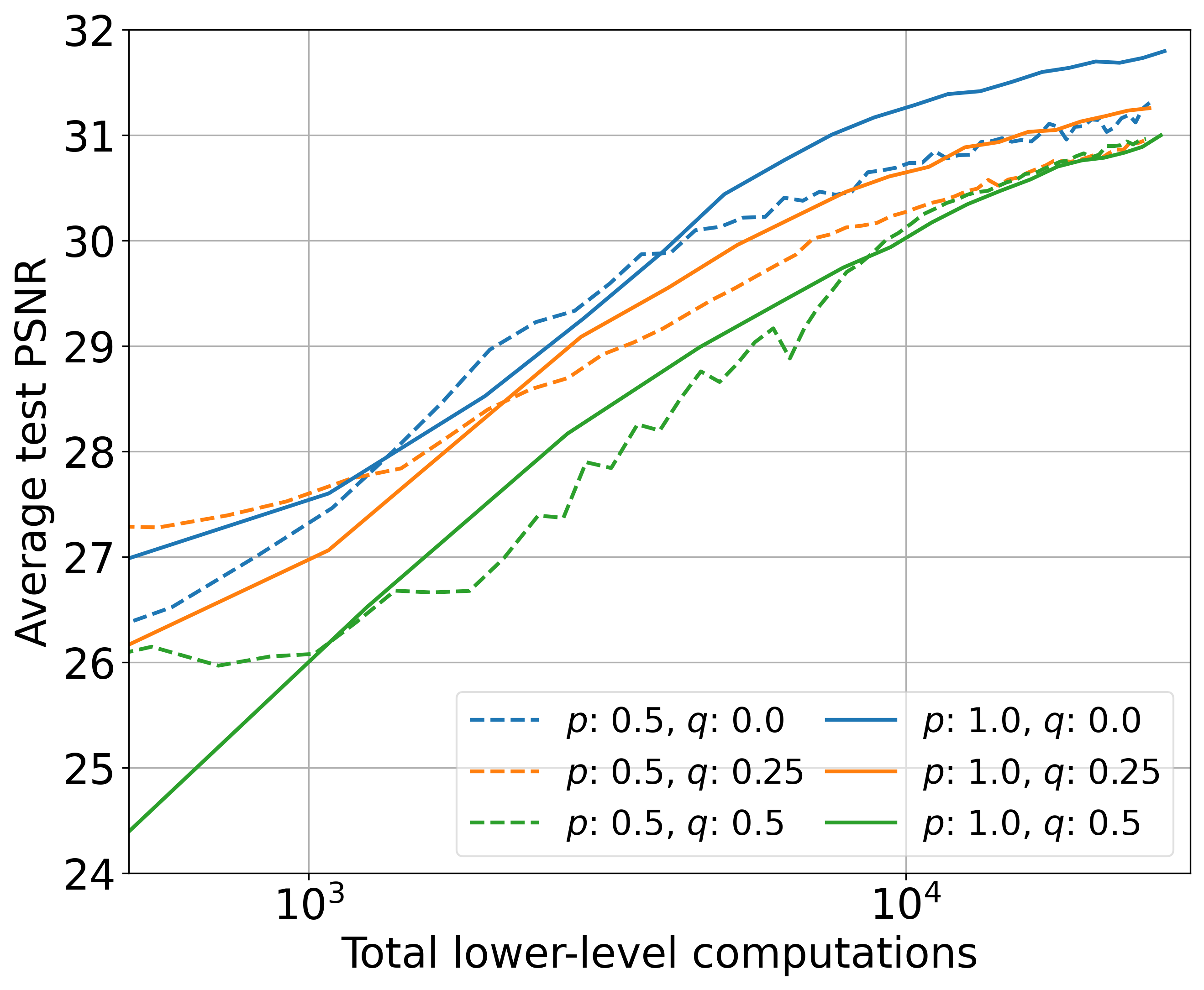}
}
\caption{Running average training loss and average test PSNR for inpainting, plotted against total computational cost. Results are shown for upper-level step size $\alpha_0 = 10^{-2}$, initial accuracy $\epsilon_0 = 10^{-1}$, accuracy schedule exponent $p \in \{0.5, 1\}$, and step size decay exponent $q \in \{0, 0.25, 0.5\}$. All settings with $p = 1$ appear to perform faster than their $p = 0.5$ counterparts.}
\label{fig:Inpainting_curves}
\end{figure}

Considering a large initial accuracy of $\epsilon_0 = 10^{-1}$ and a large step size of $\alpha_0 = 10^{-2}$, chosen via grid search, together with accuracy and step size decay exponents $p \in \{0.5, 1\}$ and $q \in \{0, 0.25, 0.5\}$, the training and testing results for learning a CRR regularizer for image inpainting are shown in \cref{fig:Inpainting_curves}. As illustrated in \cref{fig:Inpainting_curves}(a), for both values of $p$, the fixed step size ($q=0$) and the moderately decaying step size ($q=0.25$) perform comparably during training, with the fixed-step variant achieving higher PSNR in testing. In contrast, the more aggressive decay $q=0.5$ leads to slower training and lower test PSNR for both values of $p$. Although all settings converge to very similar training losses after an extensive computational cost of $10^4$, the configurations with $p=0.5$ reach lower training loss faster than those with $p=1$. However, beyond a computational cost of about $10^3$, the test PSNR of the $p=1$ settings surpasses that of $p=0.5$. The growth of testing PSNR in regions where the training loss exhibits low curvature reflects the complex relationship between optimization and generalization in machine learning. Similar to the denoising case, starting from a large initial accuracy, a fixed step size combined with a suitable accuracy-decay schedule can yield the best overall training and testing performance.

\begin{figure}[tbhp]
\centering

\begin{minipage}[b]{\textwidth}
    \begin{minipage}[t]{0.35\textwidth}
    \vspace{-80pt}
        \raggedleft
        \tiny \textbf{Ground Truth} 
        \\ \vspace{50pt}\textbf{Noisy}
    \end{minipage}
    \begin{minipage}[b]{0.35\textwidth}
        \includegraphics[width=\textwidth]{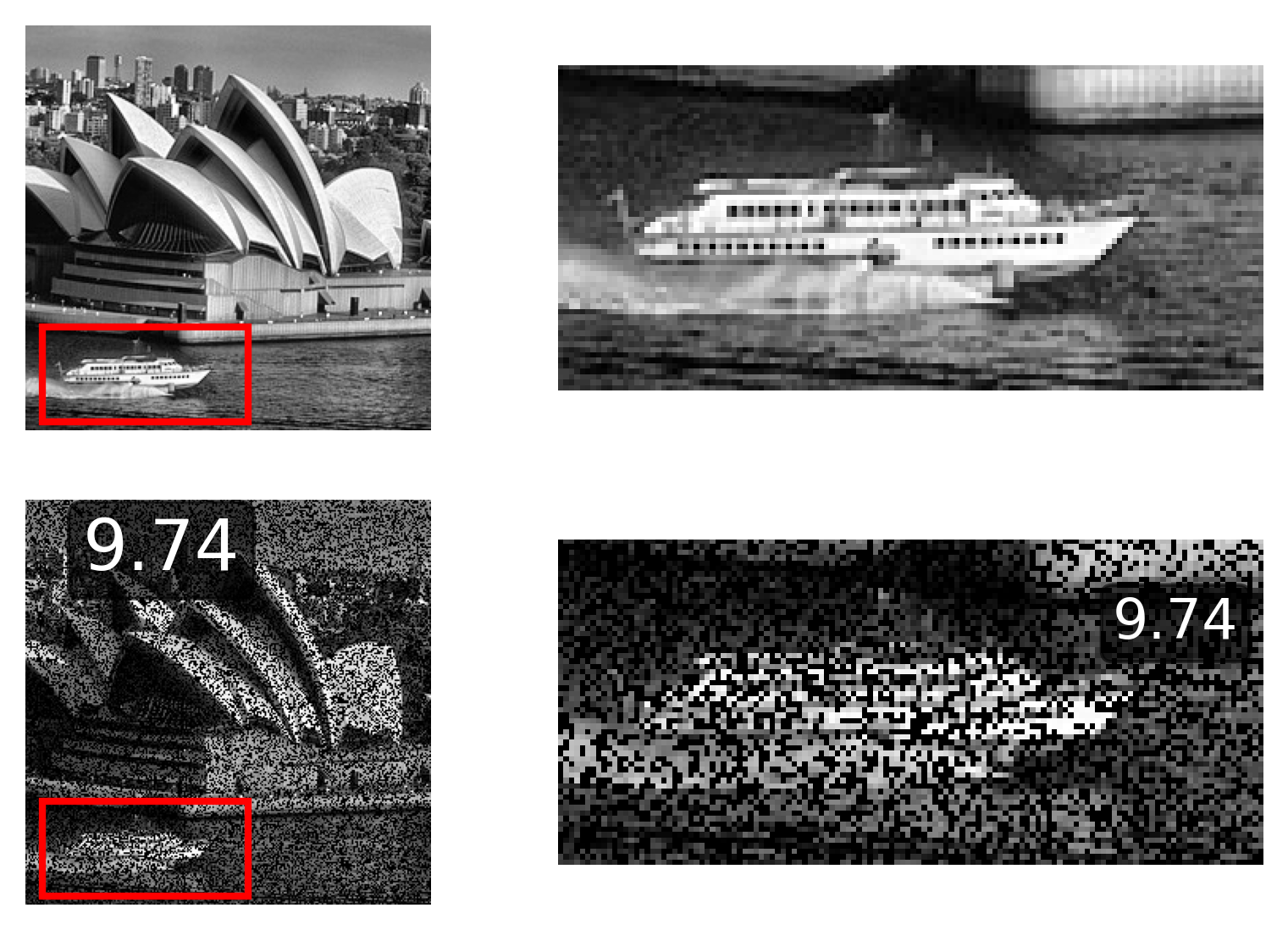}
    \end{minipage}
\end{minipage}
\begin{minipage}[b]{\textwidth}
    \centering
    \tiny $\epsilon_0 = 10^{-1}, \alpha_0 = 10^{-2}$,\quad $p = 1, q = 0$
    
    \includegraphics[width=0.7\textwidth]{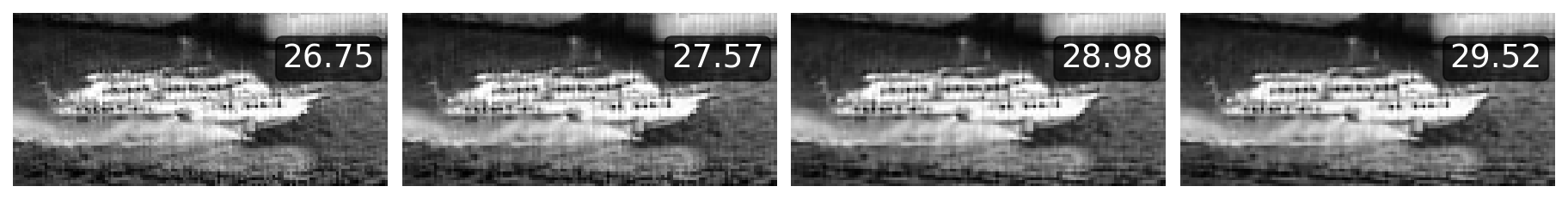}
\end{minipage}
\vspace{-1.0em}

\begin{minipage}[b]{\textwidth}
    \centering
    \tiny $\epsilon_0 = 10^{-1}, \alpha_0 = 10^{-2}$,\quad $p = 1, q = 0.25$\\
\includegraphics[width=0.7\textwidth]{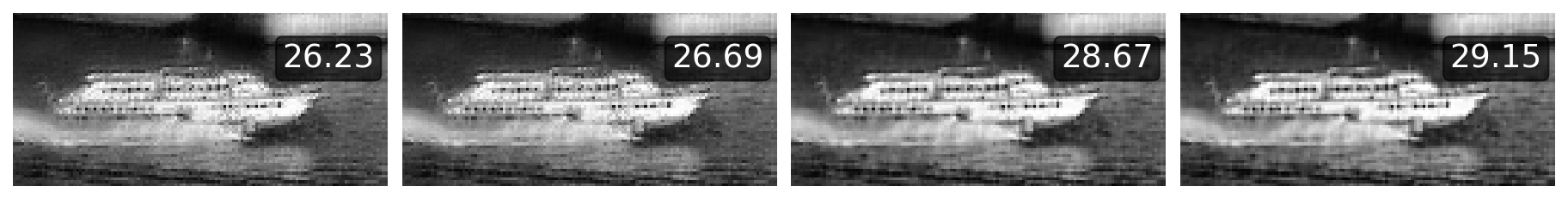}
\end{minipage}
\vspace{-1.0em}


\begin{minipage}[b]{\textwidth}
    \centering
    \tiny $\epsilon_0 = 10^{-1}, \alpha_0 = 10^{-2}$,\quad $p = 1, q = 0.5$
    
    \includegraphics[width=0.7\textwidth]{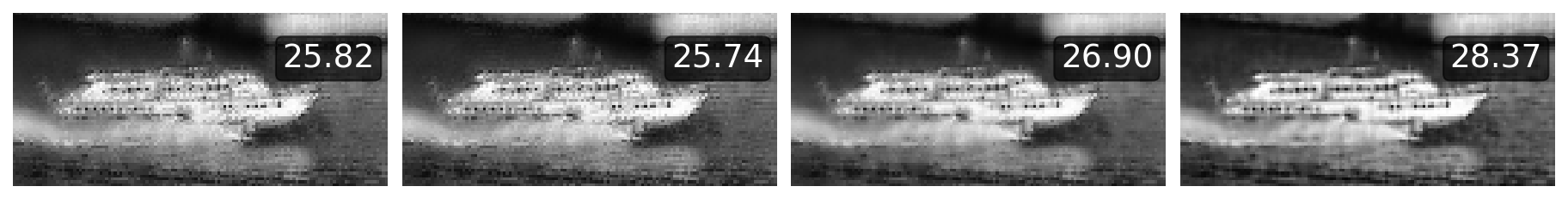}
\end{minipage}
\caption{Inpainting comparison across different decreasing schedule exponent $q \in \{0, 0.25, 0.5 \}$ configurations for fixed upper-level step size, initial accuracy, and accuracy decreasing schedule. Top: ground truth and noisy input image with the zoomed cropped region. Each row shows the zoomed cropped of output across training checkpoints corresponding to computational costs of 2,500; 5,000; 10,000; and 20,000, with the specified hyperparameters.}
\label{fig:inpainting_ckp}
\end{figure}

Given the results in \cref{fig:Inpainting_curves}, when fixing $p=1$ and checkpointing the test quality after specific training computational costs, one can observe that the fixed step size consistently outperforms the decreasing variants with $q \in \{0.25, 0.5\}$ throughout training, resulting in a clear PSNR advantage under the same computational budget as illustrated in \cref{fig:inpainting_ckp}. This indicates that, by appropriately tuning the initial step size, one can obtain a desirable learned regularizer even with a fixed step size during training.
With the more suitable choice of $p = 1$ in this experiment, the results show improved test stability for the decreasing step size variants. During training, however, $q = 0.5$ appears more aggressive than necessary under a fixed computational budget, while $q = 0.25$ performs slightly better than $q = 0$, particularly before $10^3$ computations.

\subsection{ISGD vs IAdam: Denoising with Input-Convex Neural Network Regularizer}
In this experiment, we consider the input-convex neural network (ICNN) architecture \cite{amos2017input,InputConvex} as the regularizer \( R_\theta(x) \) with $2$ layers and the following form:
$$
R_\theta(x) = \sum_{i=1}^{C} \sum_{j=1}^{H} \sum_{k=1}^{W} \left[ \phi\left( W_z \phi(W_x  x + b_x) + b_z \right) \odot \exp(s) \right]_{i,j,k},
$$
where $W_x$ and $W_z$ denote 2D convolution operators, \( \odot \) is the element-wise product, and \( W_x \), \( W_z \), \( b_x \), \( b_z \), and \( s \) are trainable parameters. Note that  $W_z$  is constrained to be non-negative; we enforce this by clamping its negative weights to zero. Additionally,  $W_x$  is initialized with zero-mean filters to ensure improved performance.

The activation function \( \phi: \mathbb{R} \to \mathbb{R} \) is a smoothed clipped ReLU defined as:
$$
\phi(u) = 
\begin{cases}
0, & u < 0, \\
\frac{u^2}{2\nu}, & 0 \leq u < \nu, \\
u - \frac{\nu}{2}, & u \geq \nu,
\end{cases}
$$
where \( \nu > 0 \) is a small smoothing parameter. This architecture ensures convexity of the regularizer \( R_\theta(x) \) with respect to \( x \) and satisfies the regularity assumptions required in our theoretical analysis. In our experiments, we set $\nu = 10^{-3}$.

\begin{figure}[tbhp]
\centering
\subfloat[]{%
    \label{fig:ICNN_fixed} \includegraphics[width=0.45\textwidth]{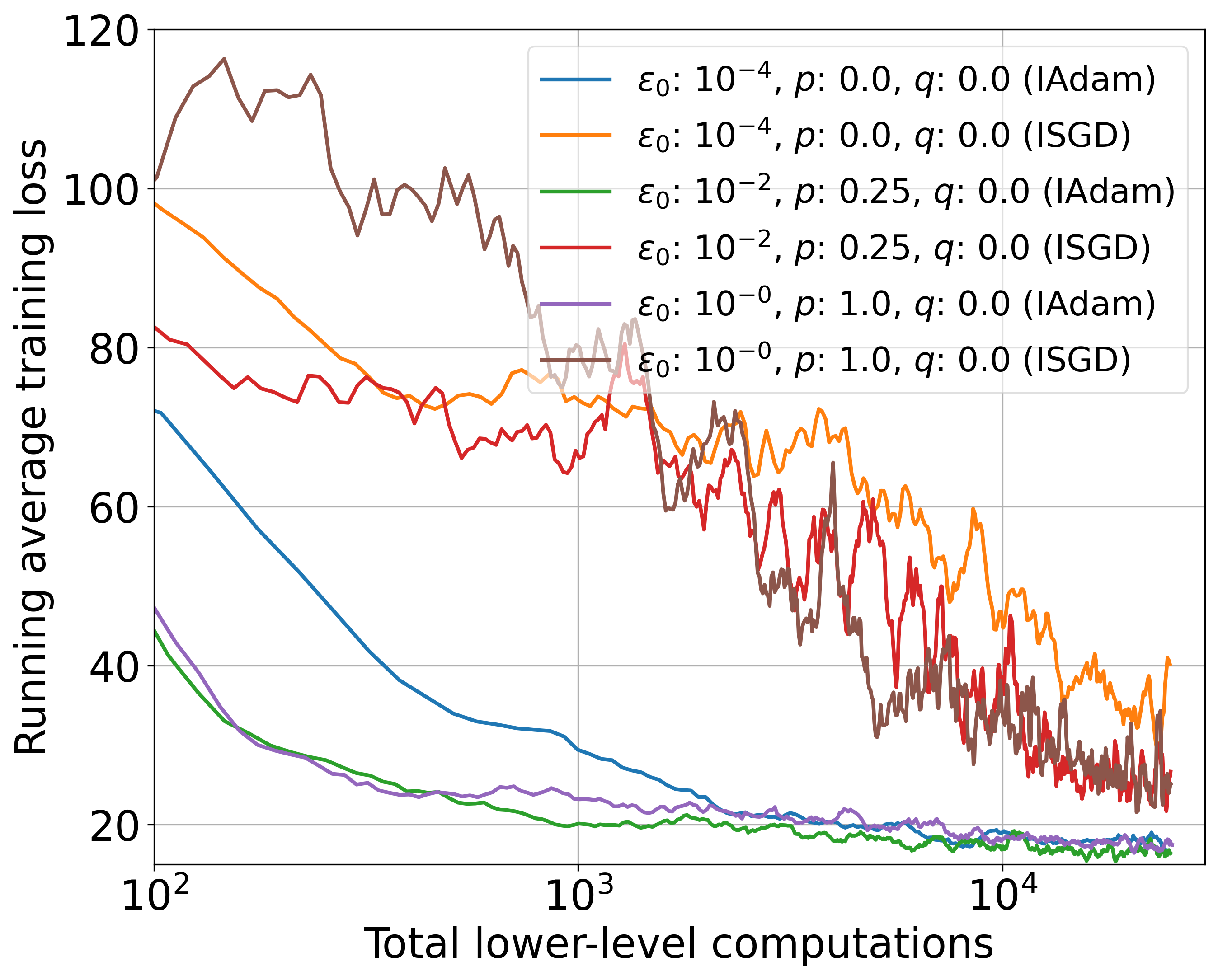}
}\hfill 
\subfloat[]{%
    \label{fig:ICNN_decrease}
    \includegraphics[width=0.45\textwidth]{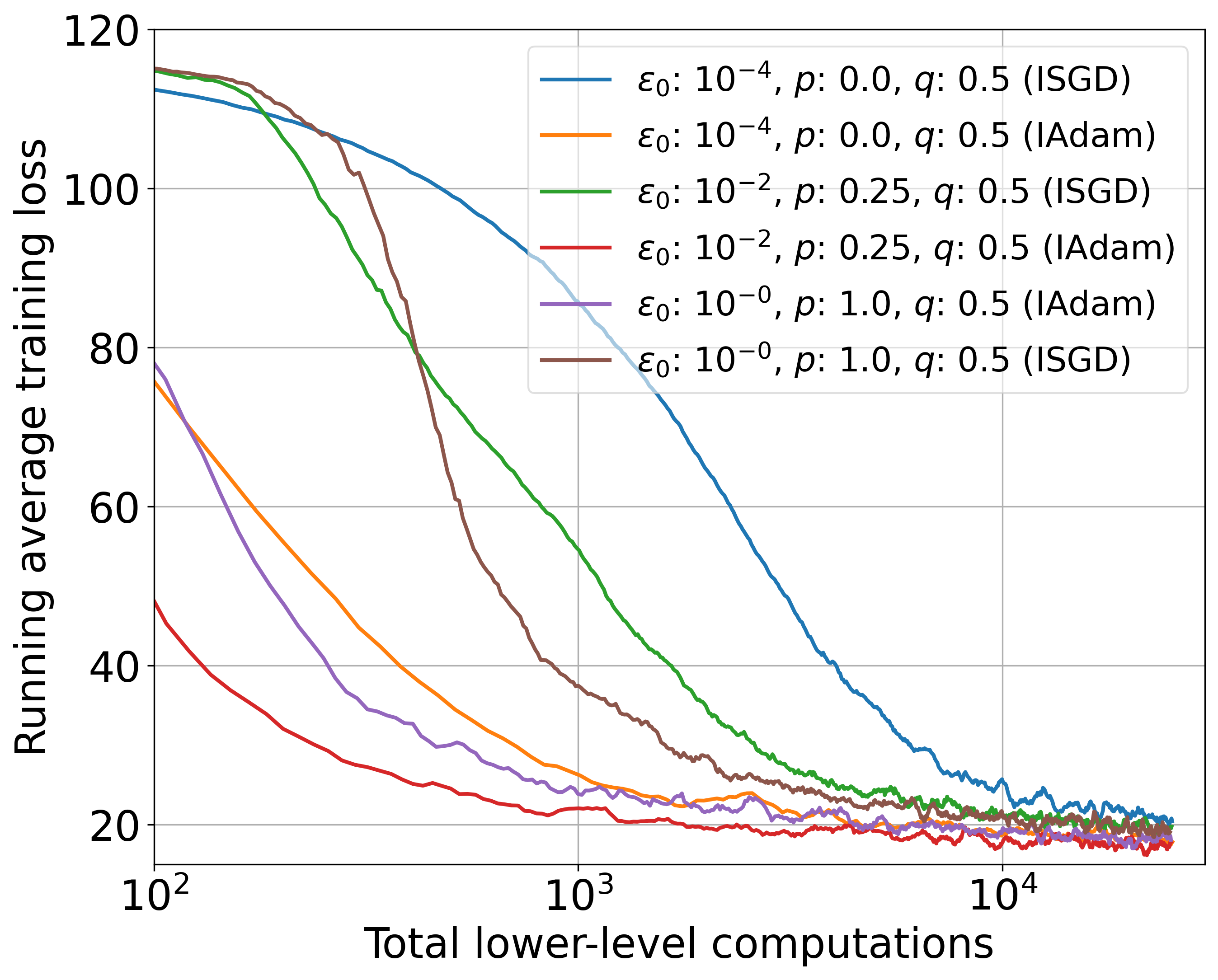}
}
\vfill
\subfloat[]{%
    \label{fig:ICNN_winner}
    \includegraphics[width=0.45\textwidth]{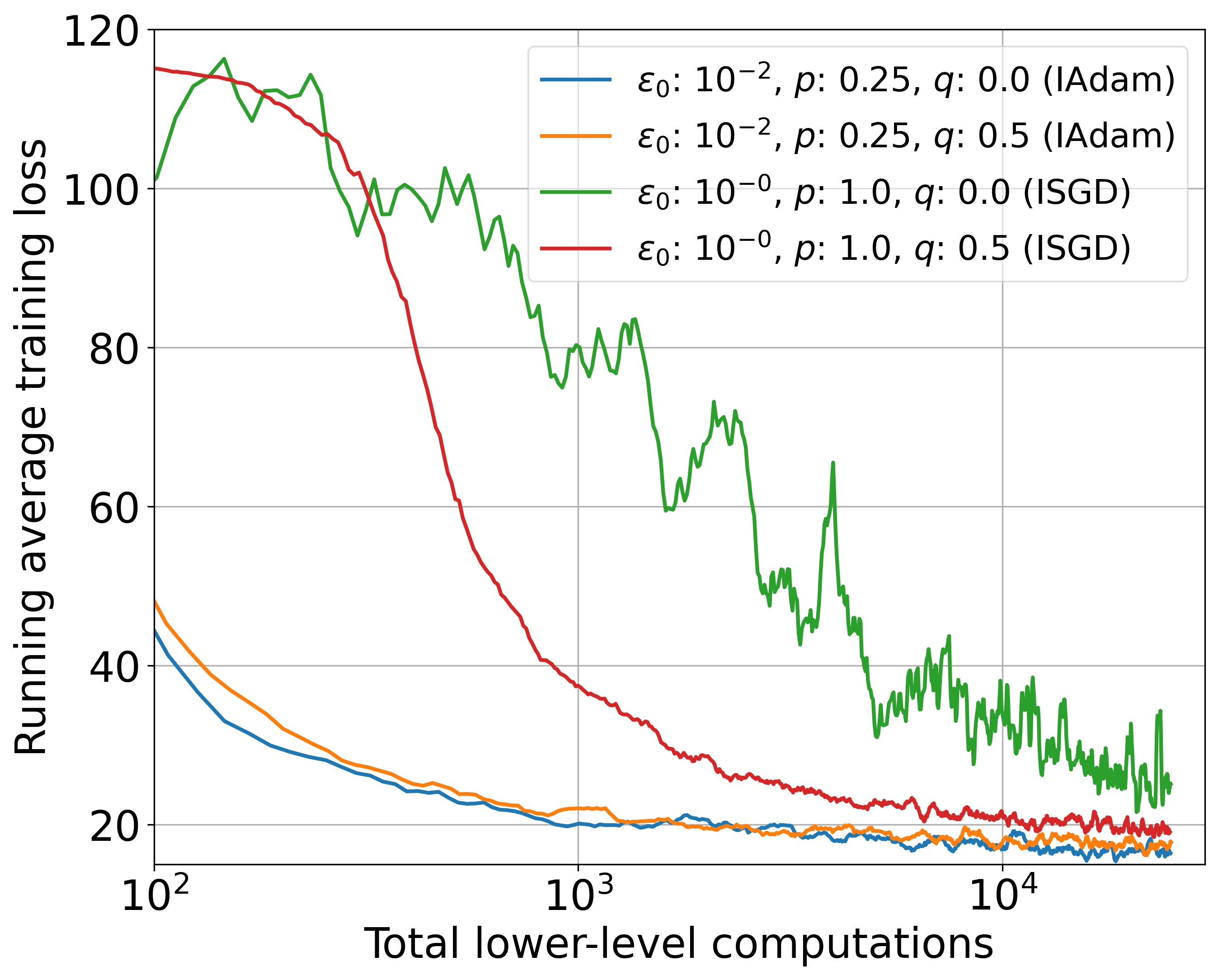}
}
\hfill
\subfloat[]{%
    \label{fig:ICNN_test}
    \includegraphics[width=0.45\textwidth]{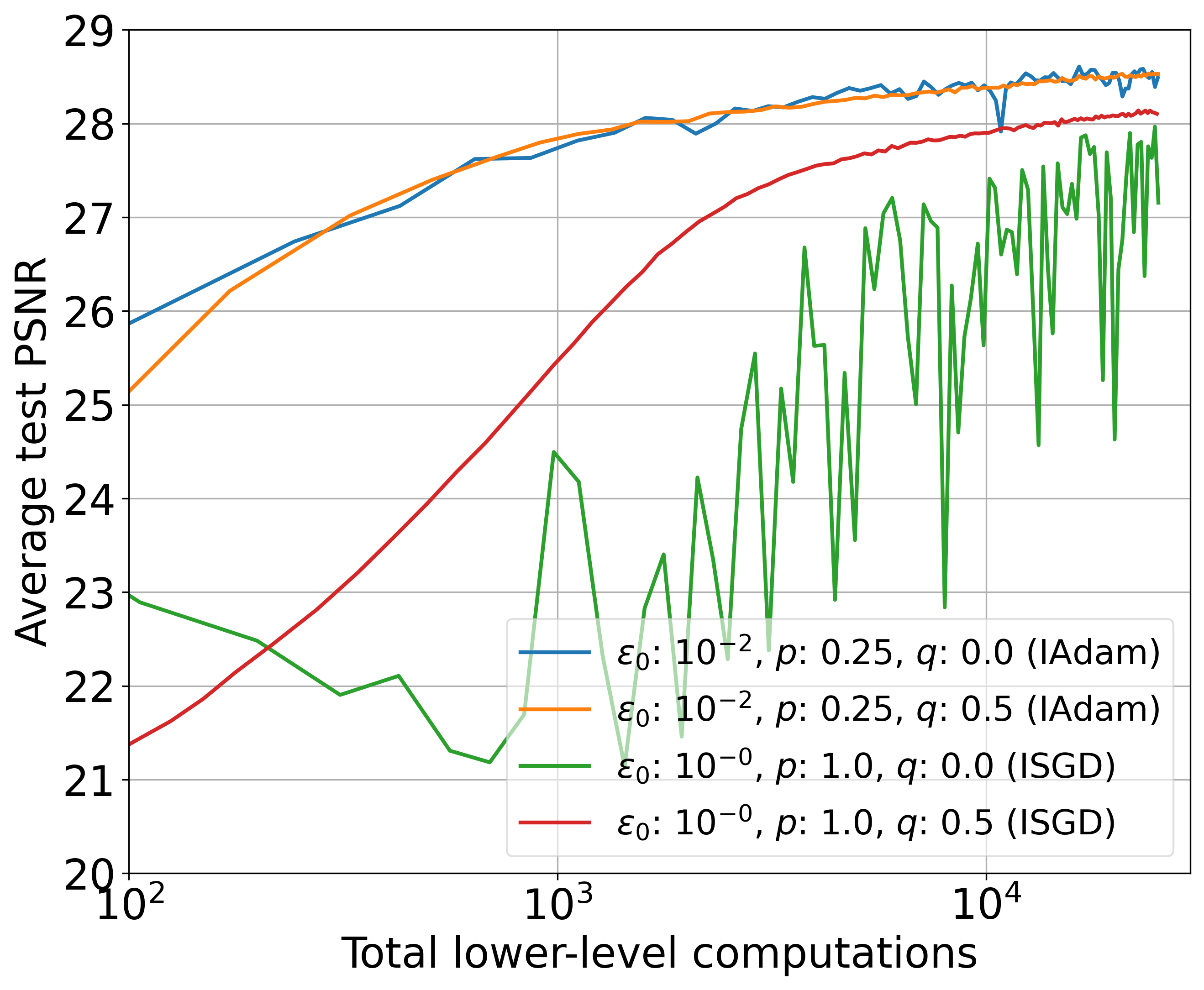}
}
\caption{Comparison between SGD and Adam with inexact stochastic hypergradients across different initial accuracies, using the most suitable values of accuracy schedule exponent $p$ and step size schedule exponent $q$ for each, found by trial and error. Top row: (a) shows configurations with fixed step size; (b) corresponds to decreasing step size. Bottom row: best-performing configurations of ISGD and IAdam in terms of training loss and test PSNR in (c) and (d), respectively.}
\label{fig:ICNN}
\end{figure}

\cref{fig:ICNN} compares ISGD with IAdam, where IAdam refers to the Adam algorithm \cite{kingma2015adam} with the inexact stochastic hypergradient replacing the exact stochastic hypergradient. The comparison spans low, medium, and high accuracies $\epsilon_0 \in \{1, 10^{-2}, 10^{-4}\}$ with an initial step size of $\alpha_0 = 10^{-2}$ and various decreasing accuracy and step size schedules.

Since ICNN parameters can vary significantly in magnitude, the optimization process may be affected by using the same step size across all of them. As shown in \cref{fig:ICNN}, preconditioning and diagonal rescaling in Adam provide clear advantages over SGD across all settings. \cref{fig:ICNN_fixed} further shows that, with a fixed step size, all IAdam variants converge to the same loss region after a computational cost of $10^4$. While the high-accuracy case $\epsilon_0 = 10^{-4}$ converges more slowly at first, the variants with $\epsilon_0 = 10^{-2}$ and $\epsilon_0 = 1$ exhibit similar behavior throughout. In contrast, ISGD variants with fixed step size display higher variance and perform worse than IAdam under the same computational budget. 

On the other hand, introducing a decreasing step size, while not significantly affecting IAdam’s performance, substantially stabilizes training and reduces the variance of ISGD variants, making them comparable to IAdam by the end of training, as shown in \cref{fig:ICNN_decrease}. Selecting the two best settings of both methods, illustrated in \cref{fig:ICNN_winner} and \cref{fig:ICNN_test}, highlights how a decreasing step size can stabilize and improve both training and testing performance for ISGD, bringing it closer to IAdam. The robustness of IAdam variants and their superiority in terms of both training loss and test PSNR motivate the exploration of preconditioning techniques in future work. 

It is worth noting that while a decreasing step size has only a minor effect on IAdam’s training performance, it stabilizes the test results, as shown in \cref{fig:ICNN_test}.
\begin{figure}[tbhp]
\centering

\begin{minipage}[t]{0.02\textwidth}
    \centering
    \rotatebox{90}{\footnotesize \hspace{-140pt} Noisy \hspace{50pt} GT}
\end{minipage}
\begin{minipage}[t]{0.20\textwidth}
    \centering
    \vspace{10pt}
    \includegraphics[width=0.85\textwidth]{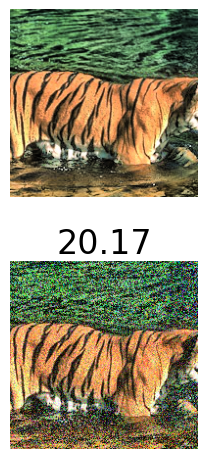}
\end{minipage}
\hfill
\begin{minipage}[t]{0.75\textwidth}
    \centering

    \begin{minipage}[b]{0.06\textwidth}
        \centering
        \rotatebox{90}{\tiny
        \parbox{2.5cm}{\centering
            $\alpha_0 = 10^{-2}, \epsilon_0 = 1$\\
            $p = 1, q = 0$\\
            ISGD
        }}
    \end{minipage}%
    \begin{minipage}[t]{0.92\textwidth}
        \includegraphics[width=\textwidth]{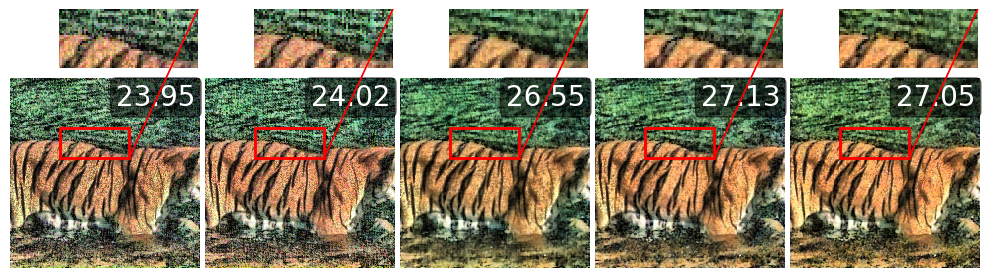}
    \end{minipage}
    \vspace{0.2em}

    \begin{minipage}[b]{0.06\textwidth}
        \centering
        \rotatebox{90}{\tiny
        \parbox{2.5cm}{\centering
            $\alpha_0 = 10^{-2}, \epsilon_0 = 1$\\
            $p = 1, q = 0.5$\\
            ISGD
        }}
    \end{minipage}%
    \begin{minipage}[t]{0.92\textwidth}
        \includegraphics[width=\textwidth]{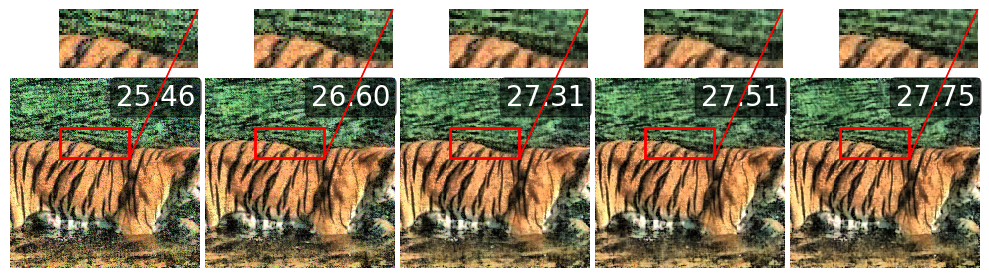}
    \end{minipage}
    \vspace{0.2em}

    \begin{minipage}[b]{0.06\textwidth}
        \centering
        \rotatebox{90}{\tiny
        \parbox{2.5cm}{\centering
            $\alpha_0 = 10^{-2}, \epsilon_0 = 10^{-2}$\\
            $p = 0.25, q = 0$\\
            IAdam
        }}
    \end{minipage}%
    \begin{minipage}[t]{0.92\textwidth}
        \includegraphics[width=\textwidth]{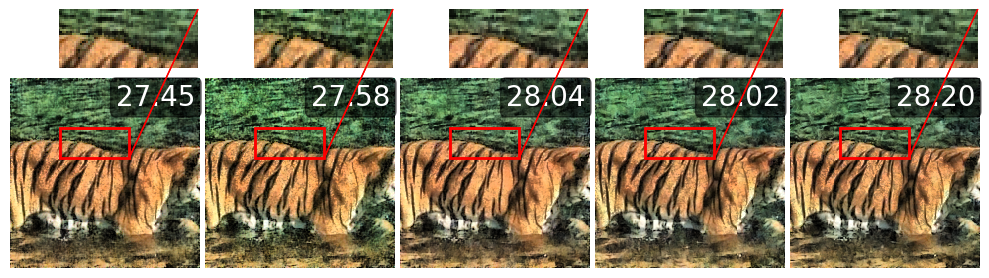}
    \end{minipage}
    \vspace{0.2em}

    \begin{minipage}[b]{0.06\textwidth}
        \centering
        \rotatebox{90}{\tiny
        \parbox{2.5cm}{\centering
            $\alpha_0 = 10^{-2}, \epsilon_0 = 1$\\
            $p = 1, q = 0.5$\\
            IAdam
        }}
    \end{minipage}%
    \begin{minipage}[t]{0.92\textwidth}
        \includegraphics[width=\textwidth]{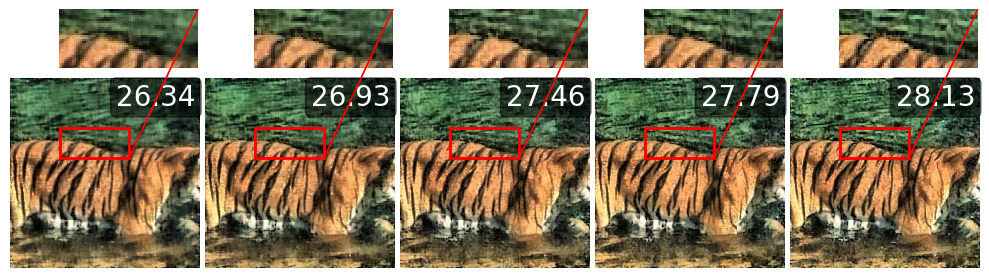}
    \end{minipage}

\end{minipage}

\caption{Denoising results for ICNN regularizer under various hyperparameter settings. First two rows corresponds to a best performed ISGD configuration, and the rest of the rows show the best top IAdam configuration.}
\label{fig:ICNN_ckp}
\end{figure}

Considering the training checkpoints in \cref{fig:ICNN_ckp}, the importance of preconditioning and the superior performance of IAdam under a limited computational budget become even more evident. The substantial gap of more than $2$\,dB PSNR in the early phase of training, compared with ISGD, highlights the role of rescaling the hypergradient in accelerating convergence.


\section{Conclusion and Future Work}
\label{sec:conclusions}
In this work, we advanced the theoretical and practical analysis of inexact stochastic bilevel optimization, establishing explicit convergence rates under different accuracy and step size schedules, and validating these findings on large-scale imaging problems. Our analysis shows that while decaying step sizes are necessary for asymptotic guarantees, accuracy scheduling plays a more decisive role in practice, and preconditioned adaptive methods such as Adam can substantially enhance performance. Experiments across denoising and inpainting tasks confirm that theory-informed schedules improve stability, but also highlight that fixed or more aggressive schedules often deliver superior results under realistic computational budgets.

These results narrow the gap between convergence theory and applied bilevel learning, but also reveal a persistent mismatch between asymptotic guarantees and finite-budget performance. Bridging this gap presents exciting directions for future work. In particular, developing non-asymptotic convergence theory that captures finite-time behavior, designing adaptive accuracy and step size schedules guided by data and budget constraints, and extending our analysis to nonconvex lower-level problems remain key challenges.

By combining theoretical guarantees with practical insights, we take a step toward making bilevel optimization a more reliable and scalable tool for modern machine learning and large-scale challenges in inverse problems.



\bibliographystyle{siamplain}
\bibliography{references}

\begin{thebibliography}{10}

\bibitem{amos2017input}
{\sc B.~Amos, L.~Xu, and J.~Z. Kolter}, {\em Input convex neural networks}, in Proceedings of the 34th International Conference on Machine Learning, D.~Precup and Y.~W. Teh, eds., vol.~70 of Proceedings of Machine Learning Research, PMLR, 06--11 Aug 2017, pp.~146--155, \url{https://proceedings.mlr.press/v70/amos17b.html}.

\bibitem{BSDS500}
{\sc P.~Arbelaez, M.~Maire, C.~Fowlkes, and J.~Malik}, {\em Contour detection and hierarchical image segmentation}, IEEE Trans. Pattern Anal. Mach. Intell., 33 (2011), pp.~898--916, \url{https://doi.org/10.1109/TPAMI.2010.161}.

\bibitem{piggyback_convergence}
{\sc L.~Bogensperger, A.~Chambolle, and T.~Pock}, {\em Convergence of a piggyback-style method for the differentiation of solutions of standard saddle-point problems}, SIAM Journal on Mathematics of Data Science, 4 (2022), pp.~1003--1030, \url{https://doi.org/10.1137/21M1455887}.

\bibitem{bogensperger_adaptively_2025}
{\sc L.~Bogensperger, M.~J. Ehrhardt, T.~Pock, M.~S. Salehi, and H.~S. Wong}, {\em An {Adaptively} {Inexact} {Method} for {Bilevel} {Learning} {Using} {Primal}–{Dual}-{Style} {Differentiation}}, Journal of Mathematical Imaging and Vision, 67 (2025), p.~49, \url{https://doi.org/10.1007/s10851-025-01262-w}.

\bibitem{Piggy_nonsmooth_bolte}
{\sc J.~Bolte, E.~Pauwels, and S.~Vaiter}, {\em Automatic differentiation of nonsmooth iterative algorithms}, in Advances in Neural Information Processing Systems, S.~Koyejo, S.~Mohamed, A.~Agarwal, D.~Belgrave, K.~Cho, and A.~Oh, eds., vol.~35, Curran Associates, Inc., 2022, pp.~26404--26417.

\bibitem{BolPauVai2023}
{\sc J.~Bolte, E.~Pauwels, and S.~Vaiter}, {\em One-step differentiation of iterative algorithms}, in Thirty-seventh Conference on Neural Information Processing Systems, 2023, \url{https://openreview.net/forum?id=81snFfE3vR}.

\bibitem{DEQ_CT_Andrea}
{\sc T.~A. Bubba, M.~Santacesaria, and A.~Sebastiani}, {\em Tomoselfdeq: Self-supervised deep equilibrium learning for sparse-angle ct reconstruction}, in Scale Space and Variational Methods in Computer Vision, T.~A. Bubba, R.~Gaburro, S.~Gazzola, K.~Papafitsoros, M.~Pereyra, and C.-B. Sch{\"o}nlieb, eds., Cham, 2025, Springer Nature Switzerland, pp.~334--346.

\bibitem{FoE_Pock}
{\sc Y.~Chen, R.~Ranftl, and T.~Pock}, {\em Insights into analysis operator learning: From patch-based sparse models to higher order {MRFs}}, {IEEE} Transactions on Image Processing, 23 (2014), pp.~1060--1072.

\bibitem{daniele2025deepequilibriummodelspoisson}
{\sc C.~Daniele, S.~Villa, S.~Vaiter, and L.~Calatroni}, {\em Deep equilibrium models for poisson imaging inverse problems via mirror descent}, 2025, \url{https://arxiv.org/abs/2507.11461}.

\bibitem{davy2025DEQ}
{\sc L.~Davy, L.~M. Briceno-Arias, and N.~Pustelnik}, {\em Restarted contractive operators to learn at equilibrium}, 2025, \url{https://arxiv.org/abs/2506.13239}.

\bibitem{biasedSGD}
{\sc Y.~Demidovich, G.~Malinovsky, I.~Sokolov, and P.~Richt{\'a}rik}, {\em A guide through the zoo of biased {SGD}}, in Thirty-seventh Conference on Neural Information Processing Systems, 2023, \url{https://openreview.net/forum?id=OCtv4NyahI}.

\bibitem{dizon2025}
{\sc N.~Dizon and T.~Valkonen}, {\em Differential estimates for fast first-order multilevel nonconvex optimisation}, 2025, \url{https://arxiv.org/abs/2412.01481}.

\bibitem{Downing_2024}
{\sc S.~Downing, S.~Gazzola, I.~G. Graham, and E.~A. Spence}, {\em Optimising seismic imaging design parameters via bilevel learning}, Inverse Problems, 40 (2024), p.~115008, \url{https://doi.org/10.1088/1361-6420/ad797a}.

\bibitem{ehrhardt2024efficientgradientbasedmethodsbilevel}
{\sc M.~J. Ehrhardt, S.~Gazzola, and S.~J. Scott}, {\em Efficient gradient-based methods for bilevel learning via recycling krylov subspaces}, 2024, \url{https://arxiv.org/abs/2412.08264}.

\bibitem{DFO}
{\sc M.~J. Ehrhardt and L.~Roberts}, {\em Inexact derivative-free optimization for bilevel learning}, J. Math. Imaging Vis., 63 (2021), pp.~580--600.

\bibitem{boundMatthias}
{\sc M.~J. Ehrhardt and L.~Roberts}, {\em Analyzing inexact hypergradients for bilevel learning}, IMA Journal of Applied Mathematics, 89 (2023), pp.~254--278, \url{https://doi.org/10.1093/imamat/hxad035}.

\bibitem{hamid}
{\sc H.~Fathi, A.~Skorikov, and T.~van Leeuwen}, {\em Bi-level optimization and implicit differentiation as a framework for optimal experimental design in tomography}, in Scale Space and Variational Methods in Computer Vision: 10th International Conference, SSVM 2025, Dartington, UK, May 18–22, 2025, Proceedings, Part II, Berlin, Heidelberg, 2025, Springer-Verlag, p.~123–135, \url{https://doi.org/10.1007/978-3-031-92369-2_10}.

\bibitem{unrolledPaper}
{\sc L.~Franceschi, M.~Donini, P.~Frasconi, and M.~Pontil}, {\em Forward and reverse gradient-based hyperparameter optimization}, in Proceedings of the 34th International Conference on Machine Learning, D.~Precup and Y.~W. Teh, eds., vol.~70 of Proceedings of Machine Learning Research, PMLR, 06--11 Aug 2017, pp.~1165--1173.

\bibitem{JFB_2022}
{\sc S.~W. Fung, H.~Heaton, Q.~Li, D.~Mckenzie, S.~Osher, and W.~Yin}, {\em Jfb: Jacobian-free backpropagation for implicit networks}, Proceedings of the AAAI Conference on Artificial Intelligence, 36 (2022), pp.~6648--6656, \url{https://doi.org/10.1609/aaai.v36i6.20619}.

\bibitem{Ghadimi_SGD}
{\sc S.~Ghadimi and G.~Lan}, {\em Stochastic first- and zeroth-order methods for nonconvex stochastic programming}, SIAM Journal on Optimization, 23 (2013), pp.~2341--2368, \url{https://doi.org/10.1137/120880811}.

\bibitem{goujon2022CRR}
{\sc A.~Goujon, S.~Neumayer, P.~Bohra, S.~Ducotterd, and M.~Unser}, {\em A neural-network-based convex regularizer for inverse problems}, IEEE Trans. Comput. Imaging, 9 (2023), pp.~781--795, \url{https://doi.org/10.1109/TCI.2023.3306100}.

\bibitem{goujon2023WCRR}
{\sc A.~Goujon, S.~Neumayer, and M.~Unser}, {\em Learning weakly convex regularizers for convergent image-reconstruction algorithms}, SIAM J. Imaging Sci., 17 (2024), pp.~91--115.

\bibitem{gower2019sgd}
{\sc R.~M. Gower, N.~Loizou, X.~Qian, A.~Sailanbayev, E.~Shulgin, and P.~Richt{\'a}rik}, {\em {SGD}: General analysis and improved rates}, in Proceedings of the 36th International Conference on Machine Learning, K.~Chaudhuri and R.~Salakhutdinov, eds., vol.~97 of Proceedings of Machine Learning Research, PMLR, 09--15 Jun 2019, pp.~5200--5209, \url{https://proceedings.mlr.press/v97/qian19b.html}.

\bibitem{grazzi20a}
{\sc R.~Grazzi, L.~Franceschi, M.~Pontil, and S.~Salzo}, {\em On the iteration complexity of hypergradient computation}, in Proc. 37th ICML, vol.~119, 2020, pp.~3748--3758.

\bibitem{nonsmooth_bilevel}
{\sc R.~Grazzi, M.~Pontil, and S.~Salzo}, {\em Nonsmooth implicit differentiation: deterministic and stochastic convergence rates}, in Proceedings of the 41st International Conference on Machine Learning, ICML'24, JMLR.org, 2024.

\bibitem{hertrich2025}
{\sc J.~Hertrich, H.~S. Wong, A.~Denker, S.~Ducotterd, Z.~Fang, M.~Haltmeier, Željko Kereta, E.~Kobler, O.~Leong, M.~S. Salehi, C.-B. Schönlieb, J.~Schwab, Z.~Shumaylov, J.~Sulam, G.~S. Wache, M.~Zach, Y.~Zhang, M.~J. Ehrhardt, and S.~Neumayer}, {\em Learning regularization functionals for inverse problems: A comparative study}, 2025, \url{https://arxiv.org/abs/2510.01755}.

\bibitem{StochasticFrankWolfe}
{\sc J.~Hou, X.~Zeng, S.~Cui, X.~Jiang, and J.~Sun}, {\em Stochastic frank-wolfe algorithm for constrained bilevel optimization with improved per-iteration complexity}, IEEE Transactions on Signal Processing, 73 (2025), pp.~3237--3252, \url{https://doi.org/10.1109/TSP.2025.3596231}.

\bibitem{SGD_better}
{\sc A.~Khaled and P.~Richt{\'a}rik}, {\em Better theory for {SGD} in the nonconvex world}, Trans. Mach. Learn. Res.,  (2023).

\bibitem{Stochastic_khanduri}
{\sc P.~Khanduri, S.~Zeng, M.~Hong, H.-T. Wai, Z.~Wang, and Z.~Yang}, {\em A near-optimal algorithm for stochastic bilevel optimization via double-momentum}, in Proceedings of the 35th International Conference on Neural Information Processing Systems, NIPS '21, Red Hook, NY, USA, 2021, Curran Associates Inc.

\bibitem{kingma2015adam}
{\sc D.~P. Kingma and J.~Ba}, {\em Adam: A method for stochastic optimization}, International Conference on Learning Representations (ICLR),  (2015).

\bibitem{nmAPG}
{\sc H.~Li and Z.~Lin}, {\em Accelerated proximal gradient methods for nonconvex programming}, in Advances in Neural Information Processing Systems, C.~Cortes, N.~Lawrence, D.~Lee, M.~Sugiyama, and R.~Garnett, eds., vol.~28, Curran Associates, Inc., 2015, \url{https://proceedings.neurips.cc/paper_files/paper/2015/file/f7664060cc52bc6f3d620bcedc94a4b6-Paper.pdf}.

\bibitem{maclaurin2015gradientbased}
{\sc D.~Maclaurin, D.~Duvenaud, and R.~P. Adams}, {\em Gradient-based hyperparameter optimization through reversible learning}, 2015, \url{https://arxiv.org/abs/1502.03492}.

\bibitem{AD}
{\sc S.~Mehmood and P.~Ochs}, {\em Automatic differentiation of some first-order methods in parametric optimization}, in Proceedings of the Twenty Third International Conference on Artificial Intelligence and Statistics, S.~Chiappa and R.~Calandra, eds., vol.~108 of Proceedings of Machine Learning Research, PMLR, 26--28 Aug 2020, pp.~1584--1594.

\bibitem{InputConvex}
{\sc S.~Mukherjee, S.~Dittmer, Z.~Shumaylov, S.~Lunz, O.~Öktem, and C.-B. Schönlieb}, {\em Data-driven convex regularizers for inverse problems}, in ICASSP 2024 - 2024 IEEE International Conference on Acoustics, Speech and Signal Processing (ICASSP), 2024, pp.~13386--13390, \url{https://doi.org/10.1109/ICASSP48485.2024.10447719}.

\bibitem{Unrolling_Ochs}
{\sc P.~Ochs, R.~Ranftl, T.~Brox, and T.~Pock}, {\em Techniques for {Gradient}-{Based} {Bilevel} {Optimization} with {Non}-smooth {Lower} {Level} {Problems}}, Journal of Mathematical Imaging and Vision, 56 (2016), pp.~175--194, \url{https://doi.org/10.1007/s10851-016-0663-7}.

\bibitem{Ochs}
{\sc P.~Ochs, R.~Ranftl, T.~Brox, and T.~Pock}, {\em Techniques for gradient based bilevel optimization with nonsmooth lower level problems}, 2016.

\bibitem{Pedregosa}
{\sc F.~Pedregosa}, {\em Hyperparameter optimization with approximate gradient}, in Proceedings of The 33rd International Conference on Machine Learning, M.~F. Balcan and K.~Q. Weinberger, eds., vol.~48 of Proceedings of Machine Learning Research, New York, New York, USA, 20--22 Jun 2016, PMLR, pp.~737--746, \url{https://proceedings.mlr.press/v48/pedregosa16.html}.

\bibitem{lorraine2018stochastic}
{\sc A.~Raghu, J.~Lorraine, S.~Kornblith, M.~McDermott, and D.~Duvenaud}, {\em Meta-learning to improve pre-training}, in Proceedings of the 35th International Conference on Neural Information Processing Systems, NIPS '21, Red Hook, NY, USA, 2021, Curran Associates Inc.

\bibitem{ramzi2023shine}
{\sc Z.~Ramzi, F.~Mannel, S.~Bai, J.-L. Starck, P.~Ciuciu, and T.~Moreau}, {\em Shine: Sharing the inverse estimate from the forward pass for bi-level optimization and implicit models}, 2023, \url{https://arxiv.org/abs/2106.00553}.

\bibitem{robbins1951stochastic}
{\sc H.~Robbins and S.~Monro}, {\em A stochastic approximation method}, The Annals of Mathematical Statistics, 22 (1951), pp.~400--407, \url{https://doi.org/10.1214/aoms/1177729586}.

\bibitem{ROBERTS_SGD}
{\sc L.~Roberts and E.~Smyth}, {\em A simplified convergence theory for byzantine resilient stochastic gradient descent}, EURO Journal on Computational Optimization, 10 (2022), p.~100038.

\bibitem{RobinsonStephenM.1980SRGE}
{\sc S.~M. Robinson}, {\em Strongly regular generalized equations.}, Mathematics of Operations Research, 5 (1980), pp.~43--63.

\bibitem{FoE}
{\sc S.~Roth and M.~J. Black}, {\em Fields of experts}, IJCV, 82 (2009), pp.~205--229.

\bibitem{FastADP}
{\sc M.~S. Salehi, T.~A. Bubba, and Y.~Korolev}, {\em Fast inexact bilevel optimization for analytical deep image priors}, in Scale Space and Variational Methods in Computer Vision, T.~A. Bubba, R.~Gaburro, S.~Gazzola, K.~Papafitsoros, M.~Pereyra, and C.-B. Sch{\"o}nlieb, eds., Cham, 2025, Springer Nature Switzerland, pp.~30--42.

\bibitem{MAID}
{\sc M.~S. Salehi, S.~Mukherjee, L.~Roberts, and M.~J. Ehrhardt}, {\em An adaptively inexact first-order method for bilevel optimization with application to hyperparameter learning}, SIAM Journal on Mathematics of Data Science, 7 (2025), pp.~906--936, \url{https://doi.org/10.1137/24M1653513}.

\bibitem{salehi2024inexactstochastic}
{\sc M.~S. Salehi, S.~Mukherjee, L.~Roberts, and M.~J. Ehrhardt}, {\em Bilevel learning with inexact stochastic gradients}, in Scale Space and Variational Methods in Computer Vision, T.~A. Bubba, R.~Gaburro, S.~Gazzola, K.~Papafitsoros, M.~Pereyra, and C.-B. Sch{\"o}nlieb, eds., Cham, 2025, Springer Nature Switzerland, pp.~347--359.

\bibitem{ShaCheHat2019}
{\sc A.~Shaban, C.-A. Cheng, N.~Hatch, and B.~Boots}, {\em Truncated back-propagation for bilevel optimization}, in Proceedings of the Twenty-Second International Conference on Artificial Intelligence and Statistics, K.~Chaudhuri and M.~Sugiyama, eds., vol.~89 of Proceedings of Machine Learning Research, PMLR, 16--18 Apr 2019, pp.~1723--1732, \url{https://proceedings.mlr.press/v89/shaban19a.html}.

\bibitem{Sherry}
{\sc F.~Sherry, M.~Benning, J.~C. De~los Reyes, M.~J. Graves, G.~Maierhofer, G.~Williams, C.-B. Sch{\"o}nlieb, and M.~J. Ehrhardt}, {\em Learning the sampling pattern for mri}, IEEE Transactions on Medical Imaging, 39 (2020), pp.~4310--4321.

\bibitem{staudigl2024derivativefree}
{\sc M.~Staudigl, S.~Weissmann, and T.~van Leeuwen}, {\em Derivative-free stochastic bilevel optimization for inverse problems}, 2024, \url{https://arxiv.org/abs/2411.18100}.

\bibitem{suonpera_linearly_2024}
{\sc E.~Suonperä and T.~Valkonen}, {\em Linearly convergent bilevel optimization with single-step inner methods}, Computational Optimization and Applications, 87 (2024), pp.~571--610, \url{https://doi.org/10.1007/s10589-023-00527-7}.

\bibitem{Valkonen_single}
{\sc E.~Suonperä and T.~Valkonen}, {\em Single-loop methods for bilevel parameter learning in inverse imaging}, Journal of Nonsmooth Analysis and Optimization, Volume 5 (2025), 5, \url{https://doi.org/10.46298/jnsao-2025-15577}.

\bibitem{VicMetSoh2022}
{\sc P.~Vicol, L.~Metz, and J.~Sohl-Dickstein}, {\em Unbiased gradient estimation in unrolled computation graphs with persistent evolution strategies}, in International Conference on Machine Learning, PMLR, 2021, pp.~10553--10563.

\end{thebibliography}

\end{document}